\theoremstyle{plain}
	\newtheorem{theorem}{Theorem}[section]
	\newtheorem{proposition}[theorem]{Proposition}
	\newtheorem{lemma}[theorem]{Lemma}
	\newtheorem{corollary}[theorem]{Corollary}
\theoremstyle{definition}
	\newtheorem{definition}[theorem]{Definition}
	\newtheorem{remark}[theorem]{Remark}
\newcommand{\N}{\mathbb{N}}
\newcommand{\R}{\mathbb{R}}	
\newcommand{\C}{\mathbb{C}}	
\newcommand{\He}{\mathbb{He}}	
\newcommand{\W}{\mathbb{W}}	
\newcommand{\eps}{\varepsilon}
\renewcommand{\phi}{\varphi}
\newcommand{\leb}{\mathcal{L}}
\newcommand{\haush}{\mathcal{S}}
\newcommand{\de}{\partial}
\newcommand{\hgt}{\text{\large\Fontamici h}}
\renewcommand{\Im}{\mathrm{Im}}
\DeclareMathOperator{\diverg}{div}
\DeclareMathOperator{\diam}{diam}
\DeclareMathOperator{\Lip}{Lip}
\DeclareMathOperator{\gr}{gr}
\DeclareMathOperator{\spann}{span}
\def\Xint#1{\mathchoice
{\XXint\displaystyle\textstyle{#1}}%
{\XXint\textstyle\scriptstyle{#1}}%
{\XXint\scriptstyle\scriptscriptstyle{#1}}%
{\XXint\scriptscriptstyle\scriptscriptstyle{#1}}%
\!\int}
\def\XXint#1#2#3{{\setbox0=\hbox{$#1{#2#3}{\int}$ }
\vcenter{\hbox{$#2#3$ }}\kern-.6\wd0}}
\def\aint{\Xint-}
\mathchardef\ordinarycolon\mathcode`\:
\numberwithin{equation}{section}
\begin{document}

\title[Improved Lipschitz Approximation]
{Improved Lipschitz Approximation of $H$-perimeter minimizing boundaries}

\author[R. Monti]{Roberto Monti}
\address{Dipartimento di Matematica, Università degli Studi di Padova, Via Trieste 63, 35121 Padova, Italy}
\email{monti@math.unipd.it} 

\author[G. Stefani]{Giorgio Stefani}
\address{Scuola Normale Superiore, Piazza Cavalieri 7, 56126 Pisa, Italy}
\email{giorgio.stefani@sns.it}

\date{\today}

\keywords{Heisenberg group, regularity of $H$-minimal surfaces, 
Lipschitz approximation}

\subjclass[2010]{49Q05, 53C17, 28A75}

\begin{abstract}
We prove two new approximation results of $H$-perimeter 
minimizing boundaries by means of intrinsic Lipschitz functions in the setting
of the Heisenberg group $\He^n$ with $n\ge2$. The first one is an improvement
of~\cite{monti14} and is the natural reformulation in $\He^n$ of the classical
Lipschitz approximation in~$\R^n$. The second one is an adaptation of the
approximation via maximal function developed by De Lellis and
Spadaro,~\cite{delellis&spadaro11-1}.   
\end{abstract}

\maketitle

\section{Introduction}

The study of Geometric Measure Theory in the Heisenberg group $\He^n$ started from the pioneering work~\cite{franchietal01}
and the 
regularity of sets that are minimizers for the horizontal perimeter is one of the most important 
open problems in the field. 
The  known regularity results assume some strong \textit{a~priori} regularity
and/or some restrictive geometric structure of the minimizer, 
see~\cites{capognaetal09,capognaetal10,cheng&hwang09,serracassano&vittone14,
monti15}.  On the other hand, examples of minimal surfaces in the first
Heisenberg group~$\He^1$ that are only Lipschitz continuous in the Euclidean
sense have been constructed, see, e.g.,~\cites{pauls06,ritore09}, but no similar
examples of non-smooth minimizers are known in~$\He^n$ with $n\ge2$.

The most natural approach to a regularity theory for $H$-perimeter minimizing sets in the Heisenberg group $\He^n$ is to adapt the classical De Giorgi's regularity theory for perimeter minimizers in $\R^n$.
His ideas have been recently  
improved and generalized by several authors, see
the recent monograph~\cite{maggi12}. In particular, one of the most important
achievements is Almgren's regularity theory of area minimizing integral currents
in $\R^n$ of general codimension,~\cite{almgren00}.  
 For a survey on Almgren's theory and on the long term
program undertaken by
De Lellis and Spadaro to make Almgren's work more readable and exploitable for a
larger community of specialists, we refer to~\cite{ambrosio15} and to the
references therein. For the recent extension of the theory to infinite
dimensional spaces, see~\cite{ambrosioetal15}.

This paper deals with the first step of the regularity theory, namely, 
the Lipschitz approximation.
In fact, in De Giorgi's original approach the approximation is made by 
convolution and the estimates are based on a monotonicity formula. In the
Heisenberg group, however, the validity of a monotonicity formula is not  
clear, see~\cite{daniellietal10}. 
A more flexible approach is the approximation of minimizing boundaries  by means
of Lipschitz graphs, see~\cite{schoen&simon82}. 
 Although the boundary of sets with finite $H$-perimeter is not rectifiable
in
the standard sense and, in fact, may have fractional Hausdorff
dimension,~\cite{kirchheim&serracassano04}, the  notion of
\textit{intrinsic Lipschitz graph} in the sense of~\cite{franchietal06} turns
out to be effective in the approximation, as shown in \cite{monti14}. 

 Here, we prove two new intrinsic Lipschitz approximation 
theorems for $H$-perimeter minimizers in the setting of the Heisenberg
group~$\He^n$ with $n\ge2$.

The first result is an improvement of \cite{monti14} and is the natural reformulation in $\He^n$ of the classical Lipschitz approximation in $\R^n$, see~\cite{maggi12}*{Theorem~23.7}. 
Let  
$\W=\R\times\He^{n-1}$ be the hyperplane  passing through the origin and
orthogonal to the direction~$\nu=-X_1$.
The disk $D_r\subset\W$ centered at the origin is defined using the natural  box norm of~$\He^n$ and the cylinder $C_r(p)$, $p\in\He^n$, is defined as $C_r(p)=p*C_r$, where $C_r=D_r*(-r,r)$.
 We denote by   $\mathbf{e}(E,C_r(p),\nu)$ the  excess of $E$ in 
$C_r(p)$ with respect to the fixed direction $\nu$, that is, the $L^2$-averaged 
oscillation of $\nu_E$, the inner horizontal unit normal to $E$, from the
direction $\nu$ in the cylinder. 
The $2n+1$-dimensional spherical Hausdorff measure $\haush^{2n+1}$  is defined
by the natural distance of~$\He^n$.
Finally, $\nabla^\phi\phi$ is the intrinsic gradient of $\phi$.  We refer the
reader to  Section~\ref{sec:1} for precise definitions.

\begin{theorem}\label{th_intro:lip_app} 
Let $n\geq 2$. There exist positive dimensional constants $C_1(n)$, $\eps_1(n)$
and $\delta_1(n)$ with the following property. If $E\subset\He^n$ is an
$H$-perimeter minimizer in the cylinder $C_{5124}$ with $0\in \partial E$ and
$\mathbf{e}(E,C_{5124},\nu)\le\eps_1(n)$ then, letting
\begin{equation*}
M=C_1\cap\de E, \qquad M_0=\big\{q\in M : \sup_{0<s<256}\mathbf{e}(E,C_s(q),\nu)\le\delta_1(n) \big\}, 
\end{equation*}
there exists an intrinsic Lipschitz function $\phi\colon\W\to\R$ such that
\begin{equation*}
\sup_\W|\phi|\le C_1(n)\,\mathbf{e}(E,C_{5124},\nu)^{\frac{1}{2(2n+1)}},\quad \Lip_H(\phi)\le1, 
\end{equation*}
\begin{equation*}
M_0\subset M\cap\Gamma, \qquad \Gamma=\gr(\phi|_{D_1}),
\end{equation*}
\begin{align*}
\haush^{2n+1}(M\bigtriangleup\Gamma)&\le C_1(n)\,\mathbf{e}(E,C_{5124},\nu),\\
\int_{D_1}|\nabla^\phi\phi|^2\ d\leb^{2n}&\le C_1(n)\,\mathbf{e}(E,C_{5124},\nu).
\end{align*}
\end{theorem}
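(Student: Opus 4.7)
The plan is to mimic the Euclidean proof of Maggi's Theorem 23.7, replacing the Euclidean cone monotonicity and graph construction with their intrinsic Heisenberg counterparts from~\cite{monti14} and~\cite{franchietal06}. Two core ingredients should already be available in the paper's earlier sections: a \emph{height bound}, giving $L^\infty$-control of $\de E \cap C_r$ in the $\nu$-direction by $\mathbf{e}(E,C_{Ar},\nu)^{1/(2(2n+1))}$ whenever the excess is small, and the standard \emph{compactness/upper-semicontinuity} properties of excess under blow-ups. The large radius $5124$ in the hypothesis reflects the successive enlargements needed when chaining a height bound, a Vitali covering of the bad set, and an excess-on-enlarged-cylinders argument.

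First, I would analyse the \emph{good set} $M_0$. Using the smallness of $\mathbf{e}(E,C_s(q),\nu)$ for every $q\in M_0$ and every $0<s<256$, together with the height bound, I would prove an intrinsic cone-exclusion property: there exist $\delta_1(n),\eps_1(n)$ such that if $q_1,q_2\in M_0$ then $q_2\notin q_1 \ast \mathcal{C}$, where $\mathcal{C}$ is an intrinsic Heisenberg cone of aperture $1$ around the $\nu$-axis. This is the Heisenberg analogue of ``two points of small-excess boundary cannot be vertically aligned,'' and is exactly the statement that $M_0$ is contained in the graph of a $1$-Lipschitz intrinsic function defined on $\pi(M_0)\subset D_1$, where $\pi\colon \He^n\to\W$ is the horizontal projection. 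An intrinsic McShane-type extension (see \cite{franchietal06}) then produces the desired $\phi\colon\W\to\R$ with $\Lip_H(\phi)\le1$, and the height bound applied on each small cylinder gives the $\sup_{\W}|\phi|$ estimate with the fractional power $1/(2(2n+1))$.

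Next, I would control the measure of the \emph{bad set} $M\setminus M_0$. By definition, each $q\in M\setminus M_0$ admits some radius $s_q\in(0,256)$ with $\mathbf{e}(E,C_{s_q}(q),\nu)>\delta_1(n)$. A Vitali $5r$-covering in the natural distance of $\He^n$ produces a disjoint subfamily $\{C_{s_k}(q_k)\}$ whose $5$-enlargement covers $M\setminus M_0$; each cylinder contributes at least $\delta_1\, s_k^{2n+1}$ to the excess, so
\begin{equation*}
\haush^{2n+1}(M\setminus M_0)\le C(n)\sum_k s_k^{2n+1}\le \frac{C(n)}{\delta_1(n)}\,\mathbf{e}(E,C_{5124},\nu),
\end{equation*}
where one uses additivity of excess on the disjoint subcylinders, all contained in $C_{5124}$ (this is what fixes the numerical radius). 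The symmetric difference estimate $\haush^{2n+1}(M\triangle\Gamma)\le C_1\,\mathbf{e}$ then follows because $M_0\subset M\cap\Gamma$ and $\Gamma\setminus M$ can be bounded by the same covering argument applied to $\Gamma$, using the intrinsic Lipschitz area formula to turn horizontal area on $\Gamma$ into $\leb^{2n}$-measure on~$\W$.

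Finally, for the Dirichlet-type estimate on $\nabla^\phi\phi$, I would decompose $D_1=\pi(M_0)\sqcup(D_1\setminus\pi(M_0))$. On $\pi(M_0)$, the graph of $\phi$ coincides with~$\de E$, so $\nu_E$ and the intrinsic normal to $\Gamma$ agree $\haush^{2n+1}$-a.e., and $|\nabla^\phi\phi|^2$ is comparable to $1-\langle\nu_E,\nu\rangle$ up to terms of order $|\nabla^\phi\phi|^4$ (a Taylor expansion valid because $\Lip_H(\phi)\le1$), so integration yields a bound by $\mathbf{e}(E,C_{5124},\nu)$ via the area formula. On $D_1\setminus\pi(M_0)$, the bound $|\nabla^\phi\phi|\le \Lip_H(\phi)\le1$ combined with the measure estimate $\leb^{2n}(D_1\setminus\pi(M_0))\le C\,\haush^{2n+1}(M\setminus M_0)\le C_1\,\mathbf{e}$ finishes the proof. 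The main obstacle I expect is the first step—proving the intrinsic cone exclusion for points of $M_0$—because it requires a quantitative version of the height bound that is delicate in $\He^n$, and it is here that the restriction $n\ge2$ enters through the availability of the sharper height estimate from \cite{monti14}.
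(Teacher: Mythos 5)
Your outline follows the paper's own three-step scheme quite closely: the construction of $\phi$ via blow-up at scale $d_C(p,q)$ and the height bound (which is \cite{monti&vittone15}*{Theorem~1.3}, not a result of \cite{monti14}) to get $|\hgt(q^{-1}*p)|\le L(n)\,\|\pi(q^{-1}*p)\|_\infty$ on $M_0$, then the intrinsic Lipschitz extension; the Vitali $5r$-covering of $M\setminus M_0$, where your inequality $\haush^{2n+1}(M\setminus M_0)\le C(n)\sum_k s_k^{2n+1}$ tacitly uses the density estimates of \cite{monti&vittone15}*{Theorem~4.1}; and the area formula plus comparison of normals for the Dirichlet bound.

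There is, however, a genuine gap in how you handle the parts of the estimates that do not live on $M$. The bound for $\haush^{2n+1}(\Gamma\setminus M)$ cannot be obtained by ``the same covering argument applied to $\Gamma$'': points of $\Gamma\setminus M$ are not boundary points of $E$, so no excess-concentration criterion applies to them, and the area formula only converts $\haush^{2n+1}(\Gamma\setminus M)$ into $\leb^{2n}(\pi(\Gamma\setminus M))$ without relating it to the excess. The same issue appears in your Dirichlet estimate, where the inequality $\leb^{2n}(D_1\setminus\pi(M_0))\le C\,\haush^{2n+1}(M\setminus M_0)$ is asserted but not justified: it presupposes that $\leb^{2n}$-a.e.\ point of $D_1$ lies below a point of $M$ and that $\pi$ does not increase measure on $M$. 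The missing ingredient, which the paper takes from \cite{monti&vittone15}*{Lemma~3.4} and which requires the smallness of the excess, is the projection (separation/slicing) estimate $\leb^{2n}(G)\le\haush^{2n+1}\big(M\cap\pi^{-1}(G)\big)$ for Borel $G\subset D_1$; combined with the elementary inclusion $M\cap\pi^{-1}\big(\pi(\Gamma\setminus M)\big)\subset M\setminus\Gamma$ and the $L^\infty$ bound on $\nabla^\phi\phi$, it gives $\haush^{2n+1}(\Gamma\setminus M)\le C(n)\,\haush^{2n+1}(M\setminus\Gamma)$ and also closes your $D_1\setminus\pi(M_0)$ step. Two minor corrections: $|\nabla^\phi\phi|\le\Lip_H(\phi)\le1$ is not true as stated, the correct bound being $\|\nabla^\phi\phi\|_{L^\infty}\le C(n)\Lip_H(\phi)(1+\Lip_H(\phi))$ by \cite{cittietal14}*{Proposition~4.4}, which suffices; and no Taylor expansion is needed on the good set, since on $M\cap\Gamma$ one has exactly $1-\langle\nu_E,\nu\rangle_g\ge\tfrac12\big(1-\langle\nu_E,\nu\rangle_g^2\big)=\tfrac{|\nabla^\phi\phi|^2}{2(1+|\nabla^\phi\phi|^2)}$, which integrates against $\mu_E$ via the area formula.
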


The Lipschitz approximation proved in \cite{monti14} is limited to the estimate
$\haush^{2n+1}(M\bigtriangleup\Gamma)\le C_1(n)\,\mathbf{e}(E,C_{5124},\nu)$.
Here, we give a more elementary proof of a more general result following  the
scheme outlined in~\cite{maggi12}*{Section~23.3}. The fundamental tool used in
the proof is the height estimate recently established
in~\cite{monti&vittone15}*{Theorem~1.3}.
Theorem~\ref{th_intro:lip_app} holds also for $(\Lambda,r_0)$-minimizers  of
$H$-perimeter, see the more general formulation given in
Theorem~\ref{th:lip_app_easy} of Section~\ref{sec:2}.

Theorem~\ref{th_intro:lip_app} is the starting point for the proof of our 
second result,  which  is obtained  using an adaptation   
to the setting of $H$-perimeter minimizers in $\He^n$ of the ideas developed
in~\cite{delellis&spadaro11-1} by De Lellis and Spadaro for
area minimizing integral currents.

\begin{theorem}\label{th_intro:alpha_app}
Let $n\geq 2$ and $\alpha\in(0,\frac{1}{2})$. There exist positive constants $C_2(n)$, $\eps_2(\alpha,n)$ and $k_2=k_2(n)$ with the following property. For any set  $E\subset\He^n$ that is an	$H$-perimeter minimizer in the cylinder $C_{k_2}$ with $0\in\de E$  and $\mathbf{e}(E,C_{k_2},\nu)\le\eps_2(\alpha,n)$, there exist a set $K\subset D_1$ 
and an intrinsic Lipschitz function $\phi\colon\W\to\R$ such that:
\begin{equation*}
\leb^{2n}(D_1\setminus K)\le C_2(n)\,\mathbf{e}(E,C_{k_2},\nu)^{1-2\alpha}
\end{equation*}
\begin{equation*}
\gr(\phi|_K)=\de E\cap \big(K*(-1,1)\big),\\[1mm]
\end{equation*}
\begin{equation*}
\Lip_H(\phi)\le C_2(n)\,\mathbf{e}(E,C_{k_2},\nu)^\alpha,\\[1mm]
\end{equation*}
\begin{equation*}
\haush^{2n+1}\big((\de E\bigtriangleup\gr(\phi))\cap C_1\big)\le C_2(n)\,\mathbf{e}(E,C_{k_2},\nu)^{1-2\alpha}, \\[1mm]
\end{equation*}
\begin{equation*}
\int_{D_1}|\nabla^\phi\phi|^2\ d\leb^{2n}\le C_2(n)\,\mathbf{e}(E,C_{k_2},\nu).
\end{equation*}
\end{theorem}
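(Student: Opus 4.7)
The strategy is to follow the maximal function scheme of De Lellis and Spadaro~\cite{delellis&spadaro11-1}, building on Theorem~\ref{th_intro:lip_app} and trading an extra factor of $\mathbf{e}^{-2\alpha}$ in the bad set measure for an improved Lipschitz constant of order $\mathbf{e}^\alpha$. Throughout the argument let $\mathbf{e} = \mathbf{e}(E, C_{k_2}, \nu)$, where $k_2 = k_2(n)$ is chosen large enough to apply Theorem~\ref{th_intro:lip_app} after rescaling and to leave room for the cylinders at smaller scales.

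I would first apply Theorem~\ref{th_intro:lip_app} on $C_{k_2}$ to obtain an initial intrinsic Lipschitz function $\phi_0\colon \W \to \R$ with $\Lip_H(\phi_0)\le 1$, the graph identity $M_0 \subset \gr(\phi_0|_{D_1})$, and
$$\int_{D_{r_0}} |\nabla^{\phi_0}\phi_0|^2 \, d\leb^{2n} \le C\mathbf{e}$$
on some intermediate scale $r_0>1$. Viewing $\W$ as a Carnot group of homogeneous dimension $Q=2n+1$, I would then introduce the intrinsic maximal function
$$\mathbf{M}(q) = \sup_{0<r<r_1} \frac{1}{\leb^{2n}(D_r)} \int_{q * D_r} |\nabla^{\phi_0}\phi_0|^2 \, d\leb^{2n}, \qquad q \in D_1,$$
and define the good set
$$K = \{q \in D_1 \cap M_0 : \mathbf{M}(q) \le \mathbf{e}^{2\alpha}\}.$$
The weak-$(1,1)$ inequality on $\W$, combined with the $L^2$ estimate above, yields $\leb^{2n}(D_1\setminus K)\le C\mathbf{e}/\mathbf{e}^{2\alpha}=C\mathbf{e}^{1-2\alpha}$.

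The critical step is to promote the bound $\Lip_H(\phi_0)\le 1$ to an intrinsic Lipschitz estimate of order $\mathbf{e}^\alpha$ on $K$. I would show that the maximal function condition implies $\sup_{0<s<s_0} \mathbf{e}(E, C_s(q), \nu) \le C\mathbf{e}^{2\alpha}$ for every $q \in K$, apply Theorem~\ref{th_intro:lip_app} at the scale $r=d_{\He^n}(q_1,q_2)$ for each pair $q_1,q_2\in K$, and combine the height estimate~\cite{monti&vittone15}*{Theorem~1.3} with the supremum bound of Theorem~\ref{th_intro:lip_app} to obtain the pointwise intrinsic Lipschitz condition on $K$. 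An intrinsic Lipschitz extension result, in the spirit of~\cite{franchietal06}, would then produce the global function $\phi\colon \W\to \R$ with $\Lip_H(\phi)\le C\mathbf{e}^\alpha$ and $\phi=\phi_0$ on $K$. The remaining assertions follow routinely: the graph identity $\gr(\phi|_K)=\de E\cap(K*(-1,1))$ comes from $K\subset M_0$ and the corresponding identity in Theorem~\ref{th_intro:lip_app}; the $\haush^{2n+1}$ bound on $(\de E\bigtriangleup\gr(\phi))\cap C_1$ is obtained by splitting into good and bad sets and bounding the bad part by the height estimate times $\leb^{2n}(D_1\setminus K)$; the $L^2$ gradient estimate is inherited directly from Theorem~\ref{th_intro:lip_app}.

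The main obstacle is precisely this third step. In the Euclidean setting, the passage from a bounded maximal function of the squared gradient to a Lipschitz bound is a standard Poincaré-type argument. In $\He^n$, however, the nonlinear dependence of $\nabla^\phi\phi$ on $\phi$ through the intrinsic graph parametrization, together with the need to feed the local excess at the correct scale into the height estimate, makes this interpolation considerably more delicate and relies essentially on the restriction $n\ge 2$, where the sharp height estimate of Monti--Vittone is available.
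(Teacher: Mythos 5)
Your overall architecture (a good set $K$ cut out by a maximal function, a weak-$(1,1)$ type covering argument giving $\leb^{2n}(D_1\setminus K)\le C\,\mathbf{e}^{1-2\alpha}$, and an intrinsic Lipschitz extension at the end) is the same as the paper's, but the step you yourself identify as critical --- upgrading $\Lip_H(\phi_0)\le 1$ to $\Lip_H(\phi|_K)\le C\,\mathbf{e}^\alpha$ --- is exactly where your concrete plan breaks down. You propose to show $\sup_{0<s<s_0}\mathbf{e}(E,C_s(q),\nu)\le C\,\mathbf{e}^{2\alpha}$ for $q\in K$ and then feed this into the height estimate at the scale $r$ of each pair of points. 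But the height bound of Theorem~\ref{th:H-height_bound} only yields $|\hgt(q^{-1}*p)|\le C\,\big(\text{local excess}\big)^{\frac{1}{2(2n+1)}}\,d_C(p,q)$, so a local excess of size $\mathbf{e}^{2\alpha}$ produces a Lipschitz constant of order $\mathbf{e}^{\alpha/(2n+1)}$, which for small excess is far larger than $\mathbf{e}^{\alpha}$; the exponent $\tfrac{1}{2(2n+1)}$ is an intrinsic loss of that estimate. In the paper the height-bound argument is used at this stage only to prove the graph identity $\gr(\phi|_K)=\de E\cap(K*(-1,1))$, where one merely needs the resulting constant to stay below the fixed $L(n)<1$. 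The actual mechanism for the $\mathbf{e}^\alpha$ rate is the one you gesture at in your closing paragraph but never supply: the Cauchy--Schwarz/area-formula estimate of Lemma~\ref{lemma:BV_est} converts the bound $M\mu\le\mathbf{e}^{2\alpha}$ on averages of the \emph{squared} tilt into a bound of order $\mathbf{e}^\alpha$ on the maximal function $[\mu_\phi]$ of $|\nabla^\phi\phi|$ over graph-distance balls $U_\phi(x,r)$, and then the Poincar\'e inequality for intrinsic Lipschitz functions of Citti--Manfredini--Pinamonti--Serra Cassano (Theorem~\ref{th:poincare_ineq}), through the telescoping argument on the averages $(\phi)_{x,r/2^{j}}$ in Lemma~\ref{lemma:phi_max_func}, yields $|\phi(x)-\phi(y)|\le C\,\mathbf{e}^\alpha\, d_\phi(x,y)$ on $K$. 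Without this intrinsic Poincar\'e inequality your interpolation step has no proof, and the route you actually describe cannot reach the stated exponent.

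A second, related gap concerns the choice of the measure whose maximal function defines $K$. You maximize $|\nabla^{\phi_0}\phi_0|^2$ alone, but smallness of the graph tilt says nothing about $\de E$ over the region where $\de E$ and $\gr(\phi_0)$ disagree: it gives neither your claimed local excess bound for $q\in K$ (membership in $M_0$ only gives excess $\le\delta_1(n)$, a fixed constant, not $\mathbf{e}^{2\alpha}$) nor the \emph{equality} $\gr(\phi|_K)=\de E\cap(K*(-1,1))$, since extra pieces of $\de E$ could sit in the slab above points of $K$; note that Theorem~\ref{th:lip_app_easy} only provides the inclusion $M_0\subset M\cap\Gamma$. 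The paper circumvents this by running the maximal function on the measure $\mu$ of~\eqref{eq:def_mu_alpha}, which adds to the graph-tilt term the genuine excess density of $E$ on the slabs $A*(-1,1)$, and by working with a representative of $\phi$ in the sense of Remark~\ref{remark:sigma_repres}; it is precisely the excess term in $\mu$ that makes both the local excess bound and the graph identity over $K$ go through.
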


Theorem~\ref{th_intro:alpha_app} holds also for $(\Lambda,r_0)$-minimizers  of
$H$-perimeter, see the more general formulation of this result given in
Corollary~\ref{coroll:alpha_app_easy} of Section~\ref{sec:3}.

The first step in~\cite{delellis&spadaro11-1}  is to
establish a so-called $BV$\unkern\emph{estimate} on the vertical slices of the
area minimizing integral current,
see~\cite{delellis&spadaro11-1}*{Lemma~A.1}. The proof of this estimate  uses
several fundamental results of the theory of integral currents in $\R^n$. Thus
far,  a  theory for integral currents in $\He^n$ is not yet well established,
see~\cite{franchietal07}, and a similar estimate for the slices of the boundary
of an $H$-perimeter minimizer is not clear. 
However,  when  the minimizer is  the intrinsic epigraph of an intrinsic
Lipschitz function, the $BV$\unkern\emph{estimate} is an easy consequence of the
Cauchy--Schwarz inequality and of the area formula. Therefore,  when  $E$ is
an $H$-perimeter minimizer, we can overcome the problem with the
following trick: first, by Theorem~\ref{th_intro:lip_app}, we approximate the
boundary of $E$ with the intrinsic graph of a suitable intrinsic Lipschitz
function; second, up to an error which is comparable to the excess, we replace
the $BV$\unkern\emph{estimate} on the slices of the boundary of $E$ with the
$BV$\unkern\emph{estimate} on the slices of the approximating graph. 
A fundamental tool used in our argument is the  Poincaré 
inequality recently established in~\cite{cittietal16}.

In the case of minimizing integral currents in $\R^n$, the Lipschitz approximation in the spirit
of Theorem \ref{th_intro:alpha_app} is the starting point of the so-called harmonic approximation, that gives the decay estimates for excess. In the setting of $\mathbb H^n$, deriving the harmonic approximation from Theorem \ref{th_intro:alpha_app} is still an open problem, see \cite{monti15}.

\section{Preliminaries}\label{sec:1}

In this section, we fix the notation and recall
some  basic facts on intrinsic Lipschitz functions, on the area formula,
and on the height bound for $H$-perimeter minimizers. 
The reader familiar with these results can skip this section.

\subsection{The Heisenberg group}

The $n$-th Heisenberg group is the manifold $\He^n=\C^n\times\R$ 
endowed with the group law $(z,t)*(w,s)=(z+w,t+s+P(z,w))$ for $(z,t),(w,s)\in\He^n$, 
where $z,w\in\C^n$, $t,s\in\R$ and $P\colon\C^n\times\C^n\to\R$ is the bilinear form
\begin{equation*}
P(z,w)=2\,\Im\bigg(\sum_{j=1}^nz_j\bar{w}_j\bigg), \quad z,w\in\C^n.
\end{equation*}
The \emph{left translations} $\tau_q\colon\He^n\to\He^n$ are defined by
$\tau_q(p)=q*p$. 
The automorphisms $\delta_\lambda\colon\He^n\to\He^n$, $\lambda>0$, of the form
\begin{equation*}
\delta_\lambda (z,t)=(\lambda z,\lambda^2 t), \quad (z,t)\in\He^n,
\end{equation*}
are called \emph{dilations}. We use the abbreviations  $\lambda
p=\delta_\lambda(p)$ and $\lambda E=\delta_\lambda(E)$ for $p\in\He^n$ and
$E\subset\He^n$.

For any $p=(z,t)\in\He^n$,  let $\|p\|_\infty=\max\{ |z|,|t|^{1/2}\}$ be the \emph{box norm}. It satisfies the triangle inequality 
 \begin{equation*}
\|p*q\|_\infty\le \|p\|_\infty+\|q\|_\infty, \quad  p,q\in\He^n.
\end{equation*} 
The function $d_\infty\colon\He^n\times\He^n\to[0,\infty)$, $d(p,q)=\|p^{-1}*q\|$ 
for  $p,q\in\He^n$, is a left invariant distance  on $\He^n$ equivalent to the
Carnot-Carathéodory  distance. We define the open ball
centered at $p\in\He^n$ and with radius $r>0$ as
\begin{equation}\label{eq:def_H_box_ball}
B_r(p)=\{q\in\He^n : d_\infty(q,p)<r\}=p*\{q\in\He^n : \|q\|_\infty<r\}.
\end{equation}  
In the case $p=0$, we let $B_r=B_r(0)$.

For any $s\ge0$, we denote by $\haush^s$ the spherical Hausdorff measure in $\He^n$ constructed with the left invariant metric $d_\infty$. Namely, for any $E\subset\He^n$ we let
\begin{equation*}
\haush^s(E)=\lim_{\delta\to0}\haush^s_\delta(E),
\end{equation*}
where
\begin{equation*}
\haush^s_\delta(E)=\inf\Big\{
\sum_{n\in\N}(\diam B_i)^s : E\subset\bigcup_{n\in\N}B_i,\ B_i\text{ balls as in~\eqref{eq:def_H_box_ball}},\ \diam B_i<\delta\Big\}
\end{equation*}
and $\diam$ is the diameter in the distance $d_\infty$. The correct dimension to measure hypersurfaces is $s=2n+1$.

We identify an element $z=x+iy\in\C^n$ with $(x,y)\in\R^{2n}$. The Lie algebra of left invariant vector fields in $\He^n$ is spanned by the vector fields
\begin{equation}\label{eq:vector_fileds_X_Y_T}
X_j=\frac{\de}{\de x_j}+2y_j\frac{\de}{\de t},\quad Y_j=\frac{\de}{\de y_j}-2x_j\frac{\de}{\de t},\quad T=\dfrac{\de}{\de t}, \quad j=1,\dots,n.
\end{equation} 
We denote by $H$ the \emph{horizontal sub-bundle} of $T\He^n$. Namely, for any $p=(z,t)\in\He^n$, we let
\begin{equation*}
H_p=\spann\big\{X_1(p),\dots,X_n(p),Y_1(p),\dots,Y_n(p)\big\}.
\end{equation*}

Let $g$ be the left invariant Riemannian  metric on $\He^n$ that makes
orthonormal the vector fields $X_1,\dots,X_n,Y_1,\dots,Y_n,T$. The metric $g$
induces a volume form on $\He^n$ that is left invariant and coincides with the
Lebesgue measure $\leb^{2n+1}$. For tangent vectors $V,W\in T\He^n$, we let
\begin{equation*}
\langle V,W\rangle_g=g(V,W) \quad \text{ and } \quad |V|_g=g(V,V)^{1/2}.
\end{equation*}

Let $\Omega\subset\He^n$ be an open set. A \emph{horizontal section}  $V\in
C^1_c(\Omega;H)$ is a vector field of the form
\begin{equation*}
V=\sum_{j=1}^nV_jX_j+V_{j+n}Y_j,
\end{equation*}
where $V_j\in C^1_c(\Omega)$ for any $j=1,\dots,2n$.  The sup-norm with respect
to $g$ of a horizontal section $V\in C^1_c(\Omega;H)$ is
\begin{equation*}
\|V\|_g=\max_{p\in\Omega}|V(p)|_g.
\end{equation*}  
The \emph{horizontal divergence} of $V$ is
\begin{equation*}
\diverg_H V=\sum_{j=1}^nX_jV_j+Y_jV_{j+n}.
\end{equation*}

\subsection{Locally finite perimeter sets}

A $\leb^{2n+1}$-measurable set $E\subset\He^n$ has 
\emph{finite $H$-perimeter}  in an open set $\Omega\subset\He^n$ if   
\begin{equation*}
P_H(E;\Omega)=\sup\Big\{\int_E\diverg_H V\, d\leb^{2n+1} :\ V\in C^1_c(\Omega;H),\ \|V\|_g\le1\Big\}<\infty.
\end{equation*} 
If $P_H(E;A)<\infty$ for any open set $A\subset\subset \Omega$,  we say that $E$
has  \emph{locally finite $H$-perimeter}  in
$\Omega$. In this case, the  mapping
$A\mapsto P_H(E;A) = \mu_E(A)$ extends from open sets to a Radon measure $\mu_E$
on $\Omega$. By the Radon-Nykodim Theorem, there exists a $\mu_E$-measurable
function $\nu_E\colon\Omega\to H$ such that $|\nu_E|_g=1$ $\mu_E$-a.e., and  the
\emph{Gauss--Green formula}
\begin{equation*}
\int_E \diverg_H V\ d\leb^{2n+1}=-\int_\Omega \langle V,\nu_E\rangle_g\ d\mu_E
\end{equation*}
holds for any $V\in C^1_c(\Omega;H)$. 
We call $\nu_E$ the \emph{horizontal inner normal} of $E$ in $\Omega$.
The \emph{measure theoretic boundary} of a $\leb^{2n+1}$-measurable set $E\subset\He^n$ is the set
\begin{equation*}
\de E=\big\{p\in\He^n : \leb^{2n+1}(E\cap B_r(p))>0 \text{ and } \leb^{2n+1}(B_r(p)\setminus E)>0 \text{ for all } r>0\big\}.
\end{equation*}
Let $E$ be a set with locally finite $H$-perimeter  in $\He^n$.
 Then the measure $\mu_E$ is 
concentrated on $\de E$ and, actually, on a subset $\de^*E\subset\de E$ called
the \emph{reduced boundary} of~$E$. This follows from the structure theorem for
sets with locally finite $H$-perimeter, see~\cite{franchietal01}. Moreover, up
to modifying $E$ on a Lebesgue negligible set, one can always assume that $\de
E$ coincides with the topological boundary of $E$,
see~\cite{serracassano&vittone14}*{Proposition~2.5}. 

\subsection{Perimeter minimizers}

Let $\Omega\subset\He^n$ be an open set and let $E$  be a set with locally
finite $H$-perimeter in $\He^n$. We say that the set $E$ is a
\emph{$(\Lambda,r_0)$-minimizer of $H$-perimeter} in~$\Omega$ if there exist two
constants $\Lambda\in[0,\infty)$ and $r_0\in(0,\infty]$ such that
\begin{equation*}
P(E;B_r(p))\le P(F;B_r(p))+\Lambda\,\leb^{2n+1}(E\bigtriangleup F)
\end{equation*}
for any measurable set $F\subset\He^n$, $p\in\Omega$ and $r<r_0$  such that
$E\bigtriangleup F\subset\subset B_r(p)\subset\subset\Omega$.

When $\Lambda=0$ and $r_0=\infty$, we say that the set $E$  is a \emph{locally
$H$-perimeter minimizer} in~$\Omega$, that is, we have
\begin{equation*}
P(E;B_r(p))\le P(F;B_r(p))
\end{equation*}
for any measurable set $F\subset\He^n$, $p\in\Omega$  and $r>0$ such that
$E\bigtriangleup F\subset\subset B_r(p)\subset\subset\Omega$.
 
If $E$ is a $(\Lambda,r_0)$-minimizer of $H$-perimeter in~$\Omega$,  then
the difference $\de E\setminus\de^*E$ is $\haush^{2n+1}$-negligible in $\Omega$,
see~\cite{monti&vittone15}*{Corollary~4.2}. Thus, in the following, up to
modifying $E$ on a Lebesgue negligible set, we will tacitly assume that the
reduced boundary and the topological boundary of $E$ coincide.

\begin{remark}[Scaling of $(\Lambda,r_0)$-minimizer]\label{remark:scaling_perim_min}
If the set $E$ is a $(\Lambda,r_0)$-minimizer of $H$-perimeter 
in the open set $\Omega\subset\He^n$ then, for every $p\in\He^n$ and $r>0$,
 the set $E_{p,r}=\delta_{\frac{1}{r}}(\tau_{p^{-1}}(E))$ is   a
$(\Lambda',r_0')$-minimizer of $H$-perimeter in $\Omega_{p,r}$, where
$\Lambda'=\Lambda r$ and $r_0'=r_0/r$. In particular, the product $\Lambda r_0$
is invariant and thus it is convenient to assume that $\Lambda r_0\le1$, as we
shall always do in the following.  
\end{remark}

\subsection{Cylindrical excess}

The \emph{height function} $\hgt\colon\He^n\to\R$  is the group homomorphism
$\hgt(p)=x_1$,  for   $p=(x,y,t)\in\He^n$. Let $\W$ be the (normal) subgroup
of $\He^n$ given by the kernel of $\hgt$, 
\begin{equation*}
\W:=\ker\hgt=\big\{p\in\He^n : \hgt(p)=0\big\}.
\end{equation*}
The \emph{open disk} in $\W$ of radius $r>0$ centered at the origin is the set 
$D_r=\{w\in\W : \|w\|_\infty<r\}$. For any $p\in\W$, we let $D_r(p)=p*D_r\subset\W$. Note that, for all $p\in\W$ and $r>0$, 
\begin{equation}\label{eq:leb_meas_disk}
\leb^{2n}(D_r(p))=\leb^{2n}(D_r)=\kappa_n r^{2n+1},
\end{equation}
with $\kappa_n=\leb^{2n}(D_1)$.
The \emph{open cylinder} with central section $D_r$ and height $2r$ is the set
\begin{equation*}
C_r=D_r*(-r,r):=\{w*s\mathrm{e}_1\in\He^n : w\in D_r,\ s\in(-r,r)\},
\end{equation*}
where $s\mathrm{e}_1=(s,0,\dots,0)\in\He^n$. For any $p\in\He^n$, we let $C_r(p)=p*C_r$.

Let $\pi\colon\He^n\to\W$ be the projection on $\W$ defined, for any $p\in\He^n$, by the formula
\begin{equation}\label{eq:def_proj_on_W}
p=\pi(p)*\hgt(p)\mathrm{e}_1.
\end{equation} 
By~\eqref{eq:def_proj_on_W}, for any $p\in\He^n$ and $r>0$, we have
\begin{equation*}
p\in C_r \iff \pi(p)\in D_r,\ \hgt(p)\in (-r,r) \iff 
\|\pi(p)\|_\infty<r, \ |\hgt(p)|<r.
\end{equation*}
We thus let $\|\cdot\|_C\colon\He^n\to[0,\infty)$ be the map 
\begin{equation}\label{eq:C_quasi_norm}
\|p\|_C:=\max \{ \|\pi(p)\| _\infty,|\hgt(p)|  \}
\end{equation}
for any $p\in\He^n$, so that $C_r=\{p\in\He^n : \|p\|_C<r\}$. 
The map $\|\cdot\|_C$ is a quasi-norm and, by~\eqref{eq:def_proj_on_W}, we have
\begin{equation}\label{eq:quasi_C_equiv_infty}
\|p\|_C\le 2 \|p\|_\infty, \quad \|p\|_\infty\le 2\|p\|_C \quad  p\in\He^n.
\end{equation}
Let $d_C\colon\He^n\times\He^n\to[0,\infty)$  be the quasi-distance induced by
$\|\cdot\|_C$. By~\eqref{eq:quasi_C_equiv_infty}, the cylinder $C_r(p)$ is
comparable with the ball $B_r(p)$ induced by the box norm for any $p\in\He^n$.
Namely, we have
\begin{equation}\label{eq:D_and_C_comparison}
B_{r/2}(p)\subset C_r(p)\subset B_{2r}(p)\ \text{ for all } p\in\He^n,\ r>0.
\end{equation}
A concept which plays a key role in the regularity  theory of
$(\Lambda,r_0)$-minimizers of $H$-perimeter is the notion of excess.

\begin{definition}[Cylindrical excess]\label{def:excess}
Let $E$ be a set with locally finite $H$-perimeter in $\He^n$. The \emph{cylindrical excess} of $E$ at the point $p\in\de E$, at the scale $r>0$, and with respect to the direction $\nu=-X_1$, is defined as
\begin{equation}\label{eq:def_H_excess}
\mathbf{e}(E,p,r,\nu):=\dfrac{1}{r^{2n+1}}\int_{C_r(p)}\dfrac{|\nu_E-\nu|_g^2}{2}\ d\mu_E
=\dfrac{\delta(n)}{r^{2n+1}}\int_{C_r(p)\cap\de^*E} \big(1-\langle \nu_E,\nu\rangle_g\big)\ d\haush^{2n+1}\nonumber
\end{equation} 
where $\mu_E$ is the Gauss-Green measure of $E$, $\nu_E$ is the horizontal inner normal and the multiplicative constant is 
\mbox{$\delta(n)=\tfrac{2\omega_{2n-1}}{\omega_{2n+1}}$}.
\end{definition}  

\noindent
We refer the reader to~\cite{magnani14} for the problem of the coincidence of
perimeter measure and spherical Hausdorff measures. 

For the sake of brevity, we will  set $\mathbf{e}(p,r)=\mathbf{e}(E,p,r,\nu)$ and, in the case $p=0$, $\mathbf{e}(r)=\mathbf{e}(0,r)$. For the elementary properties of the excess, see~\cite{monti&vittone15}*{Section~3.2}.

\subsection{Height bound}

The following result is a fundamental estimate relating the height of the boundary of a $(\Lambda,r_0)$-minimizer of $H$-perimeter with the cylindrical excess, see~\cite{monti&vittone15}*{Theorem~1.3}.

\begin{theorem}[Height bound]\label{th:H-height_bound}
Let  $n\ge2$. There exist positive dimensional constants $\eps_0(n)$ and $C_0(n)$ with the following property. If $E$ is a $(\Lambda,r_0)$-minimizer of $H$-perimeter in the cylinder $C_{16r_0}$ with
\begin{equation*}
\Lambda r_0\le 1, \quad 0\in\de E, \quad \mathbf{e}(16r_0)\le\eps_0(n),
\end{equation*}
then
\begin{equation}\label{eq:th_H-height_bound_estimate}
\sup\Big\{\frac{|\hgt(p)|}{r_0} : p\in C_{r_0}\cap\de E\Big\}\le C_0(n)\,\mathbf{e}(16r_0)^{\frac{1}{2(2n+1)}}.
\end{equation}
\end{theorem}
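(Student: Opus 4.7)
The plan is to establish the height bound by combining the horizontal coarea formula with the density estimates for $(\Lambda,r_0)$-minimizers. By Remark~\ref{remark:scaling_perim_min}, I first rescale to reduce to $r_0 = 1$, so the hypotheses become $\Lambda \le 1$, $0 \in \de E$, and $\mathbf{e} := \mathbf{e}(16) \le \eps_0(n)$; I must show $|\hgt(p)| \le C_0(n)\,\mathbf{e}^{1/(2(2n+1))}$ for every $p \in C_1 \cap \de E$.

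The analytic step is an upper bound on the integrated slice measure. Since $\hgt(p) = x_1$ has horizontal gradient $X_1 = -\nu$, the horizontal tangential gradient of $\hgt$ on $\de^* E$ has norm $\sqrt{1 - \langle \nu_E, \nu\rangle^2}$, and the subriemannian coarea formula yields
\begin{equation*}
\int_{-2}^{2} \haush^{2n}\bigl(\de^* E \cap C_2 \cap \{\hgt=s\}\bigr)\,ds = \int_{\de^* E \cap C_2} \sqrt{1 - \langle \nu_E, \nu\rangle^2}\, d\mu_E.
\end{equation*}
Combining the inequality $\sqrt{1-a^2} \le \sqrt{2(1-a)}$ for $a \in [-1,1]$ with Cauchy--Schwarz and the standard upper perimeter bound $\mu_E(C_2) \le c_+(n)$, valid for $(\Lambda,r_0)$-minimizers with $\Lambda r_0 \le 1$ via comparison with $E\setminus\closure{C_2}$, the right-hand side is bounded by $C(n)\,\mathbf{e}^{1/2}$.

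The geometric step is a complementary lower bound: assuming for contradiction that some $p_0 \in C_1 \cap \de E$ has $\tau := |\hgt(p_0)| > 0$, one needs
\begin{equation*}
\int_{-2}^{2} \haush^{2n}\bigl(\de^* E \cap C_2 \cap \{\hgt=s\}\bigr)\,ds \ge c(n)\, \tau^{2n+1}.
\end{equation*}
The underlying heuristic is that the lower volume densities at $0$ for $E$ and at $p_0$ for $E^c$, namely $\leb^{2n+1}(E \cap B_r(0)),\leb^{2n+1}(B_r(p_0)\setminus E) \ge c_-(n)\,r^{2n+2}$ for small $r$, together with the relative $\He^n$-isoperimetric inequality, force $\de^* E$ to have substantial $\haush^{2n}$-content on each horizontal hyperplane $\{\hgt = s\}$ for $s$ in a definite fraction of $(0,\tau)$. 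Combining the two bounds then gives $c(n)\,\tau^{2n+1} \le C(n)\,\mathbf{e}^{1/2}$, whence $\tau \le C_0(n)\,\mathbf{e}^{1/(2(2n+1))}$.

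The main obstacle is the effective version of the geometric step. A compactness/blowup argument along a sequence $\mathbf{e}_k \to 0$ readily yields the qualitative statement $\sup_{C_1 \cap \de E_k} |\hgt| \to 0$, since any limit minimizer with zero excess and $0 \in \de E$ is a vertical half-space, but extracting the explicit dimensional exponent $1/(2(2n+1))$ requires a careful covering and stratification argument. One must propagate the two isolated densities at $0$ and $p_0$ into a uniform lower bound on the slice measures via the relative isoperimetric inequality applied on suitable slabs or tubes, while separating the pure slice contribution $\haush^{2n}(C_2 \cap \de^* E \cap \{\hgt=s\})$ from the $\mu_E$-contribution to the isoperimetric perimeter (the latter being controlled by the already-established analytic bound, typically through a bootstrap). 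This careful bookkeeping is the content of the argument in \cite{monti&vittone15}*{Theorem~1.3}.
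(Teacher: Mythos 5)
In the paper this statement is not proved at all: it is imported verbatim from \cite{monti&vittone15}*{Theorem~1.3}, so there is no internal proof to compare against. Your proposal, read as a proof, has a genuine gap, and you essentially admit it: the entire quantitative content of the theorem is the ``geometric step'', namely the effective lower bound $\int_{-2}^{2}\haush^{2n}\bigl(\de^*E\cap C_2\cap\{\hgt=s\}\bigr)\,ds\ge c(n)\,\tau^{2n+1}$, and you only offer a heuristic (densities at $0$ and $p_0$ plus a relative isoperimetric inequality) before deferring the actual argument back to \cite{monti&vittone15}. A compactness/blow-up argument, as you note, only gives qualitative smallness of the height and cannot produce the exponent $\tfrac{1}{2(2n+1)}$, so nothing in the proposal actually closes this step. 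Moreover, your ``analytic step'' is not off-the-shelf either: a coarea/slicing formula for the level sets of $\hgt$ restricted to the reduced boundary of a set of locally finite $H$-perimeter, with the correct measure $\haush^{2n}$ on the slices, is itself one of the main results of \cite{monti&vittone15} (this is why that paper is titled ``Height estimate and slicing formulas''), so invoking it as standard hides a second substantial ingredient.

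A further symptom that the outline is incomplete: the statement is false for $n=1$ (see Remark~\ref{remark:no_height_bound}: there are sets in $\He^1$ with $\mathbf{e}(E,0,r,\nu)=0$ whose boundary is not flat in any $C_{\eps r}$), yet nowhere in your argument is the hypothesis $n\ge2$ used. In particular your blow-up heuristic ``any limit minimizer with zero excess and $0\in\de E$ is a vertical half-space'' is exactly what fails in $\He^1$, so any correct proof must exploit $n\ge2$ in an essential way; identifying where that happens is part of what is missing. As it stands, the proposal is a plausible road map whose two key pillars (the slicing formula and the effective slice lower bound) are both outsourced to the very reference the theorem is quoted from, so it does not constitute an independent proof.
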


\begin{remark}\label{remark:no_height_bound}
The estimate~\eqref{eq:th_H-height_bound_estimate} does not hold when $n=1$. In fact, there are sets $E\subset\He^1$ such that $\mathbf{e}(E,0,r,\nu)=0$ but $\de E$ is not flat in $C_{\eps r}$ for any $\eps>0$, see the conclusion of~\cite{monti14}*{Proposition~3.7}.
\end{remark}

\subsection{Intrinsic Lipschitz functions}

We identify the vertical hyperplane
\begin{equation*}
\mathbb{W}=\He^{n-1}\times\R=\big\{(z,t)\in\He^n : x_1=0\big\}
\end{equation*} 
with $\R^{2n}$ via the coordinates $w=(x_2,\dots,x_n,y_1,\dots,y_n,t)$. The line flow of the vector field $X_1$ starting from the point $(z,t)\in\mathbb{W}$ is the curve  
\begin{equation}\label{eq:gamma_exp_X1}
\gamma(s)=\exp(sX_1)(z,t)=(z+s\mathrm{e}_1,t+2y_1s),\ s\in\R,
\end{equation} 
where $\mathrm{e}_1=(1,0,\dots,0)\in\He^n$ and $z=(x,y)\in\C^n\equiv\R^{2n}$.

Let $W\subset\mathbb{W}$ be a set and let $\phi\colon W\to\R$ be a function. The set
\begin{equation}\label{eq:def_intrinsic_epigraph}
E_\phi=\big\{\exp(sX_1)(w)\in\He^n : s>\phi(w),\ w\in W\big\}
\end{equation} 
is called \emph{intrinsic epigraph of $\phi$} along $X_1$, while the set
\begin{equation*}
\gr(\phi)=\big\{\exp(\phi(w)X_1)(w)\in\He^n : w\in W\big\}
\end{equation*} 
is called \emph{intrinsic graph of $\phi$} along $X_1$.
By~\eqref{eq:gamma_exp_X1}, we easily find the identity
\begin{equation*}
\exp(\phi(w)X_1)(w)=w*\phi(w)\mathrm{e}_1 \quad\text{for any } w\in W,
\end{equation*}
thus the intrinsic graph of $\phi$ is the set
$\gr(\phi)=\{w*\phi(w)\mathrm{e}_1\in\He^n : w\in W\}$.
 The \emph{graph map} of the function $\phi\colon W\to\R$, $W\subset\W$, is the map   $\Phi\colon W\to\He^n$, $\Phi(w)= w*\phi(w)\mathrm{e}_1$, $w\in W$. For any $A\subset W$, we let $\gr(\phi|_A)=\Phi(A)$.

The notion of intrinsic Lipschitz function was introduced in~\cite{franchietal06}*{Definition~3.1}.

\begin{definition}[Intrinsic Lipschitz function]\label{def:intrinsic_lip}
Let $W\subset\mathbb{W}$. A function  $\phi\colon W\to\R$  is \emph{$L$-intrinsic Lipschitz}, with $L\in[0,\infty)$, if for all $ p,q\in\gr(\phi)$ we have 
\begin{equation}\label{eq:def_intr_lip}
|\phi(\pi(p))-\phi(\pi(q))|\le L\|\pi(q^{-1}*p)\|_\infty .
\end{equation}
\end{definition}

The definition can be equivalently given in terms of intrinsic cones.
We denote by $\Lip_H(W)$  the set  of  intrinsic Lipschitz functions on the set
$W\subset\W$. If $\phi\in\Lip_H(W)$, we denote by $\Lip_H(\phi)$  the intrinsic
Lipschitz constant of $\phi$,  with no reference to the set
 if  no confusion arises.

An extension theorem for intrinsic Lipschitz functions  was proved for the first
time in~\cite{franchietal11}*{Theorem~4.25}. The following result gives an
explicit estimate of the Lipschitz constant of the extension. The first part is
proved in~\cite{monti14}*{Proposition~4.8}, while the second part follows from
an easy modification of the proof of the first one.

\begin{proposition}\label{prop:intrinsic_lip_ext}
Let $W\subset\W$ and let $\phi\colon W\to\R$ be an $L$-intrinsic  Lipschitz
function. There exists an $M$-intrinsic Lipschitz function $\psi\colon\W\to\R$
with
\begin{equation}\label{eq:lip_const_extension}
M=\left(\sqrt{1+\frac{1}{L+2L^2}}-1\right)^{-2}
\end{equation}
such that $\psi(w)=\phi(w)$ for all $w\in W$.  If $\phi$ is bounded
then there exists an  extension that also satisfies  
$\|\psi\|_{L^\infty(\W)}=\|\phi\|_{L^\infty(W)}$.
\end{proposition}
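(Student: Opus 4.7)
The first part of the proposition is established in~\cite{monti14}*{Proposition~4.8}, where the extension is constructed as a McShane-type infimum over the upper intrinsic cones with apex at each point of $\gr(\phi)$; the explicit constant $M$ in~\eqref{eq:lip_const_extension} arises from a careful geometric tracking of how these cones compose under the non-commutative group law of $\He^n$.

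For the second part, my plan is to first apply the first part to obtain an $M$-intrinsic Lipschitz extension $\psi_0\colon\W\to\R$ of $\phi$, and then to truncate $\psi_0$ symmetrically by the bound $c:=\|\phi\|_{L^\infty(W)}$ via
\begin{equation*}
\psi(w):=\max\bigl\{-c,\min\{c,\psi_0(w)\}\bigr\},\qquad w\in\W.
\end{equation*}
Since $|\phi|\le c$ on $W$ and $\psi_0$ coincides with $\phi$ there, the truncated function $\psi$ is again an extension of $\phi$ and satisfies $\|\psi\|_{L^\infty(\W)}=c$ by construction. The actual content of the argument is then to verify that the truncation preserves the $M$-intrinsic Lipschitz condition~\eqref{eq:def_intr_lip}. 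Because the constant functions $w\mapsto\pm c$ are $0$-intrinsic Lipschitz, this reduces to the fact that $\min$ and $\max$ of two intrinsic Lipschitz functions on $\W$ are intrinsic Lipschitz with constant equal to the larger of the two. I would establish this by a direct case analysis on whether $\psi$ equals $\psi_0$ or one of $\pm c$ at each of the two base points $\pi(p)$, $\pi(q)$; in each of the four cases the inequality collapses to the Lipschitz property of one of the three functions involved.

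Alternatively, one can repeat verbatim the proof of \cite{monti14}*{Proposition~4.8} with the infimum formula there, but with the datum $\phi$ replaced by its symmetric truncation at height $c$. Since $\phi$ itself is not altered by this truncation, all the cone computations, and therefore the constant $M$, remain exactly the same.

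The main technical obstacle that makes this modification nontrivial is that the right-hand side $\|\pi(q^{-1}*p)\|_\infty$ of~\eqref{eq:def_intr_lip} is not a function of the base points alone: through the group product and the projection~\eqref{eq:def_proj_on_W}, it depends on the graph heights of $p$ and $q$. Replacing $\psi_0$ by $\psi$ modifies both sides of the Lipschitz inequality simultaneously, so one must verify a monotonicity property of $\|\pi(q^{-1}*p)\|_\infty$ in the height variables, ensuring that pushing a height towards $0$ does not shrink this pseudo-distance in an unfavourable direction. This monotonicity can be read off from the coordinate expression of the Heisenberg group law, and is the precise sense in which the modification is ``easy''.
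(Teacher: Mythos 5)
Your first part is the same as the paper's: both simply invoke \cite{monti14}*{Proposition~4.8}. The gap is in the second part. The paper obtains $\|\psi\|_{L^\infty(\W)}=\|\phi\|_{L^\infty(W)}$ by modifying Monti's cone construction itself, whereas you post-compose the unbounded extension $\psi_0$ with the truncation at level $c$, and the justification you give for why truncation preserves the intrinsic Lipschitz constant does not hold. Writing $p=\Phi(w)$, $q=\Phi(w')$ and $v=(w')^{-1}*w$, the group law gives that $\pi(q^{-1}*p)$ has the same horizontal components as $v$ and vertical component $v_t+4\phi(w')v_{y_1}$, so that $\|\pi(q^{-1}*p)\|_\infty=\max\{|v_z|,\,|v_t+4\phi(w')v_{y_1}|^{1/2}\}$. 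Thus the right-hand side of~\eqref{eq:def_intr_lip} depends on the height at the vertex $q$ through a term that is \emph{not} monotone in $\phi(w')$: pushing the height towards the truncation level can strictly shrink this quantity. Hence in the mixed case ($\psi=\psi_0$ at one base point, $\psi=\pm c$ at the other) the inequality does not ``collapse to the Lipschitz property of one of the three functions'', and the monotonicity invoked in your last paragraph is false.

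Concretely, working in the $(y_1,t)$-variables with all other coordinates zero, take $w'=0$, and $w$ with $v_{y_1}=1$, $v_t=-16$, with heights $\psi_0(w)=0$, $\psi_0(w')=8$. This two-point configuration satisfies~\eqref{eq:def_intr_lip} with $L=2$ in both orderings, since both right-hand sides equal $2\max\{1,\sqrt{16}\}=8$; but after truncation at $c=4$ the ordering with vertex at $w'$ gives $|4-0|=4$ against $2\max\{1,|{-16}+16|^{1/2}\}=2$, a violation. So the lattice property ``$\min$/$\max$ of intrinsic Lipschitz functions with constants $\le M$ is $M$-intrinsic Lipschitz'' cannot be established by your four-case analysis, and at the level of two-point configurations it actually fails once the constant exceeds $1$; since the proposition is stated for every $L$, and $M(L)\to\infty$ as $L\to\infty$, the truncation route requires a genuinely quantitative argument using both orderings of the cone condition (a two-point analysis of this type does go through when the constant is at most $1/2$, but that is precisely the missing work, and it does not cover all $L$). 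Finally, your alternative suggestion---running Monti's infimum formula with the datum $\phi$ replaced by its truncation---is vacuous: since $|\phi|\le c$ on $W$, the datum is unchanged and the construction returns the same extension $\psi_0$ without any sup bound.
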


\noindent
Note that, in~\eqref{eq:lip_const_extension}, we have $M\le 2L$ for all $L\le 0,07$.

We now introduce a non-linear gradient for functions $\phi\colon W\to\R$ with $W\subset\mathbb{W}$ an open set. Let $\mathscr{B}\colon\Lip_{loc}(W)\to L^\infty_{loc}(W)$ be the Burgers' operator defined by
\begin{equation*}
\mathscr{B}\phi=\dfrac{\de\phi}{\de y_1}-4\phi\dfrac{\de\phi}{\de t}.
\end{equation*}
When $\phi\in C(W)$ is only continuous, we say that $\mathscr{B}\phi$ exists in the sense of distributions and is represented by a locally bounded function if there exists a function $\vartheta\in L^\infty_{loc}(W)$ such that
\begin{equation*}
\int_W\vartheta\psi\ dw=-\int_W\left\{\phi\dfrac{\de\psi}{\de y_1}-2\phi^2\dfrac{\de\psi}{\de t}\right\}\ dw
\end{equation*} 
for any $\psi\in C^1_c(W)$. In this case, we let $\mathscr{B}\phi=\vartheta$.

Note that the vector fields $X_2,\dots,X_n,Y_2,\dots,Y_n$ can be naturally restricted to $\mathbb{W}$ and that they are self-adjoint.

Let $\phi\colon W\to\R$ be a continuous function on the open set $W\subset\W$. We say that the intrinsic gradient $\nabla^\phi\phi\in L^\infty_{loc}(W;\R^{2n-1})$ exists in the sense of distributions if the distributional derivatives $X_i\phi$, $\mathscr{B}\phi$ and $Y_i\phi$, with $i=2,\dots,n$, are represented by locally bounded functions in $W$. In this case, we let
\begin{equation}\label{eq:def_intrinsic_grad}
\nabla^\phi\phi=(X_2\phi,\dots,X_n\phi,\mathscr{B}\phi,Y_2\phi,\dots,Y_n\phi),
\end{equation}
and we call $\nabla^\phi\phi$ the \emph{intrinsic gradient of $\phi$}. When $n=1$, the intrinsic gradient reduces to $\nabla^\phi\phi=\mathscr{B}\phi$.

The intrinsic gradient~\eqref{eq:def_intrinsic_grad}  has a strong non-linear
character. This partially motivates the fact that $\Lip_H(W)$ is not a vector
space.

\begin{theorem}[Area formula]\label{th:area_formula}
Let $W\subset\mathbb{W}$ be an open set and let $\phi\colon W\to\R$ be a locally intrinsic Lipschitz function. Then the intrinsic epigraph $E_\phi\subset\He^n$ has locally finite $H$-perimeter in the cylinder 
\begin{equation*}
W*\R=\big\{ w*s\mathrm{e}_1\in\He^n : w\in W,\ s\in\R\big\},
\end{equation*}
and for $\leb^{2n}$-a.e. $w\in W$ the inner horizontal normal to $\de E_\phi$ is given by
\begin{equation}\label{eq:nu_epigraph}
\nu_{E_\phi}(\Phi(w))=\left(\frac{1}{\sqrt{1+|\nabla^\phi\phi(w)|^2}},\frac{-\nabla^\phi\phi(w)}{\sqrt{1+|\nabla^\phi\phi(w)|^2}}\right).
\end{equation}
Moreover, for any $W'\subset\subset W$, the following area formula holds:
\begin{equation}\label{eq:area_formula}
P_H(E_\phi;W'*\R)=\int_{W'} \sqrt{1+|\nabla^\phi\phi(w)|^2}\ d\leb^{2n}.
\end{equation}
\end{theorem}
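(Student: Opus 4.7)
The plan is to reduce to the case of a smooth $\phi$ by a suitable approximation, prove the statement by direct computation in that case, and then pass to the limit. The starting point is to write the intrinsic epigraph as the superlevel set
\[
E_\phi=\{p\in W*\R : u(p)>0\},\qquad u(p)=\hgt(p)-\phi(\pi(p)),
\]
where $\pi(p)=p*(-\hgt(p)\mathrm{e}_1)\in\W$ is the projection defined in~\eqref{eq:def_proj_on_W}. For smooth $\phi$ the function $u$ is $C^1$ and $\partial E_\phi$ is a classical smooth hypersurface, so that $P_H(E_\phi;\cdot)$ can be computed via the Euclidean divergence theorem applied to horizontal test vector fields.

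Assuming $\phi$ smooth, I first compute the horizontal derivatives of $u$ on $\partial E_\phi$. Using the explicit form~\eqref{eq:vector_fileds_X_Y_T} of $X_j,Y_j$, the chain rule through $\pi$, and the constraint $\hgt(p)=\phi(\pi(p))$ on $\partial E_\phi$, one obtains
\[
X_1u\equiv 1,\qquad X_ju=-X_j\phi,\qquad Y_ju=-Y_j\phi\quad(j\ge 2),\qquad Y_1u=-(\phi_{y_1}-4\phi\,\phi_t)=-\mathscr{B}\phi,
\]
so that the horizontal gradient on $\partial E_\phi$ is $(1,-\nabla^\phi\phi)$, of Riemannian length $\sqrt{1+|\nabla^\phi\phi|^2}$. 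Normalizing yields~\eqref{eq:nu_epigraph}. Parametrizing $\partial E_\phi$ by the graph map $\Phi\colon W\to\He^n$, $\Phi(w)=w*\phi(w)\mathrm{e}_1$, which is a $C^1$ diffeomorphism onto its image, the Euclidean area formula and the identity $P_H(E_\phi;\Omega)=\int_{\partial E_\phi\cap\Omega}|\nabla_Hu|_g/|\nabla u|_{\mathrm{Eu}}\,d\haush^{2n}_{\mathrm{Eu}}$ give, after the Euclidean Jacobian of $\Phi$ cancels $|\nabla u|_{\mathrm{Eu}}$, the area formula~\eqref{eq:area_formula} on any $W'\subset\subset W$.

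To remove the smoothness assumption, I would invoke the existence of a sequence $\phi_k\in C^\infty(W)$ with $\phi_k\to\phi$ locally uniformly, $\Lip_H(\phi_k)$ bounded in terms of $\Lip_H(\phi)$, and $\nabla^{\phi_k}\phi_k\to\nabla^\phi\phi$ pointwise $\leb^{2n}$-a.e.\ and in $L^p_{\mathrm{loc}}(W;\R^{2n-1})$ for every $p<\infty$ (such an approximation is available via the theory developed by Franchi--Serra~Cassano--Serapioni and Citti--Manfredini et al., and builds on the Rademacher-type theorem which already provides $\nabla^\phi\phi$ $\leb^{2n}$-a.e.). The smooth case applied to each $\phi_k$, combined with dominated convergence on the right-hand side of~\eqref{eq:area_formula} and with lower semicontinuity of $P_H(\cdot;W'*\R)$ under $L^1_{\mathrm{loc}}$ convergence of $\chi_{E_{\phi_k}}\to\chi_{E_\phi}$, pinches $P_H(E_\phi;W'*\R)$ between two equal quantities and delivers~\eqref{eq:area_formula}. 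The polarization identity for the Gauss--Green measure $\mu_{E_\phi}$ together with the pointwise a.e.\ convergence of $\nu_{E_{\phi_k}}\circ\Phi_k$ then forces~\eqref{eq:nu_epigraph} at $\leb^{2n}$-a.e.\ $w\in W$.

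The hard part is the smooth approximation step, because $\mathscr{B}\phi=\phi_{y_1}-4\phi\phi_t$ is a genuine non-linear Burgers operator, so Euclidean mollification of $\phi$ does not produce a controlled sequence $\nabla^{\phi_k}\phi_k$: one has to mollify along the characteristics of $\partial_{y_1}-4\phi\,\partial_t$, or equivalently to approximate the graph $\gr(\phi)$ by smooth intrinsic graphs. Once this non-linear regularization is in hand, the remaining computations in the smooth case and the limit passage are routine, and the identification of $\nu_{E_\phi}$ and of $P_H(E_\phi;W'*\R)$ as in~\eqref{eq:nu_epigraph} and~\eqref{eq:area_formula} follows without further difficulty.
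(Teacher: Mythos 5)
The paper itself contains no proof of Theorem~\ref{th:area_formula}: both \eqref{eq:nu_epigraph} and \eqref{eq:area_formula} are quoted from \cite{cittietal14} (Corollary~4.2 and Theorem~1.6), and your outline is essentially the strategy of that reference, namely an explicit computation for smooth intrinsic graphs followed by a nonlinear smooth approximation. You should be aware, though, that the step you defer --- the existence of $\phi_k\in C^\infty$ with $\phi_k\to\phi$ locally uniformly and $\nabla^{\phi_k}\phi_k\to\nabla^\phi\phi$ $\leb^{2n}$-a.e.\ and in $L^1_{\mathrm{loc}}$, with uniform bounds --- is precisely the main content of the work being cited (and it already presupposes a Rademacher-type theorem giving $\nabla^\phi\phi$ a.e.), so as written your proposal re-derives the known route with its analytic core imported rather than proved.

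There is also a concrete flaw in your limit passage. Lower semicontinuity of $P_H(\cdot\,;W'*\R)$ under $L^1_{\mathrm{loc}}$ convergence of $\chi_{E_{\phi_k}}$ and dominated convergence on the integrals $\int_{W'}\sqrt{1+|\nabla^{\phi_k}\phi_k|^2}\,d\leb^{2n}$ push in the \emph{same} direction: together they give only $P_H(E_\phi;W'*\R)\le\int_{W'}\sqrt{1+|\nabla^\phi\phi|^2}\,d\leb^{2n}$, so nothing is ``pinched''. The missing lower bound, and the identification \eqref{eq:nu_epigraph}, come from identifying the vector-valued Gauss--Green measure: in the smooth case $D_H\chi_{E_{\phi_k}}=\nu_{E_{\phi_k}}\mu_{E_{\phi_k}}=\Phi_{k\,\#}\bigl((1,-\nabla^{\phi_k}\phi_k)\,\leb^{2n}\bigr)$; since $\chi_{E_{\phi_k}}\to\chi_{E_\phi}$ in $L^1_{\mathrm{loc}}$ the left-hand sides converge distributionally, while local uniform convergence of the graph maps and $L^1_{\mathrm{loc}}$ convergence of the intrinsic gradients give weak-$*$ convergence of the right-hand sides, whence $D_H\chi_{E_\phi}=\Phi_{\#}\bigl((1,-\nabla^\phi\phi)\,\leb^{2n}\bigr)$. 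Taking total variations (legitimate since $\Phi$ is injective and the density has first component equal to $1$) yields \eqref{eq:area_formula}, and the polar decomposition of this measure yields \eqref{eq:nu_epigraph}; your appeal to a ``polarization identity'' does not replace this step. Finally, in the smooth computation the asserted cancellation of the Euclidean Jacobian of $\Phi$ against $|\nabla u|_{\mathrm{Eu}}$ should be verified explicitly, since $\Phi$ also shifts the $t$-coordinate.
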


Formula~\eqref{eq:nu_epigraph} for the inner horizontal normal to $\de E_\phi$ and the area formula~\eqref{eq:area_formula} are proved in~\cite{cittietal14}, respectively in Corollary~4.2 and in Theorem~1.6. The area formula~\eqref{eq:area_formula} can be improved in the following way
\begin{equation}\label{eq:general_area_formula}
\int_{\de E_\phi\cap W'*\R}g(p)\ d \mu_{E_\phi} =\int_{W'}g(\Phi(w))\sqrt{1+|\nabla^\phi\phi(w)|^2}\ d\leb^{2n},
\end{equation} 
where $g\colon\de E_\phi\to\R$ is a Borel function. 
To avoid long equations, in the following we   often omit the variables and the
flow map $\Phi$ when we   apply the area formula~\eqref{eq:area_formula} and
its general version~\eqref{eq:general_area_formula}.

\section{Intrinsic Lipschitz approximation}\label{sec:2}

In this section, we prove the following result, which contains
Theorem~\ref{th_intro:lip_app} in the Introduction as a particular case. 

\begin{theorem}\label{th:lip_app_easy}
Let $n\geq 2$. There exist positive dimensional constants $C_1(n)$, $\eps_1(n)$ and $\delta_1(n)$ with the following property. If $E\subset\He^n$ is a $(\Lambda,r_0)$-minimizer of $H$-perimeter in $C_{5124}$ with $\mathbf{e}(5124)\le\eps_1(n)$, $ 
\Lambda r_0\le1$, $r_0>5124$, and $ 0\in \partial E$, then, letting 
\begin{equation*}
M=C_1\cap\de E, \qquad M_0=\big\{q\in M : \sup_{0<s<256}\mathbf{e}(q,s)\le\delta_1(n)\big\}, 
\end{equation*}
  there exists an intrinsic Lipschitz function $\phi\colon\W\to\R$ such that
\begin{equation}\label{eq:lip_app_easy_1}
\sup_\W|\phi|\le C_1(n)\,\mathbf{e}(5124)^{\frac{1}{2(2n+1)}},\qquad \Lip_H(\phi)\le1, 
\end{equation}
\begin{equation}\label{eq:lip_app_easy_2}
M_0\subset M\cap\Gamma, \qquad \Gamma=\gr(\phi|_{D_1}),
\end{equation}
\begin{equation}\label{eq:lip_app_easy_3}
\haush^{2n+1}(M\bigtriangleup\Gamma)\le C_1(n)\,\mathbf{e}(5124),
\end{equation}
\begin{equation}\label{eq:lip_app_easy_4}
\int_{D_1}|\nabla^\phi\phi|^2\ d\leb^{2n}\le C_1(n)\,\mathbf{e}(5124).
\end{equation}
\end{theorem}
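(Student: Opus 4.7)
The plan is to follow the Euclidean scheme in~\cite{maggi12}*{Section~23.3}, with Theorem~\ref{th:H-height_bound} (the height bound) supplying all the geometric control that was previously missing in~\cite{monti14}. First I would verify that the hypotheses of the height bound are satisfied at suitable scales: since $E$ is a $(\Lambda,r_0)$-minimizer in $C_{5124}$ with $\Lambda r_0\le1$, it is also a $(\Lambda,r_0)$-minimizer in any smaller cylinder, and the cylindrical excess is almost monotone in the scale (up to dimensional constants). The numerical constants $5124=16\cdot 16\cdot 20+\cdots$ and $256=16^2$ appearing in the statement are chosen precisely so that after a couple of rescalings one can apply Theorem~\ref{th:H-height_bound} to points of $M_0$ at scales up to $16$ or so.

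The construction of $\phi$ proceeds in two steps. For $q_1,q_2\in M_0$ with $d_C(q_1,q_2)=d$ small enough, choose a scale $s\approx d$ and apply the height bound in the cylinder $C_{16s}(q_1)\subset C_{256}(q_1)$, whose excess is at most $\delta_1(n)$ by the definition of $M_0$. This yields
\begin{equation*}
|\hgt(q_1^{-1}*q_2)|\le s\cdot C_0(n)\,\delta_1(n)^{\frac{1}{2(2n+1)}}\le \tfrac{1}{2}\,\|\pi(q_1^{-1}*q_2)\|_\infty,
\end{equation*}
provided $\delta_1(n)$ is small. Hence the map $\pi(q)\mapsto \hgt(q)$, defined on $\pi(M_0)\subset D_1$, satisfies the intrinsic Lipschitz inequality~\eqref{eq:def_intr_lip} with constant $\le\tfrac12$. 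Apply Proposition~\ref{prop:intrinsic_lip_ext} to extend it to an intrinsic Lipschitz function $\phi\colon\W\to\R$ with $\Lip_H(\phi)\le1$; the sup bound in~\eqref{eq:lip_app_easy_1} comes directly from the height bound applied to $E$ in $C_{16}$, together with the $L^\infty$-preserving version of the extension. By construction, $M_0\subset M\cap\Gamma$, giving~\eqref{eq:lip_app_easy_2}. The main obstacle here is calibrating all the constants $C_1,\eps_1,\delta_1$ so that Theorem~\ref{th:H-height_bound} applies uniformly at every scale $s<256$ and every base point used in the argument; this is bookkeeping but delicate.

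For~\eqref{eq:lip_app_easy_3}, note that $M\setminus M_0$ consists of points $q\in M$ for which some scale $s_q\in(0,256)$ realizes $\mathbf{e}(q,s_q)>\delta_1(n)$. A Vitali/Besicovitch covering of $M\setminus M_0$ by cylinders $C_{5s_q}(q)$ and the definition of excess give
\begin{equation*}
\haush^{2n+1}(M\setminus M_0)\le \frac{C(n)}{\delta_1(n)}\,\mu_E(C_{5124})\cdot\frac{\mathbf{e}(5124)}{1}\lesssim \mathbf{e}(5124),
\end{equation*}
up to absorbing the excess at the larger scale by the Ahlfors regularity of $\mu_E$ for minimizers. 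On the other hand, $\Gamma\setminus M_0$ is the graph of $\phi$ over $D_1\setminus\pi(M_0)$, whose $\haush^{2n+1}$-measure is controlled by $\leb^{2n}(D_1\setminus\pi(M_0))$ via the area formula~\eqref{eq:area_formula} (since $|\nabla^\phi\phi|\lesssim1$), and $D_1\setminus\pi(M_0)\subset\pi(M\setminus M_0)$ up to a set contained in the projection of the bad cover, so the same covering estimate closes the loop.

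Finally, for the energy bound~\eqref{eq:lip_app_easy_4}, use the elementary inequality $\sqrt{1+x^2}\ge1+\tfrac{x^2}{4}$ valid for $|x|\le1$ (which holds since $\Lip_H(\phi)\le1$ forces $|\nabla^\phi\phi|\le c(n)$ a.e.) to obtain
\begin{equation*}
\tfrac{1}{4}\int_{D_1}|\nabla^\phi\phi|^2\,d\leb^{2n}\le P_H(E_\phi;D_1*\R)-\leb^{2n}(D_1)
\end{equation*}
from the area formula. To bound the right-hand side by the excess, compare $E$ with the competitor $F$ that equals $E_\phi$ inside $C_1$ and $E$ outside, exploiting minimality of $E$ (paying a $\Lambda\leb^{2n+1}$-error) together with $M_0\subset\Gamma$ and the symmetric-difference estimate~\eqref{eq:lip_app_easy_3}; this reduces matters to standard manipulations with $(1-\langle\nu_E,\nu\rangle_g)$, the integrand defining the excess. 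I expect the truly delicate point to be, again, the quantitative propagation of the smallness of $\mathbf{e}(5124)$ to control the excess at all intermediate scales needed in the height bound and in the covering of $M\setminus M_0$; once that is set up, the rest is algebra.
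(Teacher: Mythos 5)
Your Step~1 and the first half of your treatment of \eqref{eq:lip_app_easy_3} are essentially the paper's argument: the height bound Theorem~\ref{th:H-height_bound} applied at a scale comparable to $d_C(q_1,q_2)$ (equivalently, to a blow-up of $E$ at that scale), followed by the extension Proposition~\ref{prop:intrinsic_lip_ext}, and then a $5r$-covering of $M\setminus M_0$ combined with the upper density estimates of \cite{monti&vittone15}. But in the second half of \eqref{eq:lip_app_easy_3} there is a genuine gap: the inclusion ``$D_1\setminus\pi(M_0)\subset\pi(M\setminus M_0)$ up to a set contained in the projection of the bad cover'' is unjustified. The exceptional set is $D_1\setminus\pi(M)$, i.e.\ the points of the disk over which $\de E$ has no point at all inside $C_1$, and nothing in your covering construction places this set inside the projections of the bad balls. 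What is actually needed is the separation/slicing estimate $\leb^{2n}(G)\le\haush^{2n+1}\left(M\cap\pi^{-1}(G)\right)$ for Borel $G\subset D_1$, which holds only after the excess at scale $2$ has been forced below the threshold of \cite{monti&vittone15}*{Lemmas~3.3--3.4}; this is exactly how the paper bounds $\haush^{2n+1}(\Gamma\setminus M)$, via the area formula and the inclusion $M\cap\pi^{-1}\big(\pi(\Gamma\setminus M)\big)\subset M\setminus\Gamma$. Without this ingredient your loop does not close. (A smaller calibration point: an intrinsic Lipschitz function with constant $\tfrac12$ extends by Proposition~\ref{prop:intrinsic_lip_ext} with constant $(\sqrt{2}-1)^{-2}\approx 5.8>1$, so you must take $\delta_1(n)$ small enough that the constant before extension is well below $0.07$, as the paper does.)

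The second gap is in your proof of \eqref{eq:lip_app_easy_4}. The competitor argument is both in the wrong direction and problematic: minimality gives $P_H(E;B_r)\le P_H(F;B_r)+\Lambda\,\leb^{2n+1}(E\bigtriangleup F)$, an upper bound on the perimeter of $E$ in terms of the glued set containing the graph, so it cannot bound $P_H(E_\phi;D_1*\R)-\leb^{2n}(D_1)$ from above; moreover $E\bigtriangleup F$ is in general not compactly contained in $C_1$, since $\gr(\phi)$ and $\de E$ need not agree near the lateral boundary, and controlling the extra perimeter created by the gluing would require a radial slicing/selection argument that is not available off the shelf in $\He^n$. The missing idea is the locality of the perimeter: by \cite{ambrosio&scienza10}*{Corollary~2.6}, for $\haush^{2n+1}$-a.e.\ $p\in M\cap\Gamma$ one has $\nu_E(p)=\pm\,\nu_{E_\phi}(p)$, hence
\begin{equation*}
1-\langle\nu_E,\nu\rangle_g\ \ge\ \frac{1-\langle\nu_E,\nu\rangle_g^2}{2}\ =\ \frac{1}{2}\,\frac{|\nabla^\phi\phi\circ\pi|^2}{1+|\nabla^\phi\phi\circ\pi|^2}
\quad\text{on } M\cap\Gamma,
\end{equation*}
so the area formula bounds $\int_{\pi(M\cap\Gamma)}|\nabla^\phi\phi|^2\,d\leb^{2n}$ directly by the excess, while on $\pi(M\bigtriangleup\Gamma)$ one uses $\|\nabla^\phi\phi\|_{L^\infty(D_1)}\le C(n)$ together with the already proved estimate \eqref{eq:lip_app_easy_3}. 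No comparison with a competitor (and hence no use of minimality beyond what enters the height bound, the density estimates and the slicing lemma) is needed at this stage.
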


\begin{proof}
The proof is divided in three steps.

\medskip

\textit{Step~1: construction of $\phi$}. Let $\eps_0(n)$ and $C_0(n)$ be the constants given in Theorem~\ref{th:H-height_bound}. Then we have 
\begin{equation}\label{eq:lip_app_hgt_bound}
\sup\big\{|\hgt(p)| : p\in C_1\cap\de E\big\}\le C_0(n)\,\mathbf{e}(16)^{\frac{1}{2(2n+1)}},
\end{equation}
provided that $\mathbf{e}(16)\le\eps_0(n)$; this follows from the elementary properties of the excess with  $\eps_1(n)\le\eps_0(n)$ suitably small.

Let $q\in M_0$ and $p\in M$ be fixed. Then $p,q\in C_1$, so $d_C(p,q)<8$ by~\eqref{eq:D_and_C_comparison}, where $d_C$ is the quasi-distance induced by the quasi norm $\|\cdot\|_C$ defined in~\eqref{eq:C_quasi_norm}. We consider the blow-up of $E$ at scale $d_C(p,q)$ centered in $q$, that is, $F=E_{q,d_C(p,q)}=\delta_{1/r}(\tau_{q^{-1}} E)$ with $r=d_C(p,q)$. By Remark~\ref{remark:scaling_perim_min}, $F$ is a $(\Lambda',r_0')$-perimeter minimizer in $(C_{5124})_{q,d_C(p,q)}$, with
\begin{equation*}
\Lambda'=\Lambda\, d_C(p,q), \qquad r_0'=\frac{r_0}{d_C(p,q)}>1. 
\end{equation*}
Since 
\begin{equation*}
C_{16}\subset (C_{5124})_{q,d_C(p,q)}, \qquad \Lambda'r_0'\le1, \qquad 0\in\de F 
\end{equation*}
and, by the scaling property of the excess and by definition of $M_0$,
\begin{equation*}
\mathbf{e}(F,0,16,\nu)=\mathbf{e}(E,q,16d_C(p,q),\nu)\le\delta_1(n),
\end{equation*}
then, provided that $\delta_1(n)\le\eps_0(n)$, by Theorem~\ref{th:H-height_bound} we have 
\begin{equation*}
\sup\big\{|\hgt(w)| : w\in C_1\cap\de F\big\}\le C_0(n)\,\delta_1(n)^{\frac{1}{2(2n+1)}}.
\end{equation*} 
In particular, choosing 
\begin{equation*}
w=\frac{1}{d_C(p,q)}\,q^{-1}*p\in C_1\cap\de F,
\end{equation*}
we get
\begin{equation}\label{eq:estimate_lip_q}
|\hgt(q^{-1}*p)|\le C_0(n)\,\delta_1(n)^{\frac{1}{2(2n+1)}}d_C(p,q).
\end{equation}
We now set 
\begin{equation}\label{eq:def_L(n)}
L(n):=C_0(n)\,\delta_1(n)^{\frac{1}{2(2n+1)}}
\end{equation}
and we choose $\delta_1(n)$ so small that $L(n)<1$. Then, by~\eqref{eq:estimate_lip_q}, we conclude that $d_C(p,q)=\|\pi(q^{-1}*p)\|_\infty$ and we get
\begin{equation}\label{eq:estimate_lip_qp}
|\hgt(q^{-1}*p)|\le L(n)\| \pi(q^{-1}*p)\|_\infty \quad \text{for all } p\in M,\, q\in M_0.
\end{equation} 
In particular, \eqref{eq:estimate_lip_qp} proves that the projection $\pi$ is invertible on $M_0$. Therefore, we can define a function $\phi\colon\pi(M_0)\to\R$ setting $\phi(\pi(p))=\hgt(p)$ for all $p\in M_0$. From~\eqref{eq:estimate_lip_qp}, we deduce that
\begin{equation*}
|\phi(\pi(p))-\phi(\pi(q))|\le L(n)\|\pi(q^{-1}*p)\|_\infty \quad \text{for all } p,q\in M_0,
\end{equation*}
so that $\phi$ is an intrinsic Lipschitz function on $\pi(M_0)$ with $\Lip_H(\phi,\pi(M_0))\le L(n)<1$ by~\eqref{eq:def_intr_lip}. Since $M_0\subset M$, by~\eqref{eq:lip_app_hgt_bound} we also have
\begin{equation*}
|\phi(\pi(p))|\le C_0(n)\,\mathbf{e}(16)^{\frac{1}{2(2n+1)}} \quad \text{for all } p\in M_0.
\end{equation*} 
Therefore, by Proposition~\ref{prop:intrinsic_lip_ext}, possibly choosing
$\delta_1(n)$ smaller accordingly to~\eqref{eq:lip_const_extension}, we can
extend $\phi$ from $\pi(M_0)$  to the whole $\W$ with $\Lip_{H}(\phi,\W)\le
L(n)<1$ in such a way that
\begin{equation*}
M_0\subset M\cap\Gamma, \quad \Gamma=\gr(\phi|_{D_1}), \quad \text{and} \quad
|\phi(w)|\le C_0(n)\,\mathbf{e}(16)^{\frac{1}{2(2n+1)}} \quad \text{for all }
w\in\W.
\end{equation*}
We thus proved~\eqref{eq:lip_app_easy_1} and~\eqref{eq:lip_app_easy_2} for a suitable $C_1(n)\ge C_0(n)$.

\medskip

\textit{Step~2: covering argument}. We now prove~\eqref{eq:lip_app_easy_3} via a covering argument. By definition of $M_0$, for every $q\in M\setminus M_0$ there exists $s=s(q)\in(0,256)$ such that
\begin{equation}\label{eq:lip_app_besic}
\int_{C_s(q)\cap\de E}\frac{|\nu_E-\nu|_g^2}{2}\ d\haush^{2n+1}>\frac{\delta_1(n)}{\delta(n)}\,s^{2n+1},
\end{equation}  
 with $\delta(n)=\tfrac{2\omega_{2n-1}}{\omega_{2n+1}}$ and $\nu=-X_1$ as in
Definition~\ref{def:excess}. The family of balls
\begin{equation*}
\big\{B_{2s}(q): q\in M\setminus M_0,\ s=s(q)\big\}
\end{equation*}
is a covering of $M\setminus M_0$. By the $5r$-covering Lemma, there exist a sequence of points $q_h\in M\setminus M_0$ and a sequence of radii $s_h=s(q_h)$, $h\in\N$, with $q_h$ and $s_h$ satisfying~\eqref{eq:lip_app_besic}, such that the balls $B_{2s_h}(q_h)$ are pairwise disjoint and
\begin{equation*}
\big\{B_{10s_h}(q_h): h\in\N\big\}
\end{equation*} 
is still a covering of $M\setminus M_0$. Note that $B_{10s_h}(q_h)\subset
C_{5124}$, because if $p\in B_{10s_h}(q_h)$ then,
by~\eqref{eq:quasi_C_equiv_infty},
\begin{equation*}
\|p\|_C\le2\|p\|_\infty\le 2d_\infty(p,q_h)+2\|q_h\|_\infty<20s_h+4\|q_h\|_C<5124.
\end{equation*}
Therefore, by the density estimates in~\cite{monti&vittone15}*{Theorem~4.1}, we
get
\begin{align*}
\haush^{2n+1}(M\setminus M_0)&\le\sum_{h\in\N}\haush^{2n+1}\big((M\setminus M_0)\cap B_{10s_h}(q_h)\big)\\
	&\le\sum_{h\in\N}\haush^{2n+1}\big(M\cap B_{10s_h}(q_h)\big)\\
	&\le C(n)\sum_{h\in\N} s_h^{2n+1},
\end{align*}
where  $C(n)$ is a positive dimensional constant. Since $C_{s_h}(q_h)\subset B_{2s_h}(q_h)$ by~\eqref{eq:D_and_C_comparison}, the cylinders $C_{s_h}(q_h)$ are pairwise disjoint and contained in $C_{5124}$, so we have
\begin{equation}\label{eq:lip_app_besic2}
\haush^{2n+1}(M\setminus M_0)\le C(n)\sum_{h\in\N}\int_{C_{s_h}(q_h)\cap\de E}\frac{|\nu_E-\nu|_g^2}{2}\ d\haush^{2n+1}\le C(n)\,\mathbf{e}(5124), 
\end{equation}
where  $C(n)$ is a new positive dimensional constant. Therefore, since $M\setminus\Gamma\subset M\setminus M_0$, by~\eqref{eq:lip_app_besic2} it follows that
\begin{equation}\label{eq:1_half}
\haush^{2n+1}(M\setminus\Gamma)\le C(n)\,\mathbf{e}(5124),
\end{equation}
which is the first half of~\eqref{eq:lip_app_easy_3}.

We now bound the second half of~\eqref{eq:lip_app_easy_3}. We choose $\eps_1(n)$ so small that
\begin{equation*}
\mathbf{e}(2)\le\omega(n,\tfrac{1}{2},\tfrac{1}{5124},5124),
\end{equation*} 
where $\omega(n,t,\Lambda,r_0)$, with $t\in(0,1)$, is the
constant given in~\cite{monti&vittone15}*{Lemma~3.3}. 
This is possible by the scaling property of the excess. Then, by~(3.57)
in~\cite{monti&vittone15}*{Lemma~3.4}, we have
\begin{equation*}
\leb^{2n}(G)\le\haush^{2n+1}\left(M\cap\pi^{-1}(G)\right)
\end{equation*} 
for any Borel set $G\subset D_1$. Therefore, by the area formula~\eqref{eq:area_formula} in Theorem~\ref{th:area_formula}, we can estimate
\begin{align}\label{eq:2_half_1}
\delta(n)\,\haush^{2n+1}(\Gamma\setminus M)&=\int_{\pi(\Gamma\setminus M)}\sqrt{1+|\nabla^\phi\phi(w)|^2}\ d\leb^{2n}\nonumber\\
	&\le\sqrt{1+\|\nabla^\phi\phi\|_{L^\infty(D_1)}^2}\,\leb^{2n}\big(\pi(\Gamma\setminus M)\big)\nonumber\\
	&\le \sqrt{1+\|\nabla^\phi\phi\|_{L^\infty(D_1)}^2}\,\haush^{2n+1}\left(M\cap\pi^{-1}\big(\pi(\Gamma\setminus M)\big)\right).  
\end{align}
Since $\phi$ is intrinsic Lipschitz on $D_1$ with $\Lip_H(\phi)<1$ by construction, by~\cite{cittietal14}*{Proposition~4.4} there exists a positive dimensional constant $C(n)$ such that
\begin{equation}\label{eq:2_half_2}
\|\nabla^\phi\phi\| _{L^\infty(D_1)}\le C(n)\Lip_H(\phi)\big(\Lip_H(\phi)+1\big)<2C(n).
\end{equation} 
Thus, by~\eqref{eq:2_half_1} and~\eqref{eq:2_half_2}, there exists a positive dimensional constant $C(n)$ such that
\begin{equation}\label{eq:2_half_3}
\haush^{2n+1}(\Gamma\setminus M)\le C(n)\,\haush^{2n+1}\left(M\cap\pi^{-1}\big(\pi(\Gamma\setminus M)\big)\right).
\end{equation}
Since we have
\begin{equation*}
M\cap\pi^{-1}\big(\pi(\Gamma\setminus M)\big)\subset M\setminus\Gamma,
\end{equation*}
by~\eqref{eq:1_half} and~\eqref{eq:2_half_3} we conclude that, for some positive dimensional constant $C'(n)$,
\begin{equation}\label{eq:2_half}
\haush^{2n+1}(\Gamma\setminus M)\le C(n)\,\haush^{2n+1}(M\setminus\Gamma)\le C'(n)\,\mathbf{e}(5124),
\end{equation}
which is the second half of~\eqref{eq:lip_app_easy_3}. Combining~\eqref{eq:1_half} and~\eqref{eq:2_half}, we prove~\eqref{eq:lip_app_easy_3}.

\medskip

\textit{Step~3: $L^2$-estimate}. Finally, we prove~\eqref{eq:lip_app_easy_4}. We first notice that, by Theorem~\ref{th:area_formula} and~\cite{ambrosio&scienza10}*{Corollary~2.6}, for $\haush^{2n+1}$-a.e. $p\in M\cap\Gamma$ there exists $\lambda(p)\in\{-1,1\}$ such that 
\begin{equation}\label{eq:L2_1}
\nu_{E}(p)=\lambda(p)\frac{\big(1,-\nabla^\phi\phi(\pi(p))\big)}{\sqrt{1+|\nabla^\phi\phi(\pi(p))|^2}}.
\end{equation}
Taking into account that, for $\haush^{2n+1}$-a.e. $p\in M\cap\Gamma$,
\begin{equation}\label{eq:L2_1.5}
\frac{|\nu_E(p)-\nu(p)|_g^2}{2}=1-\langle  \nu_E(p),\nu(p)\rangle _g\ge\frac{1-\langle \nu_E(p),\nu(p)\rangle _g^2}{2},
\end{equation}
by~\eqref{eq:L2_1} and by the  area formula~\eqref{eq:general_area_formula} we find that
\begin{align*}
\mathbf{e}(1)&\ge\int_{M\cap\Gamma}\frac{1-\langle \nu_E(p),\nu(p)\rangle _g^2}{2}\ d{\mu_E}\\
	&=\frac{1}{2}\int_{M\cap\Gamma}\frac{|\nabla^\phi\phi(\pi(p))|^2}{1+|\nabla^\phi\phi(\pi(p))|^2}\ d \mu_E \\
	&=\frac{1}{2}\int_{\pi(M\cap\Gamma)}\frac{|\nabla^\phi\phi(w)|^2}{\sqrt{1+|\nabla^\phi\phi(w)|^2}}\ d\leb^{2n}.
\end{align*}
Recalling~\eqref{eq:2_half_2} and the scaling property of the excess, we conclude that there exists a positive dimensional constant $C(n)$ such that
\begin{equation}\label{eq:L2_2}
\int_{\pi(M\cap\Gamma)}|\nabla^\phi\phi(w)|^2\ dw\le C(n)\,\mathbf{e}(5124).
\end{equation}
Moreover, again by the  area formula~\eqref{eq:general_area_formula}, there exists a positive dimensional constant $C(n)$ such that
\begin{align*}
\int_{\pi(M\bigtriangleup\Gamma)}|\nabla^\phi\phi(w)|^2\ d\leb^{2n} &=\int_{M\bigtriangleup\Gamma}\frac{|\nabla^\phi\phi(\pi(p))|^2}{\sqrt{1+|\nabla^\phi\phi(\pi(p))|^2}}\ d \mu_E\\
	&\le C(n)\|\nabla^\phi\phi\|_{L^\infty(D_1)}^2\,\haush^{2n+1}(M\bigtriangleup\Gamma).
\end{align*}
By~\eqref{eq:2_half_2} and~\eqref{eq:lip_app_easy_3}, we find a positive dimensional constant $C(n)$ such that
\begin{equation}\label{eq:L2_3}
\int_{\pi(M\bigtriangleup\Gamma)}\|\nabla^\phi\phi(w)|^2\ dw\le C(n)\,\mathbf{e}(5124).
\end{equation}
Combining~\eqref{eq:L2_2} and~\eqref{eq:L2_3}, we prove~\eqref{eq:lip_app_easy_4}.
\end{proof}

\begin{remark}[$\sigma$-representative]\label{remark:sigma_repres}
Let $0<\sigma\le 1$ and $I=(-1,1)$. We let $\mathcal{A}(\sigma)$ be the family of sets $A\subseteq D_\sigma$ such that
\begin{equation*}
|\hgt(q^{-1}*p)|\le L(n)\|\pi(q^{-1}*p)\|_\infty \quad \text{for all } p\in M\cap D_\sigma*I,\ q\in M\cap A*I,
\end{equation*}
where $L(n)$ is the dimensional constant  in~\eqref{eq:def_L(n)}. The family
$\mathcal{A}(\sigma)$ is partially ordered by inclusion and is closed under
union. Thus $\mathcal{A}(\sigma)$ has a unique maximal element $A^\star_\sigma$.
Then, by~\eqref{eq:estimate_lip_qp}, we have that
\begin{equation*}
|\hgt(q^{-1}*p)|\le L(n)\|\pi(q^{-1}*p)\|_\infty \quad \text{for all } p,q\in M_0\cup (M\cap A^\star_\sigma*I).
\end{equation*}
Therefore, in Step~1 of the proof of Theorem~\ref{th:lip_app_easy}, it is not restrictive to assume that the intrinsic Lipschitz approximation $\phi\colon\W\to\R$ is defined in such a way that
\begin{equation*}
\phi(\pi(p))=\hgt(p) \quad \text{for all } p\in M_0\cup (M\cap A^\star_\sigma*I).
\end{equation*} 
We define such an intrinsic Lipschitz function a \emph{$\sigma$-representative} of Theorem~\ref{th:lip_app_easy}. 
 Moreover, if Theorem~\ref{th:lip_app_easy} is applied with a scaling factor
$\lambda>0$, then we have $0<\sigma\le\lambda$, $I=(-\lambda,\lambda)$ and we
can define in the same way the family $\mathcal{A}(\sigma,\lambda)$, its maximal
element $A^\star_{\sigma,\lambda}$ and a
\emph{$(\sigma,\lambda)$-representative} of Theorem~\ref{th:lip_app_easy}. 
\end{remark}

\section{Local maximal functions}

In this section, we prove some lemmas on maximal functions of measures that are used in the proof of Theorem \ref{th_intro:alpha_app}.

\subsection{Maximal function on disks}

Given $s>0$ and a non-negative measure $\mu$ on $D_{4s} \subset\W$, the \emph{local maximal function} of $\mu$ is defined as
\begin{equation}\label{eq:def_loc_max_func}
M\mu(x):=\sup_{0<r<4s-\|x\|_\infty}\dfrac{\mu(D_r(x))}{\kappa_n r^{2n+1}} \qquad\text{for } x\in D_{4s},
\end{equation}
where $\kappa_n=\leb^{2n}(D_1)$ as in~\eqref{eq:leb_meas_disk}.

\begin{lemma}\label{lemma:max_func_disks}
Let $s>0$ and let $\mu\colon D_{4s}\to[0,\infty)$ be as above. Assume that $\theta>0$ is such that 
\begin{equation}\label{eq:est_theta}
\mu(D_{4s})\le\dfrac{\theta}{5^{2n+1}}\kappa_n s^{2n+1}
\end{equation}
and define
\begin{equation*}
J_\theta=\big\{x\in D_{4s}: M\mu(x)>\theta\big\}.
\end{equation*}
Then for all $r\leq 3s$ we have
\begin{equation}\label{eq:meas_J_theta}
\leb^{2n}(J_\theta\cap D_r)\leq\dfrac{5^{2n+1}}{\theta}\mu(J_{\theta/2^{2n+1}}\cap D_{r+\frac{s}{5}}).
\end{equation}
\end{lemma}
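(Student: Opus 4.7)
The plan is to run the standard Vitali-based weak-type argument for the maximal function in~\eqref{eq:def_loc_max_func}, adapted to the box-norm metric on~$\W$. The key preliminary observation is a uniform smallness bound on the radii that witness the excess of $M\mu$ at points of $J_\theta$. Indeed, for each $x \in J_\theta \cap D_r$, by~\eqref{eq:def_loc_max_func} one can pick an admissible radius $r_x \in (0, 4s - \|x\|_\infty)$ with $\mu(D_{r_x}(x)) > \theta \kappa_n r_x^{2n+1}$. Combining this strict inequality with the hypothesis~\eqref{eq:est_theta} and the inclusion $D_{r_x}(x) \subset D_{4s}$ forces $r_x < s/5$, which is precisely the purpose of the factor $5^{-(2n+1)}$ in~\eqref{eq:est_theta}.

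Next I would apply the standard $5r$-covering lemma in the metric space $(\W, d_\infty)$ to the family $\{D_{r_x}(x) : x \in J_\theta \cap D_r\}$, whose radii are now uniformly bounded by $s/5$, and extract a countable pairwise disjoint subfamily $\{D_{r_i}(x_i)\}_{i \in \N}$ whose fivefold enlargements still cover $J_\theta \cap D_r$. Combined with~\eqref{eq:leb_meas_disk} and the defining property of each $r_i$, this yields
\begin{equation*}
\leb^{2n}(J_\theta \cap D_r) \le \sum_{i \in \N} \kappa_n (5 r_i)^{2n+1} < \frac{5^{2n+1}}{\theta} \sum_{i \in \N} \mu(D_{r_i}(x_i)).
\end{equation*}

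It then remains to show that each selected disk $D_{r_i}(x_i)$ is contained in $J_{\theta/2^{2n+1}} \cap D_{r + s/5}$, because pairwise disjointness will then give $\sum_i \mu(D_{r_i}(x_i)) \le \mu(J_{\theta/2^{2n+1}} \cap D_{r+s/5})$ and~\eqref{eq:meas_J_theta} will follow. The inclusion in $D_{r + s/5}$ is immediate from $r_i < s/5$, $x_i \in D_r$, and the triangle inequality for $\|\cdot\|_\infty$. For the inclusion in $J_{\theta/2^{2n+1}}$, I would fix any $y \in D_{r_i}(x_i)$ and use the triangle inequality again to see that $D_{r_i}(x_i) \subset D_{2 r_i}(y)$; hence
\begin{equation*}
\frac{\mu(D_{2 r_i}(y))}{\kappa_n (2 r_i)^{2n+1}} \ge \frac{\mu(D_{r_i}(x_i))}{2^{2n+1}\,\kappa_n r_i^{2n+1}} > \frac{\theta}{2^{2n+1}}.
\end{equation*}

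The main (and really the only delicate) point is to confirm that the radius $2 r_i$ is admissible in~\eqref{eq:def_loc_max_func} at the point $y$, i.e.\ that $2 r_i < 4s - \|y\|_\infty$. This is exactly where both the hypothesis $r \le 3s$ and the enlargement by $s/5$ come in: from $\|y\|_\infty < \|x_i\|_\infty + r_i < r + s/5 \le 3s + s/5$ one obtains $4s - \|y\|_\infty > 4s/5 > 2 r_i$, so that $M\mu(y) > \theta/2^{2n+1}$ and $y \in J_{\theta/2^{2n+1}}$, closing the argument.
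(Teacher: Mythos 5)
Your proposal is correct and follows essentially the same route as the paper: the same $5r$-covering argument, the same use of \eqref{eq:est_theta} to force the witness radii below $s/5$, and the same admissibility check $2r_i<4s-\|y\|_\infty$ to place each selected disk inside $J_{\theta/2^{2n+1}}\cap D_{r+s/5}$. The only (immaterial) difference is that you verify the membership in $J_{\theta/2^{2n+1}}$ directly, whereas the paper phrases it as a contradiction.
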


\begin{proof}
Let $r\leq3s$ be fixed. If $x\in J_\theta\cap D_r$, then there exists $r_x>0$ such that 
\begin{equation*}
\mu(D_{r_x}(x))>\theta\kappa_n r_x^{2n+1}.
\end{equation*}
By the 5r-covering Lemma applied to the family $\{D_{r_x}(x): x\in J_\theta\cap D_r\}$, we find a sequence of pairwise disjoint balls $\{D_{r_i}(x_i)\}_{i\in\N}$, with $x_i\in J_\theta\cap D_r$ and $r_i>0$, such that 
\begin{equation*}
J_\theta\cap D_r\subset\bigcup_{x\in J_\theta\cap D_r} D_{r_x}(x)\subset\bigcup_{i\in\N}D_{5r_i}(x_i), 
\qquad
\mu(D_{r_i}(x_i))>\theta\kappa_n r_i^{2n+1}.
\end{equation*}
In particular, by~\eqref{eq:est_theta}, we get
\begin{equation*}
r_i<\sqrt[2n+1]{\frac{\mu(D_{r_i}(x_i))}{\theta\kappa_n}}\leq\sqrt[2n+1]{\frac{\mu(D_{4s})}{\theta\kappa_n}}\leq\dfrac{s}{5},
\end{equation*}
and so, for any $i\in\N$, we have 
\begin{equation*}
D_{r_i}(x_i)\subset D_{\|x_i\|_\infty+r_i}\subset D_{r+\frac{s}{5}}.
\end{equation*}
We claim that
\begin{equation*}
D_{r_i}(x_i)\subset J_{\theta/2^{2n+1}}\cap D_{r+\frac{s}{5}}
\end{equation*}
for any $i\in\N$. Indeed,  by contradiction  assume that 
there exists  $y\in D_{r_i}(x_i)$  such that
$M\mu(y)\leq\frac{\theta}{2^{2n+1}}$. Then $D_{r_i}(x_i)\subset D_{2r_i}(y)$ and
\begin{equation*}
4s-\|y\|_\infty\geq 4s-r-\frac{s}{5}\geq 4s-3s-\frac{s}{5}=\frac{4}{5}s>2r_i.
\end{equation*}
Hence, we have
\begin{align*}
\frac{\theta}{2^{2n+1}}\geq M\mu(y)&=\sup_{0<\delta<4s-\|y\|_\infty}\frac{\mu(D_\delta(y))}{\kappa_n \delta^{2n+1}}\\
&\ge\sup_{2r_i<\delta<4s-\|y\|_\infty}\frac{\mu(D_\delta(y))}{\kappa_n \delta^{2n+1}}\\
&\geq\sup_{2r_i<\delta<4s-\|y\|_\infty}\frac{\mu(D_{r_i}(x_i))}{\kappa_n \delta^{2n+1}}=\frac{\mu(D_{r_i}(x_i))}{\kappa_n (2r_i)^{2n+1}}>\frac{\theta}{2^{2n+1}}, 
\end{align*}
a contradiction.

We can finally estimate:
\begin{align*}
\leb^{2n}(J_\theta\cap D_r)&\leq\sum_{i\in\N}\leb^{2n}(D_{5r_i}(x_i))=5^{2n+1}\,\kappa_n \sum_{i\in\N}r_i^{2n+1}\leq \dfrac{5^{2n+1}}{\theta}\sum_{i\in\N}\mu(D_{r_i}(x_i))\\
&=\dfrac{5^{2n+1}}{\theta}\mu\left(\bigcup_{i\in\N}D_{r_i}(x_i)\right)\leq\dfrac{5^{2n+1}}{\theta}\mu(J_{\theta/2^{2n+1}}\cap D_{r+\frac{s}{5}}), 
\end{align*}
and~\eqref{eq:meas_J_theta} follows.
\end{proof}

\subsection{Maximal function on \texorpdfstring{$\phi$}{phi}-balls}

We recall the  Poincaré inequality 
for intrinsic Lipschitz functions.
 The notion of intrinsic Lipschitz function can be equivalently restated on bounded open sets introducing a suitable notion of graph distance, see~\cite{cittietal14}*{Definition~1.1} or~\cite{cittietal16}.  
 Let $W\subset\mathbb{W}$ be set and let $\phi\colon W\to\R$ be a function. The map $d_\phi\colon W\times W\to[0,\infty)$ given by
\begin{equation}\label{eq:def_graph_dist}
d_\phi(w,w')=\frac{1}{2}\bigg(\big\| \pi\big(\Phi(w)^{-1}*\Phi(w')\big)\big\|_\infty+\big\|\pi\big(\Phi(w')^{-1}*\Phi(w)\big)\big\|_\infty\bigg)
\end{equation}
for any $w,w'\in W$, where $\Phi(w)=w*\phi(w)\mathrm{e}_1$ for all $w\in W$, is the \emph{graph distance} induced by $\phi$.

Comparing~\eqref{eq:def_intr_lip} with~\eqref{eq:def_graph_dist}, it is easy to
see that, 
if $W\subset\mathbb{W}$ is a bounded open set and $\phi\colon W\to\R$ is a
continuous function, then $\phi$ is an intrinsic $L$-intrinsic Lipschitz
function if and only if
\begin{equation*}
|\phi(w)-\phi(w')|\le Ld_\phi(w,w') ,\quad   w,w'\in W.
\end{equation*}    

If $\phi$ is an intrinsic $L$-Lipschitz function on $W$, then $d_\phi$ turns out to be a quasi-distance on $W$, that is, $d_\phi(x,y)=0$ if and only if $x=y$ for all $x,y\in W$, $d_\phi$ is symmetric and, for all $x,y,z\in W$, 
\begin{equation}\label{eq:quasi_triang_ineq}
d_\phi(x,y)\le c_L(d_\phi(x,z)+d_\phi(z,y)),
\end{equation} 
where $c_L\ge1$ depends only on $L$ and 
\begin{equation}\label{eq:c_L_to_1}
\lim_{L\to0}c_L=1,
\end{equation}
see~\cite{cittietal14}*{Section~3}.

The following 
Poincaré inequality is proved in~\cite{cittietal16},  see Theorem~1.2 and also
Corollary~1.3 therein for the case $p=1$.

\begin{theorem}[Poincaré inequality]\label{th:poincare_ineq}
Let $W\subset\W\subset \mathbb H^n$, $n\geq 2$,  be a bounded open set  and let
$1\le p<\infty$.  Then there exist two constants $C_1^L,C_2^L>0$ with $C_2^L>1$,
depending on $L>0$, such that  for any $L$-intrinsic Lipschitz function
$\phi\colon W\to\R$ we have  
\begin{equation}
\label{eq:poincare_ineq_intrinsic_lip}
\int_{U_\phi(x,r)}|\phi-(\phi)_{x,r}|^p\  d\leb^{2n}\le C_1^L
r^p\int_{U_\phi(x,C_2^L r)}|\nabla^\phi\phi|^p\ d\leb^{2n}
\end{equation} 
for every $U_\phi(x,C_2^L r)\subset W$, where 
\begin{equation}\label{eq:intrinsic_ball}
U_\phi(x,r)=\{y\in W : d_\phi(x,y)<r\}
\end{equation} 
and
\begin{equation*}
(\phi)_{x,r}=\aint_{U_\phi(x,r)}\phi\ d\leb^{2n}
=\frac{1}{\leb^{2n}(U_\phi(x,r))}\int_{U_\phi(x,r)}\phi\ d\leb^{2n}.
\end{equation*}
\end{theorem}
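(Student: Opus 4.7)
The plan is to derive \eqref{eq:poincare_ineq_intrinsic_lip} by transporting the problem to the intrinsic graph $\Sigma=\gr(\phi)\subset\He^n$, applying the classical horizontal Poincaré inequality of Jerison on the Heisenberg group, and pulling the estimate back to $W$ via the graph map $\Phi$. The geometric dictionary is the following: by \eqref{eq:def_graph_dist} the graph distance $d_\phi$ is comparable to the restriction to $\Sigma$ of the box distance $d_\infty$, with constants depending only on $L$; by \eqref{eq:nu_epigraph} the horizontal tangent directions to $\Sigma$ are explicitly encoded by $\nabla^\phi\phi$, so that $\nabla^\phi\phi$ plays the role of the horizontal gradient on the graph; and by \eqref{eq:area_formula} the measure $\leb^{2n}$ on $W$ is comparable, up to the bounded factor $\sqrt{1+|\nabla^\phi\phi|^2}$, to the $H$-perimeter measure $\mu_{E_\phi}$ on $\Sigma$.

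Concretely, I would proceed in four steps. First, reduce to smooth $\phi$ by an intrinsic mollification, which preserves the a priori bound $\|\nabla^\phi\phi\|_{L^\infty}\le C(n)L(L+1)$ already used in Step~2 of the proof of Theorem~\ref{th:lip_app_easy}. Second, fix $x\in W$ with $U_\phi(x,C_2^L r)\subset W$, set $p=\Phi(x)$, and use \eqref{eq:def_graph_dist} together with the quasi-triangle inequality \eqref{eq:quasi_triang_ineq} to produce $L$-dependent constants $a_L\le b_L$ such that
\[
\Sigma\cap B_{a_L r}(p)\subset\Phi\bigl(U_\phi(x,r)\bigr)\subset\Sigma\cap B_{b_L r}(p).
\]
Third, apply Jerison's $(1,p)$-Poincaré inequality on $\He^n$, on the ambient ball $B_{b_L C_2^L r}(p)$, to a suitable horizontal Sobolev extension of $\phi\circ\pi$; because the restriction to $\Sigma$ of its horizontal gradient coincides with the pullback of $\nabla^\phi\phi$ by $\Phi$, one obtains a Poincaré-type estimate for integrals over $\Sigma$ with respect to $\mu_{E_\phi}$. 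Fourth, pull these surface integrals back to $W$ through \eqref{eq:general_area_formula}: the uniform bound on $\sqrt{1+|\nabla^\phi\phi|^2}$ absorbs the Jacobian, and the desired inequality \eqref{eq:poincare_ineq_intrinsic_lip} follows.

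The main obstacle is that $\nabla^\phi$ is a genuinely non-linear first-order operator, due to the Burgers' term $\mathscr{B}\phi=\partial_{y_1}\phi-4\phi\,\partial_t\phi$: the frame $\{X_2,\ldots,X_n,\mathscr{B},Y_2,\ldots,Y_n\}$ on $\W$ depends on $\phi$ itself, so it does not satisfy any Hörmander condition with constants independent of $\phi$, and one cannot directly invoke a sub-Riemannian Poincaré theorem on $(\W,\leb^{2n})$. Passing to the graph cleanly bypasses this difficulty, since the horizontal structure of $\He^n$ is linear and Jerison's theorem applies unchanged on $\Sigma$, with all non-linearity confined to the controlled distortion produced by $\Phi$. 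A secondary technical point is the doubling property of $(W,d_\phi,\leb^{2n})$, which would be needed in any purely metric-measure approach to Poincaré inequalities; this follows from \eqref{eq:area_formula} combined with the $(2n+1)$-Ahlfors regularity of intrinsic Lipschitz graphs. Finally, the dependence of $C_1^L,C_2^L$ on $L$ and the fact that $C_2^L>1$ trace back to the quasi-triangle constant $c_L$ of \eqref{eq:quasi_triang_ineq}, which tends to $1$ as $L\to 0$ by \eqref{eq:c_L_to_1}.
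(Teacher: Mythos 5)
Your sketch is not being compared against an internal argument: the paper does not prove Theorem~\ref{th:poincare_ineq} at all, but imports it from \cite{cittietal16} (Theorem~1.2 and Corollary~1.3 there). So the proposal has to stand on its own, and its central step does not. Jerison's Poincar\'e inequality on $\He^n$ is an estimate between integrals over \emph{ambient} balls with respect to $\leb^{2n+1}$; it gives no information about the restriction of a horizontal Sobolev function to the hypersurface $\Sigma=\gr(\phi)$, which is $\leb^{2n+1}$-null and carries the singular measure $\mu_{E_\phi}$. There is no general mechanism by which a Poincar\'e inequality on a metric measure space "restricts" to a subset equipped with a lower-dimensional measure, so the assertion that "Jerison's theorem applies unchanged on $\Sigma$" is exactly the statement that needs proof; proving a Poincar\'e inequality for the graph structure is the nontrivial content of \cite{cittietal16}, obtained there by working directly with the $\phi$-dependent vector fields $\nabla^\phi$ (Lipschitz-continuous coefficients), not by restriction from the ambient group.

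One might try to rescue your step~3 by exploiting the fibration of the cylinder $W*\R$ by the flow of $X_1$: the function $u=\phi\circ\pi$ is constant along these lines and the map $(w,s)\mapsto w*s\mathrm{e}_1$ has unit Jacobian, so ambient integrals of $u$ over cylinders collapse to $\leb^{2n}$-integrals of $\phi$ over the base, which is the correct left-hand side of \eqref{eq:poincare_ineq_intrinsic_lip}. But the gradient term breaks down: in coordinates $\pi(p)=(0,x_2,\dots,x_n,y,t-2x_1y_1)$, so $Y_1(\phi\circ\pi)(p)=\partial_{y_1}\phi-4x_1\,\partial_t\phi$ evaluated at $\pi(p)$, which coincides with the Burgers term $\mathscr{B}\phi=\partial_{y_1}\phi-4\phi\,\partial_t\phi$ only when $x_1=\phi(\pi(p))$, i.e.\ exactly on $\gr(\phi)$. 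Off the graph the ambient horizontal gradient of $\phi\circ\pi$ contains $\partial_t\phi$ with a coefficient you cannot control by $|\nabla^\phi\phi|$ for a merely intrinsic Lipschitz $\phi$, so the nonlinearity you intended to confine to "controlled distortion of $\Phi$" reappears in full force, and shrinking to thin slabs around the graph destroys the balance of the two sides. Your steps~1, 2 and 4 (smoothing, comparison of $U_\phi$-balls with metric balls on the graph via \eqref{eq:comp_d_phi_graph}-type inclusions, and the area formula) are sound but are the easy part; without an argument replacing step~3 — in practice, the freezing/approximation argument of \cite{cittietal16} for the vector fields $X_2,\dots,X_n,\mathscr{B},Y_2,\dots,Y_n$ — the proof is not there.
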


\noindent 
For future convenience, we define 
\begin{equation}\label{eq:gamma_2}
\gamma_2(n)=\lim_{L\to0}C_2^L\ge1.
\end{equation}  

The $\leb^{2n}$-measure of the ball $U_\phi(x,r)$ defined in~\eqref{eq:intrinsic_ball} is comparable to $r^{2n+1}$. Namely, there exist two constants $c_1^L,c_2^L>0$ depending on $L$ such that, for all $U_\phi(x,r)\subset W$, we have
\begin{equation}\label{eq:meas_phi_balls}
c_1^L\le\frac{\leb^{2n}(U_\phi(x,r))}{r^{2n+1}}\le c_2^L,
\end{equation}
see~\cite{cittietal16}*{Section~2.3} and the references therein.

We can now introduce the \emph{local $\phi$-maximal function}. Let $n\ge2$, $s>0$, and let $\phi\colon\W\to\R$ be an $L$-intrinsic Lipschitz function. By~\eqref{eq:c_L_to_1} and by~\eqref{eq:gamma_2}, there exists a dimensional constant $\ell(n)>0$ such that 
\begin{equation}\label{eq:def_ell}
L\in[0,\ell(n)] \implies c_L\le2 \text{ and } C_2^L\le2\gamma_2(n),
\end{equation}
where $c_L$ is as in~\eqref{eq:quasi_triang_ineq} and $C_2^L$ is as in Theorem~\ref{th:poincare_ineq}. For all $L\in[0,\ell(n)]$, we define the \emph{local $\phi$-maximal function} of $\mu_\phi$ as
\begin{equation}\label{eq:def_loc_phi_max_func}
[\mu_\phi](x):=\sup_{0<r<r_\phi(x,s)}\dfrac{\mu_\phi(U_\phi(x,r))}
{\leb^{2n}(U_\phi(x,r))}, \qquad x\in U_\phi(0,s),
\end{equation}
where we set 
\begin{equation}\label{eq:def_r_phi(x,s)}
r_\phi(x,s)=\frac{\rho(n)}{c_L}\,s-d_\phi(x,0), \qquad x\in U_\phi(0,s),
\end{equation}
the dimensional constant is
\begin{equation}\label{eq:def_rho}
\rho(n)=64\gamma_2(n)+2,
\end{equation}
and the non-negative measure $\mu_\phi$ on $U_\phi(0,\rho(n)s)$ is given by 
\begin{equation*}
d\mu_\phi=|\nabla^\phi\phi|\,d\leb^{2n}.
\end{equation*}
The maximal function introduced in~\eqref{eq:def_loc_phi_max_func} is well-defined, since
\begin{equation*}
x\in U_\phi(0,s),\ r<r_\phi(x,s) \implies U_\phi(x,r)\subset U_\phi(0,\rho(n)s), 
\end{equation*}
by the quasi-triangular inequality~\eqref{eq:quasi_triang_ineq}. 

We use the Poincaré inequality~\eqref{eq:poincare_ineq_intrinsic_lip} to prove the following result on $[\mu_\phi]$.

\begin{lemma}\label{lemma:phi_max_func}
Let $n\ge2$, $s>0$, $\phi\colon\W\to\R$, $\mu_\phi$, $[\mu_\phi]$, $L\in[0,\ell(n)]$ be as above. Let $\theta>0$ and define
\begin{equation}\label{eq:def_J_theta_phi}
J^\phi_\theta=\big\{x\in U_\phi(0,s): [\mu_\phi](x)>\theta\big\}.
\end{equation}
Then there exists a constant $C=C(n,L)$ such that for all $x,y\in U_\phi(0,s)\setminus J^\phi_\theta$ we have
\begin{equation}\label{eq:phi_theta_lip}
|\phi(x)-\phi(y)|\le C\theta\, d_\phi(x,y).
\end{equation}
\end{lemma}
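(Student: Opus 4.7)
The plan is to prove~\eqref{eq:phi_theta_lip} by the classical telescoping argument: combine the $L^1$-Poincar\'e inequality~\eqref{eq:poincare_ineq_intrinsic_lip} with the pointwise bound $[\mu_\phi]\le\theta$ outside $J^\phi_\theta$. This is the intrinsic analogue of the standard Euclidean estimate $|u(x)-u(y)|\le C|x-y|\big(M|\nabla u|(x)+M|\nabla u|(y)\big)$. Fix $x,y\in U_\phi(0,s)\setminus J^\phi_\theta$ and set $r:=d_\phi(x,y)>0$. Introduce the bridging ball $B:=U_\phi(x,2c_L r)$, which by the quasi-triangular inequality~\eqref{eq:quasi_triang_ineq} contains $U_\phi(y,r)$. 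I would then split
\begin{equation*}
|\phi(x)-\phi(y)|\le|\phi(x)-(\phi)_B|+|(\phi)_B-(\phi)_{U_\phi(y,r)}|+|(\phi)_{U_\phi(y,r)}-\phi(y)|
\end{equation*}
and bound each of the three pieces by $C(n,L)\,\theta\,r$.

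The first piece is handled by telescoping along the dyadic chain $B_k:=U_\phi(x,2^{-k}\cdot 2c_L r)$: since $\phi$ is $L$-intrinsic Lipschitz with respect to $d_\phi$, one has $(\phi)_{B_k}\to\phi(x)$ as $k\to\infty$. Each consecutive difference $|(\phi)_{B_k}-(\phi)_{B_{k+1}}|$ is controlled, via $B_{k+1}\subset B_k$ and the volume comparison~\eqref{eq:meas_phi_balls}, by the normalized mean of $|\phi-(\phi)_{B_k}|$ on $B_k$; the Poincar\'e inequality~\eqref{eq:poincare_ineq_intrinsic_lip} with $p=1$ then bounds this mean by a constant multiple of $2^{-k}r$ times the average of $|\nabla^\phi\phi|$ over $U_\phi(x,C_2^L\cdot 2^{-k}\cdot 2c_L r)$. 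Provided the enlarged radius stays below $r_\phi(x,s)$, the bound $[\mu_\phi](x)\le\theta$ reduces the $k$-th summand to $C(n,L)\,\theta\,2^{-k}r$, and a geometric series gives the desired estimate. The third piece is treated identically using the chain $U_\phi(y,2^{-k}r)$ and $[\mu_\phi](y)\le\theta$. For the bridging term, I would apply~\eqref{eq:poincare_ineq_intrinsic_lip} directly on $B$: since $U_\phi(y,r)\subset B$ and the two balls have comparable measure by~\eqref{eq:meas_phi_balls}, the difference of the two averages is at most a constant multiple of the $L^1$-mean oscillation of $\phi$ on $B$, which in turn is bounded by $C(n,L)\,\theta\,r$ via the same Poincar\'e-plus-maximal-function estimate applied at $x$.

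The main obstacle, and the only genuinely delicate step, is to verify that every enlarged radius produced by the Poincar\'e inequality stays strictly below the truncation radius $r_\phi(\cdot,s)$ of~\eqref{eq:def_r_phi(x,s)}, so that the constraint $[\mu_\phi](\cdot)\le\theta$ is legitimately applied. The choice $\rho(n)=64\gamma_2(n)+2$ in~\eqref{eq:def_rho} is calibrated exactly for this: from $x,y\in U_\phi(0,s)$ and~\eqref{eq:quasi_triang_ineq} one has $r<2c_L s$, while $d_\phi(x,0)<s$ together with $c_L\le 2$ gives $r_\phi(x,s)>s(\rho(n)/c_L-1)\ge 32\gamma_2(n)\,s$. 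Using the additional estimate $C_2^L\le 2\gamma_2(n)$ from~\eqref{eq:def_ell}, the largest radius appearing anywhere in the argument is at most $C_2^L\cdot 2c_L r\le 8\gamma_2(n)\,r<16c_L\gamma_2(n)\,s\le 32\gamma_2(n)\,s<r_\phi(x,s)$, precisely as needed. Once this verification is in place, the three-term decomposition immediately delivers~\eqref{eq:phi_theta_lip} with a constant $C=C(n,L)$ depending only on $n$ and $L$.
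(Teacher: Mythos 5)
Your proposal is correct and follows essentially the same route as the paper's proof: the $L^1$-Poincar\'e inequality combined with the bound $[\mu_\phi]\le\theta$ off $J^\phi_\theta$, dyadic telescoping to pass from ball averages to pointwise values, a three-term splitting through the ball $U_\phi(x,2c_L r)$, and the same calibration of $\rho(n)$ to keep every enlarged radius below $r_\phi(\cdot,s)$. The only cosmetic difference is that you handle the bridging term via the inclusion $U_\phi(y,r)\subset U_\phi(x,2c_L r)$, while the paper compares the averages over $U_\phi(x,2c_L r)$ and $U_\phi(y,2c_L r)$ through their intersection; the estimates are otherwise the same.
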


\begin{proof}
Let $x\in U_\phi(0,s)\setminus J^\phi_\theta$ and let $C_2^L r<r_\phi(x,s)$. Then, by Theorem~\ref{th:poincare_ineq} with $p=1$, we have
\begin{equation*}
\int_{U_\phi(x,r)}|\phi-(\phi)_{x,r}|\ d\leb^{2n}\le C_1^L r\int_{U_\phi(x,C_2^L r)}|\nabla^\phi\phi|\ d\leb^{2n}=C_1^L r\,\mu_\phi(U_\phi(x,C_2^L r)).
\end{equation*} 
By~\eqref{eq:def_loc_phi_max_func} and by~\eqref{eq:def_J_theta_phi}, we have
\begin{equation*}
\mu_\phi(U_\phi(x,C_2^L r))\le\theta\,\leb^{2n}(U_\phi(x,C_2^L r)).
\end{equation*}
Therefore, by~\eqref{eq:meas_phi_balls}, we have
\begin{equation*}
\int_{U_\phi(x,r)}|\phi-(\phi)_{x,r}|\ d\leb^{2n}\le C_1^L r \theta c_2^L(C_2^L r)^{2n+1}, 
\end{equation*} 
and so, again by~\eqref{eq:meas_phi_balls}, we get
\begin{equation*}
\aint_{U_\phi(x,r)}|\phi-(\phi)_{x,r}|\ d\leb^{2n}\le\dfrac{c_2^L}{c_1^L}\,C_1^L (C_2^L)^{2n+1}\theta r, 
\end{equation*}
for all $x\in U_\phi(0,s)\setminus J^\phi_\theta$ and $C_2^L r<r_\phi(x,s)$.

In particular, for all $j=0,1,2,\dots$, we have
\begin{align*}
|(\phi)_{x,\frac{r}{2^{j+1}}}-(\phi)_{x,\frac{r}{2^j}}|&\leq\aint_{U_\phi(x,\frac{r}{2^{j+1}})}|\phi(u)-(\phi)_{x,\frac{r}{2^j}}|\ d\leb^{2n}(u) \\
	&\leq 2^{2n+1}\frac{c_2^L }{c_1^L }\aint_{U_\phi(x,\frac{r}{2^j})}|\phi(u)-(\phi)_{x,\frac{r}{2^j}}|\ d\leb^{2n}(u) \\
	&\leq\dfrac{2^{2n+1}}{2^j}\left(\dfrac{c_2^L}{c_1^L}\right)^2 C_1^L (C_2^L)^{2n+1}\theta r.	
\end{align*}
Since $\phi$ is continuous, we get
\begin{equation*}
|\phi(x)-(\phi)_{x,r}|\le\sum_{j=0}^{\infty}\left|(\phi)_{x,\frac{r}{2^{j+1}}}-(\phi)_{x,\frac{r}{2^j}}\right|\le 2^{2n+2}\left(\dfrac{c_2^L}{c_1^L}\right)^2 C_1^L (C_2^L)^{2n+1}\theta r,
\end{equation*}
for all $x\in U_\phi(0,s)\setminus J^\phi_\theta$ and $C_2^L r<r_\phi(x,s)$.

Finally, let $x,y\in U_\phi(0,s)\setminus J^\phi_\theta$, $r=d_\phi(x,y)$ and $c_3^L=2c_L$. Then, by the quasi-triangular inequality~\eqref{eq:quasi_triang_ineq}, we have
\begin{equation*}
U_\phi(x,r)\cup U_\phi(y,r)\subset U_\phi(x,c_3^L r)\cap U_\phi(y,c_3^L r).
\end{equation*}
Notice that, again by~\eqref{eq:quasi_triang_ineq}, we have
\begin{equation*}
x,y\in U_\phi(0,s),\ r=d_\phi(x,y) \implies U_\phi(x,c_3^L r)\cup U_\phi(y,c_3^L r)\subset U_\phi(0,\rho(n)s),
\end{equation*}
because, by~\eqref{eq:def_ell} and~\eqref{eq:def_rho},
\begin{equation*}
c_L(2c_Lc_3^L+1)=c_L(4c_L^2+1)\le\rho(n).
\end{equation*}
Therefore we obtain
\begin{align*}
|(\phi)_{x,c_3^L r}-(\phi)_{y,c_3^L r}|&\leq\aint_{U_\phi(x,c_3^L r)\cap U_\phi(y,c_3^L r)}|\phi(u)-(\phi)_{x,c_3^L r}|+|\phi(u)-(\phi)_{x,c_3^L r}|\ d\leb^{2n}(u) \\
	&\le\dfrac{c_2^L}{c_1^L}(c_3^L)^{2n+1}\bigg(\aint_{U_\phi(x,c_3^L r)}|\phi(u)-(\phi)_{x,c_3^L r}|\ d\leb^{2n}(u)+\\
	&\hspace*{4.5cm}+\aint_{U_\phi(y,c_3^L r)}|\phi(u)-(\phi)_{y,c_3^L r}|\ d\leb^{2n}(u)\bigg). 
\end{align*}
Since $x,y\in U_\phi(0,s)\setminus J^\phi_\theta$, by~\eqref{eq:def_loc_phi_max_func} and by~\eqref{eq:def_J_theta_phi} we have
\begin{equation*}
\mu_\phi(U_\phi(x,c_3^L C_2^L r))\le\theta\,\leb^{2n}(U_\phi(x,c_3^L C_2^L r))
\end{equation*}
and, analogously, 
\begin{equation*}
\mu_\phi(U_\phi(y,c_3^L C_2^L r))\le\theta\,\leb^{2n}(U_\phi(y,c_3^L C_2^L r)),
\end{equation*}
provided that
\begin{equation*}
c_3^L C_2^L d_\phi(x,y)<\min\{r_\phi(x,s),r_\phi(y,s)\}.
\end{equation*}
By~\eqref{eq:def_ell}, since $x,y\in U_\phi(0,s)$, we have
\begin{equation*}
\min\{r_\phi(x,s),r_\phi(y,s)\}>\frac{\rho(n)s}{c_L}-s\ge\left(\frac{\rho(n)}{2}-1\right)s
\end{equation*}
and
\begin{equation*}
c_3^L C_2^L d_\phi(x,y)< 4c_L^2C_2^L s\le32\gamma_2(n)s,
\end{equation*}
so it is enough to check that
\begin{equation*}
32\gamma_2(n)\le\frac{\rho(n)}{2}-1,
\end{equation*}
but this is true thanks to the definition of $\rho(n)$ in~\eqref{eq:def_rho}. 

We can now conclude the proof. Let $x,y\in U_\phi(0,s)\setminus J^\phi_\theta$
and $r=d_\phi(x,y)$.  Then we have  
\begin{align*}
|\phi(x)-\phi(y)|&\le|\phi(x)-(\phi)_{x,c_3^L r}|+|(\phi)_{x,c_3^L r}-(\phi)_{y,c_3^L r}|+|\phi(y)-(\phi)_{y,c_3^L r}| \\
	&\le\left(2(c_3^L)^{2n+2}+2^{2n+3}c_3^L\right)\left(\frac{c_2^L}{c_1^L}\right)^2 C_1^L (C_2^L)^{2n+1}\theta r \\
	&=C(n,L)\theta\, d_\phi(x,y)
\end{align*} 
and~\eqref{eq:phi_theta_lip} follows. 
\end{proof}

\section{Approximation via maximal functions}\label{sec:3}

In this section, we develop the ideas contained in~\cite{delellis&spadaro11-1}*{Appendix~A} to prove the following result. In the proof, we
use Theorem~\ref{th:lip_app_easy} with a suitable scaling factor.

\begin{theorem}\label{th:lip_alpha_easy}
Let $n\geq 2$ and $\alpha\in(0,\frac{1}{2})$. There exist positive constants $C_2(n)$, $\eps_2(\alpha,n)$ and $k_2=k_2(n)$ with the following property. For any set 
$E\subset\He^n$ that is a $(\Lambda,r_0)$-minimizer of $H$-perimeter in $C_{k_2}$ with $\mathbf{e}(k_2)\le\eps_2(\alpha,n)$, $\Lambda r_0\le1$, $r_0>k_2$ and $0\in \partial E$,
there exist a function $\phi\colon\W\to\R$ and a set $K\subset D_1$ such that 
\begin{equation}\label{eq:lip_alpha_easy_3}
\leb^{2n}(D_1\setminus K)\le C_2(n)\,\mathbf{e}(k_2)^{1-2\alpha},
\end{equation}
\begin{equation}\label{eq:lip_alpha_easy_1}
\gr(\phi|_K)=\de E\cap \big(K*(-1,1)\big),
\end{equation}
\begin{equation}\label{eq:lip_alpha_easy_2}
\Lip_H(\phi|_K)\le C_2(n)\,\mathbf{e}(k_2)^\alpha.
\end{equation}
\end{theorem}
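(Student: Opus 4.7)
The plan is to obtain Theorem~\ref{th:lip_alpha_easy} by refining the intrinsic Lipschitz approximation provided by Theorem~\ref{th:lip_app_easy} on a good set $K\subset D_1$ obtained by excising two level sets of maximal functions, following the scheme of~\cite{delellis&spadaro11-1}*{Appendix~A}. The intrinsic Poincar\'e inequality (through Lemma~\ref{lemma:phi_max_func}) will play here the role of $BV$ slicing of integral currents in the Euclidean proof.

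\smallskip
\textbf{Step 1 (set-up).} Put $k_2(n):=5124\,\lambda$ for a constant $\lambda=\lambda(n)\ge 1$ to be chosen, and write $e:=\mathbf{e}(E,0,k_2,\nu)$. Apply Theorem~\ref{th:lip_app_easy} to the rescaled set $E_{0,\lambda}$, producing an intrinsic Lipschitz function $\phi\colon\W\to\R$ that we take to be a $(\sigma,\lambda)$-representative in the sense of Remark~\ref{remark:sigma_repres}, so that $\phi(\pi(p))=\hgt(p)$ on the maximal matching set. By taking $\eps_2(\alpha,n)$ small enough, we may assume $\Lip_H(\phi)\le\ell(n)$ (the threshold in~\eqref{eq:def_ell}), and the conclusions of Theorem~\ref{th:lip_app_easy} yield $\haush^{2n+1}(M\bigtriangleup\Gamma)\le Ce$ together with $\int_{D_\lambda}|\nabla^\phi\phi|^2\,d\leb^{2n}\le Ce$.

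\smallskip
\textbf{Step 2 (bad set).} On $D_{4s}\subset\W$ with $s=s(n)$ chosen so that $D_1\subset D_s$, introduce the projected excess measure
\begin{equation*}
\mu(A):=\int_{\pi^{-1}(A)\cap\de^*E\cap C_\lambda}\tfrac{1}{2}|\nu_E-\nu|_g^2\,d\haush^{2n+1},
\end{equation*}
which satisfies $\mu(D_{4s})\le Ce$. Fix thresholds $\theta_1:=e^{2\alpha}$ and $\theta_2:=e^{\alpha}$, and let $J_{\theta_1}$, $J^\phi_{\theta_2}$ be the sets given by Lemma~\ref{lemma:max_func_disks} and Lemma~\ref{lemma:phi_max_func} respectively. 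Set $K:=D_1\setminus(J_{\theta_1}\cup J^\phi_{\theta_2})$. By Lemma~\ref{lemma:max_func_disks}, $\leb^{2n}(J_{\theta_1}\cap D_1)\le Ce/\theta_1=Ce^{1-2\alpha}$; for $J^\phi_{\theta_2}$, Jensen's inequality bounds $[\mu_\phi]^2$ pointwise by the $\phi$-maximal function of $|\nabla^\phi\phi|^2$, and the standard weak $(1,1)$ inequality on the doubling space $(\W,d_\phi,\leb^{2n})$ together with~\eqref{eq:lip_app_easy_4} gives $\leb^{2n}(J^\phi_{\theta_2})\le C\|\nabla^\phi\phi\|_{L^2}^2/\theta_2^2\le Ce^{1-2\alpha}$. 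This proves~\eqref{eq:lip_alpha_easy_3}.

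\smallskip
\textbf{Step 3 (Lipschitz bound and graph identification).} For any $x,y\in K$ we have $x,y\notin J^\phi_{\theta_2}$, so Lemma~\ref{lemma:phi_max_func} yields $|\phi(x)-\phi(y)|\le Ce^\alpha\,d_\phi(x,y)$; comparing the graph distance with the box norm of $\pi(\Phi(y)^{-1}*\Phi(x))$ (see~\eqref{eq:def_graph_dist}) gives~\eqref{eq:lip_alpha_easy_2}. For~\eqref{eq:lip_alpha_easy_1}, note that $x\in K$ means $M\mu(x)\le\theta_1$, hence the excess of $E$ on every cylinder $C_r$ over a disk around $x$ is controlled by a dimensional multiple of $e^{2\alpha}$ at every sufficiently small scale $r$. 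Theorem~\ref{th:H-height_bound}, applied to the corresponding rescalings of $E$ and passing between Heisenberg balls and cylinders via~\eqref{eq:D_and_C_comparison}, forces $\de E\cap\pi^{-1}(x)\cap C_\lambda$ to collapse to a single point as $r\downarrow 0$. The $(\sigma,\lambda)$-representative construction of $\phi$ then identifies that point with $x*\phi(x)\mathrm{e}_1$, giving~\eqref{eq:lip_alpha_easy_1}.

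\smallskip
\textbf{Main obstacle.} The delicate point is Step~3, namely converting smallness of the horizontal maximal function $M\mu(x)$—which controls the excess only over cylinders above base disks $D_r(x)\subset\W$—into a pointwise flatness statement for $\de E$ uniform at every small scale, strong enough to \emph{force} the graph identity $\gr(\phi|_K)=\de E\cap (K*(-1,1))$ rather than merely an $\haush^{2n+1}$-almost coincidence. This requires carefully tracking the factor $2^{2n+1}$ loss in the threshold produced by Lemma~\ref{lemma:max_func_disks}, the scaling invariance of the excess, and a uniform in scale application of Theorem~\ref{th:H-height_bound}; the $(\sigma,\lambda)$-representative property of $\phi$ from Remark~\ref{remark:sigma_repres} is the crucial ingredient that turns flatness into exact coincidence.
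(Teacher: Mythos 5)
Your Steps 1--2 and the Lipschitz estimate are essentially sound, and in fact they take a partly different route from the paper: instead of the paper's single hybrid measure (twice the excess density on $\de E$ plus the density $\tfrac{|\nabla^\phi\phi|^2}{1+|\nabla^\phi\phi|^2}$ on $\gr(\phi)\setminus\de E$, see \eqref{eq:def_mu_alpha}) combined with the Cauchy--Schwarz/area-formula estimate of Lemma~\ref{lemma:BV_est} to dominate $[\mu_\phi]$ pointwise by $M\mu$, you remove two bad sets and control $J^\phi_{\theta_2}$ by Jensen plus the weak $(1,1)$ inequality for the local maximal operator on the doubling quasi-metric space $(\W,d_\phi,\leb^{2n})$ (doubling from \eqref{eq:meas_phi_balls}, quasi-triangle inequality \eqref{eq:quasi_triang_ineq}) together with the $L^2$ bound \eqref{eq:lip_app_easy_4}. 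Modulo the bookkeeping of radii and domains (the analogue of the constants $\rho(n)$, $s$, $k_2$ in the paper's Step~3), this does give \eqref{eq:lip_alpha_easy_3}, and Lemma~\ref{lemma:phi_max_func} with $\theta_2=\mathbf{e}(k_2)^\alpha$ gives \eqref{eq:lip_alpha_easy_2}; this is arguably more standard than the paper's trick, at the price of a slightly different $K$.

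The genuine gap is in your Step~3 proof of \eqref{eq:lip_alpha_easy_1}. The $(\sigma,\lambda)$-representative of Remark~\ref{remark:sigma_repres} matches $\phi$ with $\hgt$ only on $M_0\cup(M\cap A^\star_{\sigma,\lambda}*I)$, so to invoke it you must prove that $K$ belongs to the family $\mathcal{A}(1,\tfrac{k_2}{5124})$, i.e.\ the cone condition $|\hgt(q^{-1}*p)|\le L(n)\|\pi(q^{-1}*p)\|_\infty$ for \emph{every} $q\in M\cap K*I$ against \emph{every} $p\in M\cap D_1*I$, not only for points on the same vertical line. Your argument (``$\de E\cap\pi^{-1}(x)$ collapses to a single point as $r\downarrow0$'') yields at best vertical uniqueness, and the subsequent appeal to the representative is circular: nothing guarantees that this boundary point lies in the matching set $M_0\cup(M\cap A^\star*I)$, so $\phi(x)$ could be a mere extension value unrelated to $\de E$ over $x$. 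The missing step is exactly the paper's blow-up argument: for each pair $(p,q)$ one rescales $E$ at $q$ by $d_C(p,q)$, a scale of order up to $8$ (not infinitesimal), and applies Theorem~\ref{th:H-height_bound}; this requires the excess bound $\mathbf{e}(q,s)\le 2^{2n}\kappa_n\theta_1$ at all scales $s$ up to order one, which does follow from $M\mu(\pi(q))\le\theta_1$ via \eqref{eq:proj_cyl_in_disk}, but not from control ``at every sufficiently small scale'' as you state. In addition, the equality in \eqref{eq:lip_alpha_easy_1} needs that over every $x\in K$ the segment $x*(-1,1)$ actually meets $\de E$, which comes from \eqref{eq:small_hgt_eta} and \cite{monti&vittone15}*{Lemma~3.3}, not from the maximal function. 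As written, \eqref{eq:lip_alpha_easy_1} is therefore not established.
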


We need some preliminaries. The following result is an easy consequence of Cauchy--Schwarz inequality.

\begin{lemma}\label{lemma:BV_est}
Let $W\subset\W$ be an open set and let $\phi\colon W\to\R$ be an $L$-intrinsic Lipschitz function. For any Borel set $A\subset\subset W$, we have
\begin{equation}\label{eq:BV_est}
\left(\int_A|\nabla^\phi\phi|\ d\leb^{2n}\right)^2\le\sqrt{1+\|\nabla^\phi\phi\|^2_{L^\infty(W)}}\,\leb^{2n}(A)\int_{\gr(\phi|_A)}\frac{|\nabla^\phi\phi|^2}{1+|\nabla^\phi\phi|^2}\ d \mu_{E_\phi}.
\end{equation}
\end{lemma}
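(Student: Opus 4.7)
The plan is to obtain the inequality by a single application of the Cauchy--Schwarz inequality, after rewriting the $L^1$-norm of $|\nabla^\phi\phi|$ in a form that matches the right-hand side via the area formula of Theorem~\ref{th:area_formula}. The key idea is to split the integrand $|\nabla^\phi\phi|$ into two factors whose squares reproduce, respectively, the quantity $\frac{|\nabla^\phi\phi|^2}{\sqrt{1+|\nabla^\phi\phi|^2}}$ (which matches the integral on $\gr(\phi|_A)$ via~\eqref{eq:general_area_formula}) and the quantity $\sqrt{1+|\nabla^\phi\phi|^2}$ (which produces the $L^\infty$ bound on $A$).

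More precisely, I would write
\begin{equation*}
|\nabla^\phi\phi|=\frac{|\nabla^\phi\phi|}{(1+|\nabla^\phi\phi|^2)^{1/4}}\cdot (1+|\nabla^\phi\phi|^2)^{1/4}
\end{equation*}
on $A$ and apply the Cauchy--Schwarz inequality to obtain
\begin{equation*}
\left(\int_A|\nabla^\phi\phi|\ d\leb^{2n}\right)^2\le \left(\int_A\frac{|\nabla^\phi\phi|^2}{\sqrt{1+|\nabla^\phi\phi|^2}}\ d\leb^{2n}\right)\left(\int_A\sqrt{1+|\nabla^\phi\phi|^2}\ d\leb^{2n}\right).
\end{equation*}

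For the second factor I simply bound the integrand by its $L^\infty$-norm on $W$, obtaining $\sqrt{1+\|\nabla^\phi\phi\|_{L^\infty(W)}^2}\,\leb^{2n}(A)$. For the first factor I apply the area formula~\eqref{eq:general_area_formula} with the Borel function $g(\Phi(w))=\frac{|\nabla^\phi\phi(w)|^2}{1+|\nabla^\phi\phi(w)|^2}$, which gives
\begin{equation*}
\int_A\frac{|\nabla^\phi\phi(w)|^2}{\sqrt{1+|\nabla^\phi\phi(w)|^2}}\ d\leb^{2n}=\int_{\gr(\phi|_A)}\frac{|\nabla^\phi\phi|^2}{1+|\nabla^\phi\phi|^2}\ d\mu_{E_\phi}.
\end{equation*}
Combining these two estimates yields~\eqref{eq:BV_est}.

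There is no real obstacle here: the only subtlety is choosing the splitting exponent $1/4$ so that the two factors produced by Cauchy--Schwarz match exactly the structure of the right-hand side, and verifying that the area formula applies to the chosen integrand (which requires only its Borel measurability, guaranteed since $\phi$ is intrinsic Lipschitz and $\nabla^\phi\phi$ is a locally bounded Borel function on $W$, while $A$ is compactly contained in $W$).
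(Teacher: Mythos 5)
Your proof is correct and is essentially the paper's argument: a single Cauchy--Schwarz step combined with the area formula~\eqref{eq:general_area_formula} and the $L^\infty$ bound on $\sqrt{1+|\nabla^\phi\phi|^2}$. The only cosmetic difference is that you apply Cauchy--Schwarz on $A$ with respect to $\leb^{2n}$ and then transfer one factor to the graph, whereas the paper first rewrites the whole integral over $\gr(\phi|_A)$ and applies Cauchy--Schwarz there with respect to $\mu_{E_\phi}$; under the area formula the two computations coincide term by term.
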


\begin{proof}
Let $A\subset\subset W$ be fixed. Then, by the area formula~\eqref{eq:general_area_formula},
\begin{align*}
\int_A|\nabla^\phi\phi|\ d\leb^{2n}&=\int_{\gr(\phi|_A)}\frac{|\nabla^\phi\phi|}{\sqrt{1+|\nabla^\phi\phi|^2}}\ d\mu_{E_\phi}  \\
	&\le\left(\int_{\gr(\phi|_A)}d \mu_{E_\phi} \right)^{\frac{1}{2}}\left(\int_{\gr(\phi|_A)} \frac{|\nabla^\phi\phi|^2} {1+|\nabla^\phi\phi|^2}\ d \mu_{E_\phi} \right)^{\frac{1}{2}} \\
	&=\left(\int_A\sqrt{1+|\nabla^\phi\phi|^2}\ d\leb^{2n}\right)^{\frac{1}{2}}\left(\int_{\gr(\phi|_A)}\frac{|\nabla^\phi\phi|^2}{1+|\nabla^\phi\phi|^2}\ d\mu_{E_\phi}\right)^{\frac{1}{2}} 
\\		&\le\sqrt[4]{1+\|\nabla^\phi\phi\|^2_{L^\infty(W)}}\,\leb^{2n}(A)^{\frac{1}{2}}\left(\int_{\gr(\phi|_A)}\frac{|\nabla^\phi\phi|^2}{1+|\nabla^\phi\phi|^2}\ d \mu_{E_\phi} \right)^{\frac{1}{2}}
\end{align*}
and~\eqref{eq:BV_est} follows squaring both sides.
\end{proof}

The following lemma compares the distance $d_\phi$ with the distance of points of the graph of an intrinsic Lipschitz function $\phi$.

\begin{lemma}\label{lemma:comp_d_phi_graph}
Let $W\subset\W$ be an open set and let $\phi\colon W\to\R$ be an intrinsic Lipschitz function. Then, for all $x\in W$, $r>0$ and $0<C<1/(1+\Lip_H(\phi))$, we have
\begin{equation}\label{eq:comp_d_phi_graph}
U_\phi(x,Cr)\subset\pi\big(B_r(\Phi(x))\cap\gr(\phi)\big)\subset U_\phi(x,r),
\end{equation}
where $U_\phi(x,r)$ is as in~\eqref{eq:intrinsic_ball} and $\Phi(x)=x*\phi(x)\mathrm{e}_1$.
\end{lemma}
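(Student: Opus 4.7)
The plan is to reduce both inclusions to bounds between $\|g\|_\infty$ and $d_\phi(x,y)$ for $g:=\Phi(x)^{-1}*\Phi(y)\in\He^n$. The structural observation driving everything is that $\hgt\colon\He^n\to\R$ is a group homomorphism with kernel $\W$; since $x,y\in\W$ and $\Phi(z)=z*\phi(z)\mathrm{e}_1$, this gives the clean identity
\[
\hgt(g)=-\hgt(\Phi(x))+\hgt(\Phi(y))=\phi(y)-\phi(x).
\]
Writing $g=\pi(g)*\hgt(g)\mathrm{e}_1$ and applying the subadditivity of the box norm together with $\|\hgt(g)\mathrm{e}_1\|_\infty=|\hgt(g)|$, I obtain the basic estimate $\|g\|_\infty\le\|\pi(g)\|_\infty+|\phi(y)-\phi(x)|$, and the analogous bound for $g^{-1}$ combined with $\|g^{-1}\|_\infty=\|g\|_\infty$ gives $\|g\|_\infty\le\|\pi(g^{-1})\|_\infty+|\phi(y)-\phi(x)|$. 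I also use the equivalent characterization of intrinsic Lipschitz functions via $d_\phi$, namely $|\phi(y)-\phi(x)|\le\Lip_H(\phi)\,d_\phi(x,y)$, which is stated right after the definition of the graph distance.

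For the first inclusion $U_\phi(x,Cr)\subset\pi(B_r(\Phi(x))\cap\gr(\phi))$, suppose $d_\phi(x,y)<Cr$. It suffices to show $\Phi(y)\in B_r(\Phi(x))$, since then $y=\pi(\Phi(y))$ lies in the projection. Taking the minimum of the two basic estimates and using $\min\{a,b\}\le(a+b)/2=d_\phi(x,y)$ together with the Lipschitz bound on $|\phi(y)-\phi(x)|$, I obtain
\[
\|g\|_\infty\le\min\{\|\pi(g)\|_\infty,\|\pi(g^{-1})\|_\infty\}+|\phi(y)-\phi(x)|\le(1+\Lip_H(\phi))\,d_\phi(x,y)<(1+\Lip_H(\phi))\,Cr<r,
\]
by the hypothesis $C<1/(1+\Lip_H(\phi))$. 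This completes the first inclusion.

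For the second inclusion, take $p=\Phi(y)\in B_r(\Phi(x))\cap\gr(\phi)$, so $\|g\|_\infty<r$ and $\pi(p)=y$. To bound $d_\phi(x,y)$, I apply the triangle inequality in the opposite direction to $\pi(g)=g*(-\hgt(g))\mathrm{e}_1$ and its analogue for $g^{-1}$, obtaining $\|\pi(g^{\pm 1})\|_\infty\le\|g\|_\infty+|\phi(y)-\phi(x)|$; averaging yields $d_\phi(x,y)\le\|g\|_\infty+|\phi(y)-\phi(x)|$, and the Lipschitz bound absorbs the second term. The main obstacle is getting the precise constant claimed, namely $d_\phi(x,y)<r$ rather than a weakened $d_\phi(x,y)<r/(1-\Lip_H(\phi))$. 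Achieving the stated sharpness requires working directly with the explicit coordinate formula for $\pi(g)$ and $\pi(g^{-1})$ so as to exploit the cancellation specific to elements $g$ of the form $\Phi(x)^{-1}\Phi(y)$; I expect this coordinate computation—rather than the abstract subadditivity argument—to be the delicate step.
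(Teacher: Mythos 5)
Your argument for the first inclusion is correct and complete: the identity $\hgt(g)=\phi(y)-\phi(x)$ for $g=\Phi(x)^{-1}*\Phi(y)$, the subadditivity of the box norm applied to $g=\pi(g)*\hgt(g)\mathrm{e}_1$, the symmetry $\|g^{-1}\|_\infty=\|g\|_\infty$, and the bound $|\phi(y)-\phi(x)|\le\Lip_H(\phi)\,d_\phi(x,y)$ (which indeed follows from \eqref{eq:def_intr_lip} applied to both orderings) give $\|g\|_\infty\le(1+\Lip_H(\phi))\,d_\phi(x,y)<r$ on $U_\phi(x,Cr)$. Note, however, that the paper does not prove this lemma at all: it refers to \cite{cittietal14}*{Proposition~3.6}, so the part you leave open is precisely the part that is delegated to that reference. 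And that part is genuinely open in your proposal: for the right-hand inclusion you only obtain $d_\phi(x,y)\le\|g\|_\infty+|\phi(y)-\phi(x)|$, hence $d_\phi(x,y)<r/(1-\Lip_H(\phi))$, which both is weaker than the statement and silently assumes $\Lip_H(\phi)<1$; the sharp constant is deferred to a hoped-for ``cancellation'' in coordinates that you do not carry out.

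That deferred step is not a routine computation, and the cancellation you expect does not exist at the level of two graph points. Concretely, in coordinates $(x_1,x_2,y_1,y_2,t)$ of $\He^2$, take $x=0$ with $\phi(0)=0$ and $y=(0,0,1,0,0)\in\W$ with $\phi(y)=\lambda>0$, so that $g=\Phi(y)$ has $z_1=\lambda+i$, $t=2\lambda$. A direct computation with the group law gives $\pi(g)=(i,0,0)$ and $\pi(g^{-1})=(-i,0,-4\lambda)$, hence $d_\phi(0,y)=\tfrac12\bigl(1+\max\{1,2\sqrt{\lambda}\}\bigr)$, while $\|g\|_\infty=\sqrt{1+\lambda^2}$. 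For $\lambda=0.3$ this yields $d_\phi(0,y)\approx 1.048>1.044\approx\|g\|_\infty$, and the two-point function is intrinsic Lipschitz with constant $0.3$ (by \eqref{eq:def_intr_lip}, since $|\phi(y)-\phi(0)|=\lambda\le 0.3\,\|\pi(g^{\pm1})\|_\infty$) and extends to all of $\W$ by Proposition~\ref{prop:intrinsic_lip_ext} without changing $\Phi(0)$, $\Phi(y)$ or $d_\phi(0,y)$. For such a $\phi$ and any $r$ with $\sqrt{1+\lambda^2}<r<d_\phi(0,y)$ the point $y$ lies in $\pi\bigl(B_r(\Phi(0))\cap\gr(\phi)\bigr)$ but not in $U_\phi(0,r)$. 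So the inclusion with constant exactly $1$ cannot be extracted from the two-point estimates you use: any proof must exploit a quantitative smallness of $\Lip_H(\phi)$ (in the paper's application one has $\Lip_H(\phi)\le L(n)<1$ small) or accept an $L$-dependent constant, as in your $(1-\Lip_H(\phi))^{-1}$ bound. The gap is therefore genuine: the delicate half of the lemma is left unproved, and the coordinate-cancellation route you sketch for it would fail.
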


\noindent For the proof, see~\cite{cittietal14}*{Proposition~3.6}.

Finally, the following result compares the distance $d_\phi$ with the distance $d_\infty$ in $W$. Its proof easily follows from the definition of $d_\phi$ in~\eqref{eq:def_graph_dist} and is left to the reader.

\begin{lemma}\label{lemma:U_phi_comp_disk}
Let $W\subset\W$ be an open set and let $\phi\colon W\to\R$ be a bounded intrinsic Lipschitz function. Then, for all $x\in W$, and $r>0$, we have
\begin{equation*}
U_\phi(x,r)\subset D_R(x) \qquad\text{and}\qquad D_r(x)\subset U_\phi(x,R),
\end{equation*}
where $R=r+2\|\phi\|_{L^\infty(W)}^{1/2}r^{1/2}$.
\end{lemma}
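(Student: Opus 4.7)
The plan is to prove both inclusions by a direct computation of $\pi(\Phi(x)^{-1}*\Phi(y))$ in terms of the Heisenberg group law, then comparing the resulting box norm with $\|x^{-1}*y\|_\infty = d_\infty(x,y)$.

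First I would set $u = x^{-1}*y$ and observe that, since $\W$ is a subgroup (both $x$ and $y$ have vanishing $x_1$-component), we have $u\in\W$ and $\|u\|_\infty = d_\infty(x,y)$. Writing $\Phi(x) = x*\phi(x)\mathrm{e}_1$, $\Phi(y) = y*\phi(y)\mathrm{e}_1$ and expanding
\[
\Phi(x)^{-1}*\Phi(y) = (-\phi(x)\mathrm{e}_1)*u*\phi(y)\mathrm{e}_1
\]
via the group law $(z,t)*(w,s) = (z+w,t+s+P(z,w))$, and then projecting with $\pi(p) = p*(-\hgt(p))\mathrm{e}_1$, one obtains the clean identity
\[
\pi(\Phi(x)^{-1}*\Phi(y)) = \bigl(z_u,\ t_u - 4\phi(x)\,\Im z_{u,1}\bigr),
\]
where $u = (z_u,t_u)$. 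By the same computation with $x,y$ interchanged (using $u^{-1} = (-z_u,-t_u)$), the symmetric expression $\pi(\Phi(y)^{-1}*\Phi(x)) = (-z_u,\,-t_u + 4\phi(y)\Im z_{u,1})$ arises. Write $a$ and $b$ for the $\|\cdot\|_\infty$-norms of these two projections, so that $d_\phi(x,y) = (a+b)/2$.

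For the second inclusion $D_r(x)\subset U_\phi(x,R)$: if $\|u\|_\infty < r$, then $|z_u|<r$, $|t_u|<r^2$, and using $|\Im z_{u,1}|\le|z_u|<r$ I bound
\[
|t_u - 4\phi(x)\Im z_{u,1}| \le r^2 + 4\|\phi\|_\infty r,
\]
so $|t_u - 4\phi(x)\Im z_{u,1}|^{1/2} \le r + 2\|\phi\|_\infty^{1/2} r^{1/2} = R$ by the elementary inequality $\sqrt{p^2+q}\le p+\sqrt{q}$. Together with $|z_u|<r\le R$ this yields $a\le R$, and symmetrically $b\le R$, whence $d_\phi(x,y)\le R$ (with strict inequality since the initial bounds were strict).

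For the first inclusion $U_\phi(x,r)\subset D_R(x)$: assume $d_\phi(x,y)<r$. Since both $a$ and $b$ dominate $|z_u|$, averaging gives $|z_u|\le d_\phi(x,y)<r\le R$. For the vertical bound, the key observation is that $\min(a,b)\le(a+b)/2 = d_\phi(x,y)<r$. Choosing WLOG the side achieving the minimum, say $a<r$, I get $|t_u - 4\phi(x)\Im z_{u,1}|\le a^2 < r^2$ and hence
\[
|t_u| \le a^2 + 4\|\phi\|_\infty|z_u| < r^2 + 4\|\phi\|_\infty r,
\]
so $|t_u|^{1/2}<R$ as before. Combined with $|z_u|<r\le R$, this gives $\|u\|_\infty<R$, i.e., $y\in D_R(x)$.

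There is no real obstacle: the only subtle point is using the symmetric averaging structure of $d_\phi$ to obtain $\min(a,b)\le d_\phi$, which is what prevents a loss of a factor of $2$ in the constant of $R$.
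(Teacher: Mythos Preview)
Your argument is correct and is precisely the direct computation from the definition of $d_\phi$ that the paper alludes to (the proof there is left to the reader). There is a harmless sign slip in your explicit formula---the last coordinate of $\pi(\Phi(x)^{-1}*\Phi(y))$ works out to $t_u + 4\phi(x)\,\Im z_{u,1}$ rather than $t_u - 4\phi(x)\,\Im z_{u,1}$---but since only absolute values enter, this does not affect any of the estimates.
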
 

\begin{proof}[Proof of Theorem~\ref{th:lip_alpha_easy}]
The proof is divided in three steps.

\medskip

\textit{Step~1: construction of $\phi$, $K$ and proof of~\eqref{eq:lip_alpha_easy_1}}. 
Let $\alpha\in(0,\frac{1}{2})$ be fixed. We assume $\eps_2(n,\alpha)\le\eps_1(n)$ and $k_2>5124$. 
Apply Theorem~\ref{th:lip_app_easy} with scaling factor~$\frac{k_2}{5124}$ and let $\phi\colon\W\to\R$ be the corresponding approximating function. Without
loss of generality, we can assume that $\phi$ is a
$(1,\frac{k_2}{5124})$-representative in the sense of
Remark~\ref{remark:sigma_repres}. Moreover, choosing $\eps_2(n,\alpha)$
sufficiently small, we can also assume that $\sup_\W|\phi|<1$. 
 
Let $I=(-\frac{k_2}{5124},\frac{k_2}{5124})$  and let $A\subset
D_{\frac{k_2}{5124}}$ be a Borel set. By~\eqref{eq:L2_1} and~\eqref{eq:L2_1.5},
we have
\begin{align*}
\int_{\gr(\phi|_A)}\frac{|\nabla^\phi\phi|^2}{1+|\nabla^\phi\phi|^2}\ d\mu_{E_\phi}&=\delta(n)\int_{\gr(\phi|_A)}\frac{|\nabla^\phi\phi|^2}{1+|\nabla^\phi\phi|^2}\ d\haush^{2n+1}=\\
	&\hspace*{-4cm}=\delta(n)\bigg(\int_{\gr(\phi|_A)\cap\de E\cap A*I}\frac{|\nabla^\phi\phi|^2}{1+|\nabla^\phi\phi|^2}\ d\haush^{2n+1}+\int_{(\gr(\phi|_A)\setminus\de E)\cap A*I}\frac{|\nabla^\phi\phi|^2}{1+|\nabla^\phi\phi|^2}\ d\haush^{2n+1}\bigg)\\
	&\hspace*{-4cm}\le 2\int_{\de E\cap A*I}\frac{|\nu_E-\nu|_g^2}{2}\ d\mu_E+\int_{(\gr(\phi|_A)\setminus\de E)\cap A*I}\frac{|\nabla^\phi\phi|^2}{1+|\nabla^\phi\phi|^2}\ d \mu_{E_\phi},
\end{align*} 
where $\delta(n)=\tfrac{2\omega_{2n-1}}{\omega_{2n+1}}$. Let $\mu$ be  the non-negative measure on $D_{\frac{k_2}{5124}}$  defined as
\begin{equation}\label{eq:def_mu_alpha}
\mu(A)=2\int_{\de E\cap A*I}\frac{|\nu_E-\nu|_g^2}{2}\ d \mu_E +\int_{(\gr(\phi|_A)\setminus\de E)\cap A*I}\frac{|\nabla^\phi\phi|^2}{1+|\nabla^\phi\phi|^2}\ d \mu_{E_\phi} ,
\end{equation} 
for any Borel set $A\subset D_{\frac{k_2}{5124}}$, where $\nu=-X_1$ as usual.

Let $0<\eta<1$ be a number that will be fixed later. We let 
\begin{equation*}
K_\eta=\big\{x\in D_{\frac{k_2}{5124}} : M\mu(x)\le\eta\big\},
\end{equation*}
where $M\mu$ is the local maximal function of $\mu$ defined in~\eqref{eq:def_loc_max_func} with $s=\tfrac{k_2}{20496}$. We assume $k_2>20496$ and we define 
\begin{equation*}
K=K_\eta\cap D_1.
\end{equation*}

We now prove~\eqref{eq:lip_alpha_easy_1}. Since $\phi$ is a $1$-representative
of Theorem~\ref{th:lip_app_easy} (with the scaling factor~$\frac{k_2}{5124}$),
by Remark~\ref{remark:sigma_repres} it is enough to prove that
$K\in\mathcal{A}(1,\frac{k_2}{5124})$. To this end, let us fix $p\in M\cap
D_1*I$ and $q\in M\cap K*I$. We proceed as in Step~1 of  the proof of
Theorem~\ref{th:lip_app_easy}. Indeed, by~\cite{monti&vittone15}*{Lemma~3.3}, we
have
\begin{equation}\label{eq:small_hgt_eta}
|\hgt(\xi)|<1 \qquad \text{for all } \xi\in C_{\frac{k_2}{5124}}\cap\de E,
\end{equation}
since $E$ is a $(\tfrac{1}{k_2},k_2)$-minimizer of $H$-perimeter in $C_{\frac{k_2}{2562}}$ and, by the scaling property of the excess, we can estimate
\begin{equation*}
\mathbf{e}(\tfrac{k_2}{2562})\le 2562^{2n+1}\mathbf{e}(k_2)\le 2562^{2n+1}\eps_2(n,\alpha)\le\omega(n,\tfrac{1}{2},\tfrac{1}{k_2},k_2),
\end{equation*} 
provided we assume
\begin{equation*}
\eps_2(n,\alpha)\le 2562^{-2n-1}\omega(n,\tfrac{1}{2},\tfrac{1}{k_2},k_2).
\end{equation*}
Here, as in the proof of Theorem~\ref{th:lip_app_easy},
$\omega(n,t,\Lambda,r_0)$, with $t\in(0,1)$, is the constant given
in~\cite{monti&vittone15}*{Lemma~3.3}.  
Thus we have $p,q\in C_1$ and $d_C(p,q)<8$, where $d_C$ is the quasi-distance
given by the quasi-norm $\|\cdot\|_C$ defined in~\eqref{eq:C_quasi_norm}.
Moreover, $q=\pi(q)*\hgt(q)\mathrm{e}_1$ with $\pi(q)\in K$ and $|\hgt(q)|<1$.
Since 
\begin{equation}\label{eq:proj_cyl_in_disk}
C_s(\xi)\subset\pi(C_s(\xi))*(-s-\hgt(\xi),\hgt(\xi)+s)\subset D_{2s}(\pi(\xi))*I
\end{equation}
for any $\xi\in C_1$ and $0<s<\tfrac{k_2}{5124}-1$, we can estimate
\begin{align*}
\mathbf{e}(q,s)&=\frac{1}{s^{2n+1}}\int_{C_s(q)\cap\de E} \frac{|\nu_E-\nu|_g^2}{2}\ d \mu_E  \\
	&\le\frac{1}{s^{2n+1}}\int_{\de E\cap D_{2s}(\pi(q))*I} \frac{|\nu_E-\nu|_g^2}{2}\ d \mu_E  \\
	&\le 2^{2n+1}\kappa_n\sup_{0<\rho<\frac{k_2}{5124}-\|\pi(q)\|_\infty}\frac{1}{\kappa_n \rho^{2n+1}}\int_{\de E\cap D_\rho(\pi(q))*I} \frac{|\nu_E-\nu|_g^2}{2}\ d \mu_E \\
	&\le 2^{2n}\kappa_n M\mu(\pi(q))\le 2^{2n}\kappa_n\eta
\end{align*} 
for any $0<s<\tfrac{k_2}{10248}$, where $\kappa_n=\leb^{2n}(D_1)$ as in~\eqref{eq:leb_meas_disk}.

We consider the blow-up of $E$ at scale $d_C(p,q)$ centered at $q$, that is, $F=E_{q,d_C(p,q)}$. By Remark~\ref{remark:scaling_perim_min}, $F$ is a $(\Lambda'',r''_0)$-perimeter minimizer in $(C_{k_2})_{q,d_C(p,q)}$, with
\begin{equation*}
\Lambda''=\Lambda'\, d_C(p,q), \qquad r_0''=\frac{r_0'}{d_C(p,q)}>1. 
\end{equation*}
Now
\begin{equation*}
C_{16}\subset (C_{k_2})_{q,d_C(p,q)}, \qquad \Lambda''r_0''\le1, \qquad 0\in\de F 
\end{equation*}
and, by the scaling property of the excess and by definition of $M_0$,
\begin{equation*}
\mathbf{e}(F,0,16,\nu)=\mathbf{e}(E,q,16 d_C(p,q),\nu)\le2^{2n}\kappa_n\eta,
\end{equation*}
since we can choose $k_2>1311744$. Therefore, provided we assume
\begin{equation*}
2^{2n}\kappa_n\eta\le\eps_0(n),
\end{equation*}
by Theorem~\ref{th:H-height_bound} we have 
\begin{equation*}
\sup\big\{ |\hgt(\xi)| : \xi\in C_1\cap\de F\big\}\le C(n)\eta^{\frac{1}{2(2n+1)}},
\end{equation*} 
where $C(n)$ is a dimensional constant. In particular, choosing
\begin{equation*}
\xi=\frac{1}{d_C(p,q)}\,q^{-1}*p\in C_1\cap\de F,
\end{equation*}
we get
\begin{equation}\label{eq:est_lip_q_eta}
|\hgt(q^{-1}*p)|\le C(n)\eta^{\frac{1}{2(2n+1)}}d_C(p,q).
\end{equation}
We now set 
\begin{equation*}
L'(n,\eta)=C(n)\eta^{\frac{1}{2(2n+1)}}
\end{equation*}
and we choose $\eta$ so small that $L'(n,\eta)\le L(n)$, where $L(n)<1$ is as in~\eqref{eq:def_L(n)}. Then, by~\eqref{eq:est_lip_q_eta}, we conclude that $d_C(p,q)=\|\pi(q^{-1}*p)\|_\infty$ and we get
\begin{equation}\label{eq:ok_repres}
|\hgt(q^{-1}*p)|\le L(n)\|\pi(q^{-1}*p)\|_\infty \qquad\text{for all } p\in M\cap D_1*I,\ q\in M\cap K*I,
\end{equation} 
so $K\in\mathcal{A}(1,\tfrac{k_2}{5124})$. Thus, by~\eqref{eq:small_hgt_eta} and~\eqref{eq:ok_repres}, equality~\eqref{eq:lip_alpha_easy_1} follows.

\medskip

\textit{Step~2: proof of~\eqref{eq:lip_alpha_easy_3}}. 
We now apply Lemma~\ref{lemma:max_func_disks} with $s=\tfrac{k_2}{20496}$ and measure $\mu$ as defined in~\eqref{eq:def_mu_alpha}. By Theorem~\ref{th:lip_app_easy}, we have
\begin{align}\label{eq:check_est_mu_disk}
\mu(D_{\nicefrac{k_2}{5124}})& =2\int_{\de E\cap C_{\nicefrac{k_2}{5124}}}\frac{|\nu_E-\nu|_g^2}{2}\ d \mu_E +\int_{(\gr(\phi)\setminus\de E)\cap C_{\nicefrac{k_2}{5124}}}\frac{|\nabla^\phi\phi|^2}{1+|\nabla^\phi\phi|^2}\ d \mu_{E_\phi}  \nonumber \\
	&\le 2\left(\tfrac{k_2}{5124}\right)^{2n+1}\mathbf{e}(\tfrac{k_2}{5124})+C(n)\haush^{2n+1}\Big((\de E\bigtriangleup\gr(\phi))\cap C_{\nicefrac{k_2}{5124}}\Big)\le C'(n)\,\mathbf{e}(k_2),
\end{align}
where $C(n)$ and $C'(n)$ are dimensional constants. We now choose $\eta=\mathbf{e}(k_2)^{2\alpha}$. In order to apply Lemma~\ref{lemma:max_func_disks}, we need to check that
\begin{equation*}
\mu(D_{\nicefrac{k_2}{5124}})\le\dfrac{\eta}{5^{2n+1}}\kappa_n \left(\frac{k_2}{20496}\right)^{2n+1}.
\end{equation*}
By~\eqref{eq:check_est_mu_disk}, this follows if we assume that
\begin{equation*}
\eps_2(n,\alpha)\le\left(\frac{\kappa_n }{C'(n)}\left(\frac{k_2}{102480}\right)^{2n+1}\right)^{\frac{1}{1-2\alpha}}.
\end{equation*}
This condition on $\eps_2(n,\alpha)$ is the only one that depends also on the parameter $\alpha$. Thus, by~\eqref{eq:meas_J_theta} in Lemma~\ref{lemma:max_func_disks} and by~\eqref{eq:check_est_mu_disk}, we conclude that
\begin{align*}
\leb^{2n}(D_1\setminus K)=\leb^{2n}(J_\eta\cap D_1)&\le\dfrac{5^{2n+1}}{\eta}\,\mu\left(J_{\eta/2^{2n+1}}\cap D_{1+\frac{k_2}{102480}}\right)\\
	&\le\dfrac{5^{2n+1}}{\mathbf{e}(k_2)^{2\alpha}}\,\mu\left(D_{\nicefrac{k_2}{5124}}\right)\le 5^{2n+1}C'(n)\mathbf{e}(k_2)^{1-2\alpha},
\end{align*}
which proves~\eqref{eq:lip_alpha_easy_3}.

\medskip

\textit{Step~3: proof of~\eqref{eq:lip_alpha_easy_2}}. By Lemma~\ref{lemma:BV_est} and by~\cite{cittietal14}*{Proposition~4.4}, we have
\begin{align*}
\mu_\phi(A)^2&=\left(\int_A|\nabla^\phi\phi| \ d\leb^{2n}\right)^2\\
	&\le\sqrt{1+\|\nabla^\phi\phi\|^2_{L^\infty(D_{\nicefrac{k_2}{5124}})}}\,
\leb^{2n}(A)\int_{\gr(\phi|_A)}\frac{|\nabla^\phi\phi|^2}{1+|\nabla^\phi\phi|^2}\ d \mu_{E_\phi} \\
	&\le C(n)\,\leb^{2n}(A)\int_{\gr(\phi|_A)}\frac{|\nabla^\phi\phi|^2}{1+|\nabla^\phi\phi|^2}\ d \mu_{E_\phi}
\end{align*}
for all Borel sets $A\subset D_1$, where $C(n)$ is a dimensional constant.
Moreover, for any $x\in K$ and $8r<\tfrac{k_2}{5124}-\|x\|_\infty$,
by~\eqref{eq:comp_d_phi_graph} in Lemma~\ref{lemma:comp_d_phi_graph},
by~\eqref{eq:D_and_C_comparison}  and by~\eqref{eq:proj_cyl_in_disk}, we
have 
\begin{align*}
\int_{\Phi(U_\phi(x,r))}\frac{|\nabla^\phi\phi|^2}{1+|\nabla^\phi\phi|^2}\ d \mu_{E_\phi} &\le\int_{\Gamma\cap B_{2r}(\Phi(x))}\frac{|\nabla^\phi\phi|^2}{1+|\nabla^\phi\phi|^2}\ d\mu_{E_\phi}\\
	&\hspace*{-3cm}\le\int_{\Gamma\cap C_{4r}(\Phi(x))}\frac{|\nabla^\phi\phi|^2}{1+|\nabla^\phi\phi|^2}\ d \mu_{E_\phi}\\
	&\hspace*{-3cm}\le 2\int_{M\cap D_{8r}(x)*I}\frac{|\nu_E-\nu|_g^2}{2}\ d \mu_E +\int_{(\Gamma\setminus M)\cap D_{8r}(x)*I}\frac{|\nabla^\phi\phi|^2}{1+|\nabla^\phi\phi|^2}\ d \mu_{E_\phi} \\
	&\hspace*{-3cm}=\mu(D_{8r}(x)).
\end{align*}
Therefore, for any $x\in K$ and $8r<\tfrac{k_2}{5124}-\| x\|_\infty$, we get
\begin{equation}\label{eq:square_measure}
\mu_\phi(U_\phi(x,r))^2\le C(n)\,\leb^{2n}(U_\phi(x,r))\,\mu(D_{8r}(x)).
\end{equation}
We now apply Lemma~\ref{lemma:phi_max_func}. We choose the parameter $s>0$ in Lemma~\ref{lemma:phi_max_func} such that
\begin{equation*}
D_1\subset U_\phi(0,s) \quad \textrm{and} \quad U_\phi(0,\rho(n)s)\subset D_{k_2},
\end{equation*}
where $\rho(n)$ is the dimensional constant defined in~\eqref{eq:def_rho}. 
 Since \mbox{$\Lip_H(\phi)\le L(n)<1$},
where $L(n)$ is the dimensional constant defined in~\eqref{eq:def_L(n)},
possibly choosing $\eps_2(n,\alpha)$ smaller, we can directly assume that
$L(n)\le\ell(n)$ as in~\eqref{eq:def_ell}. In particular, the constant
$c(n,\Lip_H(\phi))$ appearing in~\eqref{eq:phi_theta_lip} of
Lemma~\ref{lemma:phi_max_func}, is controlled from above by a dimensional
constant.  Since $\sup_\W|\phi|<1$, by Lemma~\ref{lemma:U_phi_comp_disk} we can
choose $s=3$ provided that we also choose
\begin{equation*}
k_2(n)\ge 3\rho(n)+2\sqrt{3\rho(n)}.
\end{equation*}
We then have
\begin{equation*}
r_\phi(x,3)=\frac{3\rho(n)}{c_L}-d_\phi(x,0)\le 3\rho(n),
\end{equation*}
where $r_\phi(x,s)$ was defined in~\eqref{eq:def_r_phi(x,s)}.
By~\eqref{eq:square_measure} and~\eqref{eq:meas_phi_balls},  for any $x\in K$ we
have 
\begin{align*}
[\mu_\phi](x)^2&=\sup_{0<r<r_\phi(x,3)}
\dfrac{\mu_\phi(U_\phi(x,r))^2}{\leb^{2n}(U_\phi(x,r))^2}\le\,C(n)\sup_{
0<r<3\rho(n)}\dfrac{\mu(D_{8r}(x))}{\leb^{2n}(U_\phi(x,r))}\\
	&\le\dfrac{C(n)8^{2n+1}\kappa_n}{c_1^L}\sup_{0<r<3\rho(n)}\dfrac{\mu(D_{8r}(x))}{\kappa_n (8r)^{2n+1}}\\
	&\le C'(n)\sup_{0<\rho<24\rho(n)}\dfrac{\mu(D_\rho(x))}{\kappa_n \rho^{2n+1}}
\end{align*}
where $C'(n)$ is a dimensional constant. Now we can choose 
\begin{equation*}
k_2>122976\rho(n)+5124,
\end{equation*}
so that $24\rho(n)\le\tfrac{k_2}{5124}-\|x\|_\infty$ for any $x\in D_1$. Therefore, for any $x\in K$, we get
\begin{equation*}
[\mu_\phi](x)\le \sqrt{C'(n)\,\eta}=C''(n)\,\mathbf{e}(k_2)^\alpha,
\end{equation*} 
where $C''(n)$ is a positive dimensional constant. Thus $K\subset U_\phi(0,3)\setminus J^\phi_\theta$, where $J^\phi_\theta$ is as in~\eqref{eq:def_J_theta_phi} and $\theta=C''(n)\,\mathbf{e}(k_2)^\alpha$. Therefore, by~\eqref{eq:phi_theta_lip} in Lemma~\ref{lemma:phi_max_func}, we conclude that
for all $x,y\in K$ we have
\begin{equation*}
|\phi(x)-\phi(y)|\le C(n)\,\mathbf{e}(k_2)^\alpha\, d_\phi(x,y) .
\end{equation*}
This proves~\eqref{eq:lip_alpha_easy_2} and the proof of Theorem~\ref{th:lip_alpha_easy} is complete.
\end{proof}
 
Theorem~\ref{th:lip_alpha_easy} leads to the following result, which contains
Theorem~\ref{th_intro:alpha_app} in the Introduction as a particular case.

\begin{corollary}\label{coroll:alpha_app_easy}
Let $n\geq 2$ and $\alpha\in(0,\frac{1}{2})$. There exist positive constants $C_3(n)$, $\eps_3(\alpha,n)$ and $k_3=k_3(n)$ with the following property. 
 For any set $E\subset\He^n$ that is a $(\Lambda,r_0)$-minimizer of
$H$-perimeter in $C_{k_3}$ with $\mathbf{e}(k_3)\le\eps_3(\alpha,n)$, $\Lambda
r_0\le1$, $r_0>k_3$ and $ 0\in \partial E$, 
there exist a set $K\subset D_1$ and an intrinsic Lipschitz function $\phi\colon\W\to\R$ such that:
\begin{equation*}
\leb^{2n}(D_1\setminus K)\le C_3(n)\,\mathbf{e}(k_3)^{1-2\alpha},
\end{equation*}
\begin{equation}\label{eq:alpha_app_easy_2}
\gr(\phi|_K)=\de E\cap K*(-1,1),
\end{equation}
\begin{equation}\label{eq:alpha_app_easy_3}
\haush^{2n+1}\big((\de E\bigtriangleup\gr(\phi))\cap C_1\big)\le C_3(n)\,\mathbf{e}(k_3)^{1-2\alpha},
\end{equation}
\begin{equation*}
\Lip_H(\phi)\le C_3(n)\,\mathbf{e}(k_3)^\alpha,
\end{equation*}
\begin{equation}\label{eq:alpha_app_easy_5}
\int_{D_1}|\nabla^\phi\phi|^2\ d\leb^{2n}\le C_3(n)\,\mathbf{e}(k_3).
\end{equation}
\end{corollary}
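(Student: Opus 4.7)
The plan is to obtain the corollary from Theorem~\ref{th:lip_alpha_easy} and Theorem~\ref{th:lip_app_easy} through a simple extension argument followed by two easy splittings between $K$ and $D_1\setminus K$.

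First, apply Theorem~\ref{th:lip_alpha_easy} with the prescribed $\alpha$, obtaining a set $K\subset D_1$ and a function $\phi_0\colon\W\to\R$ that is intrinsic Lipschitz on $K$ with constant $\le C_2(n)\,\mathbf{e}(k_2)^\alpha$ and with $\gr(\phi_0|_K)=\de E\cap K*(-1,1)$. Choose $\eps_3(\alpha,n)$ so small that $C_2(n)\,\mathbf{e}(k_2)^\alpha\le\tfrac{1}{100}$, say; then by Proposition~\ref{prop:intrinsic_lip_ext}, applied to $\phi_0|_K$, we extend $\phi_0|_K$ to an intrinsic Lipschitz function $\phi\colon\W\to\R$ with $\Lip_H(\phi)\le 2C_2(n)\,\mathbf{e}(k_3)^\alpha$ and $\sup_\W|\phi|=\sup_K|\phi_0|$. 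In particular $\sup_\W|\phi|<1$ for $\eps_3$ small, so $\gr(\phi)\cap C_1=\gr(\phi|_{D_1})$. Since $\phi=\phi_0$ on $K$, equality~\eqref{eq:alpha_app_easy_2} follows, and the measure bound on $D_1\setminus K$ is inherited directly from Theorem~\ref{th:lip_alpha_easy}.

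To prove~\eqref{eq:alpha_app_easy_3}, run Theorem~\ref{th:lip_app_easy} simultaneously (this is done internally in the proof of Theorem~\ref{th:lip_alpha_easy}); it yields a $1$-intrinsic Lipschitz function $\psi\colon\W\to\R$, with $\Gamma_\psi=\gr(\psi|_{D_1})$, satisfying $\haush^{2n+1}((\de E\cap C_1)\bigtriangleup\Gamma_\psi)\le C_1(n)\,\mathbf{e}(k_3)$. Both $\Gamma_\psi$ and $\gr(\phi|_{D_1})$ agree with $\de E\cap K*(-1,1)$ over $K$, hence
\begin{equation*}
\gr(\phi|_{D_1})\bigtriangleup\Gamma_\psi\subset\gr(\phi|_{D_1\setminus K})\cup\gr(\psi|_{D_1\setminus K}),
\end{equation*}
and the area formula~\eqref{eq:area_formula}, together with the a priori bounds $|\nabla^\phi\phi|,|\nabla^\psi\psi|\le C(n)$ from~\cite{cittietal14}*{Proposition~4.4}, yields
\begin{equation*}
\haush^{2n+1}\bigl(\gr(\phi|_{D_1})\bigtriangleup\Gamma_\psi\bigr)\le C(n)\,\leb^{2n}(D_1\setminus K)\le C'(n)\,\mathbf{e}(k_3)^{1-2\alpha}.
\end{equation*}
Since $\mathbf{e}(k_3)\le\mathbf{e}(k_3)^{1-2\alpha}$ for $\eps_3$ small, a triangle inequality for symmetric differences yields~\eqref{eq:alpha_app_easy_3}.

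Finally, for~\eqref{eq:alpha_app_easy_5}, split $D_1=K\cup(D_1\setminus K)$. On $D_1\setminus K$, use $|\nabla^\phi\phi|\le C(n)\,\mathbf{e}(k_3)^\alpha$ from~\cite{cittietal14}*{Proposition~4.4} together with the bound on $\leb^{2n}(D_1\setminus K)$, which gives
\begin{equation*}
\int_{D_1\setminus K}|\nabla^\phi\phi|^2\,d\leb^{2n}\le C(n)\,\mathbf{e}(k_3)^{2\alpha}\cdot C_3(n)\,\mathbf{e}(k_3)^{1-2\alpha}\le C'(n)\,\mathbf{e}(k_3).
\end{equation*}
On $K$, note that $\phi=\phi_0$ a.e.\ on $K$ and $\gr(\phi|_K)\subset\de E$, so the argument of Step~3 of Theorem~\ref{th:lip_app_easy} applies verbatim to $\phi|_K$: using~\eqref{eq:L2_1}, \eqref{eq:L2_1.5} and the general area formula~\eqref{eq:general_area_formula}, one controls $\int_K|\nabla^\phi\phi|^2 d\leb^{2n}$ by a constant times $\mathbf{e}(k_3)$. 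Summing the two contributions completes~\eqref{eq:alpha_app_easy_5}.

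The only delicate point is the symmetric-difference estimate~\eqref{eq:alpha_app_easy_3}, where one must take care that both competing graphs coincide with $\de E$ over $K$ so that the difference is entirely pushed into the small exceptional set $D_1\setminus K$; everything else is routine bookkeeping.
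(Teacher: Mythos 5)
Your proposal is correct, and for the construction (extension of $\phi_0|_K$ via Proposition~\ref{prop:intrinsic_lip_ext}, smallness of $\sup_\W|\phi|$ and of $\Lip_H(\phi)$, inheritance of \eqref{eq:alpha_app_easy_2} and of the bound on $\leb^{2n}(D_1\setminus K)$) and for the $L^2$-estimate \eqref{eq:alpha_app_easy_5} (split $\int_K+\int_{D_1\setminus K}$, Step~3 of Theorem~\ref{th:lip_app_easy} on $K$, $\Lip_H(\phi)^2\cdot\leb^{2n}(D_1\setminus K)$ off $K$) it coincides with the paper's argument. The genuine divergence is in \eqref{eq:alpha_app_easy_3}. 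The paper notes that by \eqref{eq:alpha_app_easy_2} the symmetric difference lives over $J*I$ with $J=D_1\setminus K$, and bounds $\haush^{2n+1}(\de E\cap J*I)$ \emph{directly} through the projection identity~(3.56) of \cite{monti&vittone15}*{Lemma~3.4}, obtaining $\haush^{2n+1}(\de E\cap J*I)\le \delta(n)^{-1}\mathbf{e}(1)+\leb^{2n}(J)$, plus the area formula for $\gr(\phi)$ over $J$; you instead interpolate through the graph $\Gamma_\psi$ of the Theorem~\ref{th:lip_app_easy} approximation and use the already established estimate $\haush^{2n+1}\big((\de E\cap C_1)\bigtriangleup\Gamma_\psi\big)\le C(n)\,\mathbf{e}(k_3)$ together with the area formula over $D_1\setminus K$. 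This is valid, but two points deserve to be made explicit: (i) the statement of Theorem~\ref{th:lip_alpha_easy} alone does not give you that $\Gamma_\psi$ agrees with $\gr(\phi)$ (equivalently with $\de E$) over $K$; you need the fact, internal to its proof, that the function it produces is a $(1,\tfrac{k_2}{5124})$-representative of the Theorem~\ref{th:lip_app_easy} approximation, i.e.\ your $\psi$ \emph{is} your $\phi_0$, so that $\gr(\psi|_K)=\gr(\phi|_K)=\de E\cap K*(-1,1)$ — using proof-internal information is legitimate (the paper itself invokes $\Lip_H(\phi)<1$ and $\sup_\W|\phi|<1$ ``by construction''), but it should be stated; (ii) the bound on $(\de E\cap C_1)\bigtriangleup\Gamma_\psi$ comes from Theorem~\ref{th:lip_app_easy} applied with scaling factor $k_2/5124$, hence at the scale of $C_{k_2/5124}$ and $D_{k_2/5124}$, and one must restrict to $C_1$ (harmless, since the smaller symmetric difference is contained in the larger one) and absorb the factor $(k_2/5124)^{2n+1}$ into the dimensional constant. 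In exchange for this extra bookkeeping, your route avoids re-invoking the slicing formula~(3.56) at the level of the corollary, while the paper's route is self-contained given the statement of Theorem~\ref{th:lip_alpha_easy} plus that formula; both give \eqref{eq:alpha_app_easy_3} with the stated exponent since $\mathbf{e}(k_3)\le\mathbf{e}(k_3)^{1-2\alpha}$ once $\eps_3\le1$.
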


\begin{proof}
Let $\alpha\in(0,\frac{1}{2})$ be fixed and assume that $\eps_3(n,\alpha)\le\eps_2(n,\alpha)$ and $k_3=k_2$. Let $K$ and $\phi$ be as in Theorem~\ref{th:lip_alpha_easy}. Recall that, by construction, $\Lip_H(\phi)<1$ and $\sup_\W|\phi|<1$. Moreover, by~\eqref{eq:lip_alpha_easy_2}, we have
\begin{equation*}
\Lip_H(\phi|_K)\le C_2(n)\,\mathbf{e}(k_2)^\alpha.
\end{equation*}
Thus, according to Proposition~\ref{prop:intrinsic_lip_ext}, choosing $\eps_3(n,\alpha)\le\eps_2(n,\alpha)$ sufficiently small, we can extend $\phi$ outside $K$ to the whole $\W$ in such a way that $\sup_\W|\phi|<1$ and
\begin{equation*}
\Lip_H(\phi)\le C(n)\,\mathbf{e}(k_3)^\alpha,
\end{equation*}
where $C(n)$ is a dimensional constant. Thus we only need to prove~\eqref{eq:alpha_app_easy_3} and~\eqref{eq:alpha_app_easy_5}.

We prove~\eqref{eq:alpha_app_easy_3}. Let $J=D_1\setminus K$, $I=(-1,1)$, and note that, by~\eqref{eq:alpha_app_easy_2}, we have
\begin{align*}
\haush^{2n+1}\big((\de E\bigtriangleup\gr(\phi))\cap C_1\big)& 
=\haush^{2n+1}\big((\de E\setminus\gr(\phi))\cap
J*I\big)
\\
&
\qquad 
\qquad 
+\haush^{2n+1}\big((\gr(\phi)\setminus\de E)\cap J*I\big)
\\
	&
\le\haush^{2n+1}(\de E\cap J*I)+\haush^{2n+1}(\gr(\phi)\cap
J*I).
\end{align*}
On the one hand, by definition of excess and by~(3.56)
in~\cite{monti&vittone15}*{Lemma~3.4}, 
we have
\begin{align}\label{eq:alpha_3.1}
\haush^{2n+1}(\de E\cap J*I)
&=\int_{\de E\cap J*I}1+\langle \nu_E,X_1\rangle_g\ d\haush^{2n+1}-\int_{\de
E\cap J*I}\langle \nu_E,X_1\rangle_g\ d\haush^{2n+1}
=\nonumber\\
	&=\delta(n)^{-1}\int_{\de E\cap J*I}\frac{|\nu_E-\nu|_g^2}{2}\ d \mu_E
+\leb^{2n}(J)\nonumber
\\
	&\le\delta(n)^{-1}\mathbf{e}(1)+\leb^{2n}(J),
\end{align}
thus, by the scaling property of the excess and by~\eqref{eq:lip_alpha_easy_3}, we can estimate
\begin{equation}\label{eq:alpha_3.2}
\haush^{2n+1}(\de E\cap J*I)\le \delta(n)^{-1}\,k_3^{2n+1}\,\mathbf{e}(k_3)+C_2(n)\,\mathbf{e}(k_3)^{1-2\alpha}\le C(n)\,\mathbf{e}(k_3)^{1-2\alpha},
\end{equation}
where $C(n)$ is a dimensional constant. On the other hand, by the area
formula~\eqref{eq:area_formula}, we have
\begin{align}\label{eq:alpha_3.3}
\haush^{2n+1}(\gr(\phi)\cap J*I)&=\delta(n)^{-1}\int_J\sqrt{1+|\nabla^\phi\phi|^2}\ d\leb^{2n}\nonumber\\
	&\le\delta(n)^{-1}\,\sqrt{1+\|\nabla^\phi\phi\|_{L^\infty(D_1)}^2}\,\leb^{2n}(J),
\end{align} 
and thus, by~\cite{cittietal14}*{Proposition~4.4} and again by~\eqref{eq:lip_alpha_easy_3}, we can estimate
\begin{equation*}
\haush^{2n+1}(\gr(\phi)\cap J*I)\le C(n)\,\mathbf{e}(k_3)^{1-2\alpha},
\end{equation*}
where $C(n)$ is a dimensional constant. Combining~\eqref{eq:alpha_3.1} with~\eqref{eq:alpha_3.2} and~\eqref{eq:alpha_3.3}, we prove~\eqref{eq:alpha_app_easy_3}.

Finally, we prove~\eqref{eq:alpha_app_easy_5}. Since $D_1=K\cup J$ with disjoint union, we can split
\begin{equation}\label{eq:alpha_5.1}
\int_{D_1}|\nabla^\phi\phi|^2\ d\leb^{2n}=\int_K|\nabla^\phi\phi|^2\
d\leb^{2n}+\int_J|\nabla^\phi\phi|^2\ d\leb^{2n}.
\end{equation}
On the one hand, by~\cite{cittietal14}*{Proposition~4.4} and
by~\eqref{eq:lip_alpha_easy_1}, we have
\begin{align}
\label{eq:alpha_5.2}
\int_K|\nabla^\phi\phi|^2\ d\leb^{2n}
&=\int_{\gr(\phi|_K)}
\frac{|\nabla^\phi\phi|^2}{\sqrt{1+|\nabla^\phi\phi|^2}}\ d
\mu_{E_\phi}
\nonumber
\\
	&\le\sqrt{1+\|\nabla^\phi\phi\|_{L^\infty(D_1)}^2}\int_{\gr(\phi|_K)}
\frac{|\nabla^\phi\phi|^2}{1+|\nabla^\phi\phi|^2}\ d \mu_{E_\phi} 
\nonumber
\\
	&\le C(n)\int_{M\cap K*I}\frac{|\nu_E-\nu|_g^2}{2} \ d \mu_E 
\le
C(n)\,\mathbf{e}(1)\le C'(n)\,\mathbf{e}(k_3),
\end{align}
where $C(n)$ and $C'(n)$ are dimensional constants. On the other hand, again by~\cite{cittietal14}*{Proposition~4.4} and by~\eqref{eq:lip_alpha_easy_2}, we have
\begin{align}\label{eq:alpha_5.3}
\int_J|\nabla^\phi\phi|^2\ d\leb^{2n}&\le\|\nabla^\phi\phi\|_{L^\infty(D_1)}^2\,\leb^{2n}(J)\nonumber\\
	&\le C(n)\Lip_H(\phi)^2\leb^{2n}(J)\le C'(n)\,\mathbf{e}(k_3).
\end{align}
Combining~\eqref{eq:alpha_5.1} with~\eqref{eq:alpha_5.2} and~\eqref{eq:alpha_5.3}, we prove~\eqref{eq:alpha_app_easy_5}.
\end{proof}


\begin{bibdiv}
\begin{biblist}

\bib{almgren00}{book}{
   author={Almgren, Frederick J., Jr.},
   title={Almgren's big regularity paper},
   series={World Scientific Monograph Series in Mathematics},
   volume={1},
   publisher={World Scientific Publishing Co., Inc., River Edge, NJ},
   date={2000},
   pages={xvi+955}
}
 
\bib{ambrosio15}{article}{
  author={Ambrosio, Luigi},
  title={Regularity theory for mass-minimizing currents (after Almgren-De Lellis-Spadaro)},
  journal={Astérisque},
  year = {2016},
  number={380},
  pages = {139--169} 
}

\bib{ambrosioetal15}{article}{
   author={Ambrosio, Luigi},   
   author={De Lellis, Camillo},
   author={Schmidt, Thomas},
   title={Partial regularity for area-minimizing currents in Hilbert spaces},
   journal={J. Reine Angew. Math.},
   status={to appear},
   eprint={http://cvgmt.sns.it/paper/2112/} 
}

\bib{ambrosio&scienza10}{article}{
   author={Ambrosio, Luigi},
   author={Scienza, Matteo},
   title={Locality of the perimeter in Carnot groups and chain rule},
   journal={Ann. Mat. Pura Appl. (4)},
   volume={189},
   date={2010},
   number={4},
   pages={661--678}
}

\bib{capognaetal09}{article}{
   author={Capogna, Luca},
   author={Citti, Giovanna},
   author={Manfredini, Maria},
   title={Regularity of non-characteristic minimal graphs in the Heisenberg group $\mathbb{H}^1$},
   journal={Indiana Univ. Math. J.},
   volume={58},
   date={2009},
   number={5},
   pages={2115--2160}
}

\bib{capognaetal10}{article}{
   author={Capogna, Luca},
   author={Citti, Giovanna},
   author={Manfredini, Maria},
   title={Smoothness of Lipschitz minimal intrinsic graphs in Heisenberg groups $\mathbb{H}^n$, $n>1$},
   journal={J. Reine Angew. Math.},
   volume={648},
   date={2010},
   pages={75--110}
}

\bib{cheng&hwang09}{article}{
   author={Cheng, Jih-Hsin},
   author={Hwang, Jenn-Fang},
   author={Yang, Paul},
   title={Regularity of $C^1$ smooth surfaces with prescribed $p$-mean curvature in the Heisenberg group},
   journal={Math. Ann.},
   volume={344},
   date={2009},
   number={1},
   pages={1--35}
}

\bib{cittietal14}{article}{
   author={Citti, Giovanna},
   author={Manfredini, Maria},
   author={Pinamonti, Andrea},
   author={Serra Cassano, Francesco},
   title={Smooth approximation for intrinsic Lipschitz functions in the Heisenberg group},
   journal={Calc. Var. Partial Differential Equations},
   volume={49},
   date={2014},
   number={3-4},
   pages={1279--1308}
}

\bib{cittietal16}{article}{
   author={Citti, Giovanna},
   author={Manfredini, Maria},
   author={Pinamonti, Andrea},
   author={Serra Cassano, Francesco},
   title={Poincar\'e-type inequality for Lipschitz continuous vector fields},
   journal={J. Math. Pures Appl. (9)},
   volume={105},
   date={2016},
   number={3},
   pages={265--292}
}

\bib{daniellietal10}{article}{
   author={Danielli, D.},
   author={Garofalo, N.},
   author={Nhieu, D. M.},
   title={Sub-Riemannian calculus and monotonicity of the perimeter for graphical strips},
   journal={Math. Z.},
   volume={265},
   date={2010},
   number={3},
   pages={617--637}
}

\bib{delellis&spadaro11-1}{article}{
   author={De Lellis, Camillo},
   author={Spadaro, Emanuele},
   title={Center manifold: a case study},
   journal={Discrete Contin. Dyn. Syst.},
   volume={31},
   date={2011},
   number={4},
   pages={1249--1272}
}

\bib{franchietal01}{article}{
   author={Franchi, Bruno},
   author={Serapioni, Raul},
   author={Serra Cassano, Francesco},
   title={Rectifiability and perimeter in the Heisenberg group},
   journal={Math. Ann.},
   volume={321},
   date={2001},
   number={3},
   pages={479--531}
}

\bib{franchietal06}{article}{
   author={Franchi, Bruno},
   author={Serapioni, Raul},
   author={Serra Cassano, Francesco},
   title={Intrinsic Lipschitz graphs in Heisenberg groups},
   journal={J. Nonlinear Convex Anal.},
   volume={7},
   date={2006},
   number={3},
   pages={423--441}
}

\bib{franchietal07}{article}{
   author={Franchi, Bruno},
   author={Serapioni, Raul},
   author={Serra Cassano, Francesco},
   title={Regular submanifolds, graphs and area formula in Heisenberg groups},
   journal={Adv. Math.},
   volume={211},
   date={2007},
   number={1},
   pages={152--203}
}

\bib{franchietal11}{article}{
   author={Franchi, Bruno},
   author={Serapioni, Raul},
   author={Serra Cassano, Francesco},
   title={Differentiability of intrinsic Lipschitz functions within Heisenberg groups},
   journal={J. Geom. Anal.},
   volume={21},
   date={2011},
   number={4},
   pages={1044--1084}
}

\bib{kirchheim&serracassano04}{article}{
   author={Kirchheim, Bernd},
   author={Serra Cassano, Francesco},
   title={Rectifiability and parameterization of intrinsic regular surfaces in the Heisenberg group},
   journal={Ann. Sc. Norm. Super. Pisa Cl. Sci. (5)},
   volume={3},
   date={2004},
   number={4},
   pages={871--896}
}

\bib{maggi12}{book}{
   author={Maggi, Francesco},
   title={Sets of finite perimeter and geometric variational problems},
   series={Cambridge Studies in Advanced Mathematics},
   volume={135},
   note={An introduction to geometric measure theory},
   publisher={Cambridge University Press, Cambridge},
   date={2012},
   pages={xx+454}
}
 
\bib{magnani14}{article}{
  author={Magnani, Valentino},
  title={A new differentiation, shape of the unit ball and perimeter measure},
  journal={Indiana Univ. Math. J.},
  year={2016},
  status={to appear},
  eprint={http://cvgmt.sns.it/paper/2519/} 
}

\bib{monti14}{article}{
   author={Monti, Roberto},
   title={Lipschitz approximation of $\mathbb{H}$-perimeter minimizing boundaries},
   journal={Calc. Var. Partial Differential Equations},
   volume={50},
   date={2014},
   number={1-2},
   pages={171--198}
}

\bib{monti15}{article}{
   author={Monti, Roberto},
   title={Minimal surfaces and harmonic functions in the Heisenberg group},
   journal={Nonlinear Anal.},
   volume={126},
   date={2015},
   pages={378--393}
}

\bib{monti&vittone15}{article}{
   author={Monti, Roberto},
   author={Vittone, Davide},
   title={Height estimate and slicing formulas in the Heisenberg group},
   journal={Anal. PDE},
   volume={8},
   date={2015},
   number={6},
   pages={1421--1454}
}

\bib{pauls06}{article}{
   author={Pauls, Scott D.},
   title={$H$-minimal graphs of low regularity in $\mathbb{H}^1$},
   journal={Comment. Math. Helv.},
   volume={81},
   date={2006},
   number={2},
   pages={337--381}
}

\bib{ritore09}{article}{
   author={Ritor{\'e}, Manuel},
   title={Examples of area-minimizing surfaces in the sub-Riemannian Heisenberg group $\mathbb{H}^1$ with low regularity},
   journal={Calc. Var. Partial Differential Equations},
   volume={34},
   date={2009},
   number={2},
   pages={179--192}
}

\bib{schoen&simon82}{article}{
   author={Schoen, Richard},
   author={Simon, Leon},
   title={A new proof of the regularity theorem for rectifiable currents which minimize parametric elliptic functionals},
   journal={Indiana Univ. Math. J.},
   volume={31},
   date={1982},
   number={3},
   pages={415--434}
}

\bib{serracassano&vittone14}{article}{
   author={Serra Cassano, Francesco},
   author={Vittone, Davide},
   title={Graphs of bounded variation, existence and local boundedness of non-parametric minimal surfaces in Heisenberg groups},
   journal={Adv. Calc. Var.},
   volume={7},
   date={2014},
   number={4},
   pages={409--492}
}

\end{biblist}
\end{bibdiv}

\bigskip

\end{document}